\numberwithin{equation}{section}
\numberwithin{figure}{section}
\theoremstyle{plain}
\newtheorem{thm}{\protect\theoremname}[section]
\theoremstyle{definition}
\newtheorem{defn}[thm]{\protect\definitionname}
\theoremstyle{plain}
\newtheorem{cor}[thm]{\protect\corollaryname}
\theoremstyle{remark}
\newtheorem{rem}[thm]{\protect\remarkname}
\theoremstyle{plain}
\newtheorem{lem}[thm]{\protect\lemmaname}
\theoremstyle{plain}
\newtheorem{prop}[thm]{\protect\propositionname}
\theoremstyle{remark}
\newtheorem*{claim*}{\protect\claimname}
\theoremstyle{definition}
\newtheorem{example}[thm]{\protect\examplename}
\providecommand{\claimname}{Claim}
\providecommand{\corollaryname}{Corollary}
\providecommand{\definitionname}{Definition}
\providecommand{\examplename}{Example}
\providecommand{\lemmaname}{Lemma}
\providecommand{\propositionname}{Proposition}
\providecommand{\remarkname}{Remark}
\providecommand{\theoremname}{Theorem}
\begin{document}
\global\long\def\converge{\underset{n\to\infty}{\longrightarrow}}%

\global\long\def\supp{\text{\textnormal{supp}}}%

\global\long\def\limn{\lim\limits _{n\to\infty}}%

\global\long\def\dimh{\dim_{\text{\textnormal{H}}}}%

\global\long\def\diml{\dim_{\text{\textnormal{L}}}}%

\global\long\def\dima{\dim_{\text{\textnormal{A}}}}%

\global\long\def\rd{\mathbb{R}^{d}}%

\global\long\def\N{\mathbb{N}}%

\global\long\def\Z{\mathbb{Z}}%

\title[microsets of random fractals and Furstenberg's homogeneity]{On microsets, Assouad dimension and lower dimension of random fractals,
and Furstenberg's homogeneity}
\author{Yiftach Dayan}
\begin{abstract}
We study the collection of microsets of randomly constructed fractals,
which in this paper, are referred to as Galton-Watson fractals. This
is a model that generalizes Mandelbrot percolation, where Galton-Watson
trees (whose offspring distribution is not necessarily binomial) are
projected to $\rd$ by a coding map which arises from an iterated
function system (IFS) of similarity maps. We show that for such a
random fractal $E$, whenever the underlying IFS satisfies the open
set condition, almost surely the Assouad dimension of $E$ is the
maximal Hausdorff dimension of a set in $\supp\left(E\right)$, the
lower dimension is the smallest Hausdorff dimension of a set in $\supp\left(E\right)$,
and every value in between is the Hausdorff dimension of some microset
of $E$.

In order to obtain the above, we first analyze the relation between
the collection of microsets of a (deterministic) set, and certain
limits of subtrees of an appropriate coding tree for that set.

The results of this analysis are also applied, with the required adjustments,
to gain some insights on Furstenberg's homogeneity property. Corresponding
to a discussion in \cite{KaenmakiRossi2016}, we define a weaker property
than homogeneity and show that for self-homothetic sets in $\mathbb{R}$
whose Hausdorff dimension is smaller than 1, it is equivalent to the
weak separation condition.
\end{abstract}

\maketitle

\section{Introduction\label{sec:Introduction}}

\subsection{Background}

The notion of a microset was defined by Furstenberg in his paper \cite{Furstenberg2008405}.
In essence, microsets are limits of repeated local magnifications
of a given set. Since the paper \cite{Furstenberg2008405} was published,
several authors used different variations of Furstenberg's definition.
In this paper we use the following:
\begin{defn}
\label{def:microsets}Given a compact set $F\subseteq\mathbb{R}^{d}$,
\begin{enumerate}
\item A non-empty set $A\subset Q$ is called a \emph{miniset} of $F$ if
$A=Q\cap\varphi\left(F\right)$, for some expanding similarity map
$\varphi:\mathbb{R}^{d}\to\mathbb{R}^{d}$.
\item A \emph{microset }of $F$ is any limit of a sequence of minisets of
$F$ in the Hausdorff metric. %
\end{enumerate}
\end{defn}

In the definition above and for the rest of the paper $Q=\left[0,1\right]^{d}$
denotes the closed unit cube in $\mathbb{R}^{d}$, and a \emph{similarity
map} is a function $\varphi:\mathbb{R}^{d}\to\mathbb{R}^{d}$ for
which there exists some $r>0$, which is called the scaling ratio
of $\varphi$, s.t. for every $x,y\in\rd$, $\left\Vert \varphi\left(x\right)-\varphi\left(y\right)\right\Vert =r\cdot\left\Vert x-y\right\Vert $.
In case $r<1$ we call $\varphi$ a \emph{contracting similarity}
and if $r\geq1$ $\varphi$ is called an \emph{expanding similarity}.
Throughout the paper, $\left\Vert \cdot\right\Vert $ denotes the
euclidean norm in $\mathbb{R}^{d}$.

It should be noted that microsets, as defined here, are also called
weak tangents by several authors, a notion which has several variations
as well. 

Recently, there has been an increasing interest in understanding the
collection of microsets of certain sets in $\rd$, and in particular,
their Hausdorff dimensions. It was shown that certain properties of
the collection of microsets of a given compact set have interesting
implications regarding various properties of the set. 

In \cite{Furstenberg2008405}, Furstenberg considered the process
of repeated magnification of a set as a dynamical system called a
CP-process, and used methods from ergodic theory to get results regarding
the Hausdorff dimension of the image of the set under linear transformations,
whenever the collection of microsets satisfies certain conditions
(see section \ref{sec:Fursternberg} for more details). As another
example, certain properties of the collection of microsets of a given
compact set provide interesting information regarding the intersection
of the set with winning sets for a variation of Schmidt's game called
the hyperplane absolute game (\cite{Broderick2012319}, \cite{Das2016}). 

It is also known that the Hausdorff dimension of microsets is linked
to two different notions of dimension - the \emph{Assouad dimension}
and the \emph{lower dimension} (denoted by $\dima$ and $\diml$).
These provide a coarse and localized information regarding the density
of a set in asymptotically small scales. The Assouad dimension describes
the densest part of the set while the lower dimension describes the
sparsest part of the set. These notions are also gaining popularity
recently, and were studied in several recent papers.

Throughout this paper we assume some basic familiarity with Hausdorff
dimension (denoted by $\dimh)$ and its basic properties. For an introduction
the reader is referred to \cite{falconer2013fractal}.

The relation between the Hausdorff dimension of microsets and the
Assouad and lower dimensions is given in the theorem below. Since
for the lower dimension it is important to only consider microsets
which intersect the interior of $Q$ (this has no influence for the
Assouad dimension), we denote the collection of those microsets of
a given set $F\subseteq\rd$ by $\mathcal{W}_{F}$. 
\begin{thm}
\label{thm:Assouad as max}Given a non-empty compact set $F\subseteq\mathbb{R}^{d}$,
the following hold:
\begin{enumerate}
\item $\dima\left(F\right)=\max\left\{ \dimh\left(A\right):\,A\in\mathcal{W}_{F}\right\} $
\item $\diml\left(F\right)=\min\left\{ \dimh\left(A\right):\,A\in\mathcal{W}_{F}\right\} $
\end{enumerate}
\end{thm}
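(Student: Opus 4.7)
Write $s_A=\dima(F)$ and $s_L=\diml(F)$. The plan is to reduce both equalities to a common template: an easy inequality bounding $\dimh(A)$ for every $A\in\mathcal{W}_F$ by universal monotonicity properties, and a constructive inequality in which a microset attaining the extreme Hausdorff dimension is produced as a Hausdorff limit of rescaled minisets along scales where the definitions of $\dima$ or $\diml$ are nearly tight.

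For the easy directions, I would first verify that $\dima$ does not increase under passing to microsets: if $Q\cap\varphi_n(F)\to A$ in the Hausdorff metric with $\varphi_n$ expanding similarities, then any pair of scales at which $A$ has large local covering number can be pulled back, via $\varphi_n^{-1}$ for $n$ large, to a pair of scales in $F$ with essentially the same covering number. This yields $\dima(A)\leq \dima(F)$, and combined with the universal bound $\dimh\leq\dima$, gives $\dimh(A)\leq s_A$ for every $A\in\mathcal{W}_F$. The dual statement $\diml(A)\geq\diml(F)$ holds whenever $A$ intersects $\mathrm{int}(Q)$ (the interior condition rules out degenerate sparseness caused by mass escaping to the boundary), and then $\dimh(A)\geq\diml(A)\geq s_L$.

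For the hard direction of (1), for each $n$ the definition of $\dima$ supplies $x_n\in F$ and scales $0<r_n\leq R_n$ with $r_n/R_n\to 0$ such that $B(x_n,R_n)\cap F$ contains an $r_n$-separated set $S_n$ of cardinality at least $(R_n/r_n)^{s_A-1/n}$. Let $\varphi_n$ be an expanding similarity of ratio of order $1/R_n$ that sends $x_n$ to the centre of $Q$, so that $\varphi_n(B(x_n,R_n))\subseteq Q$. Pass to a subsequence along which $Q\cap\varphi_n(F)\to A$ in the Hausdorff metric and the uniform probability measures $\mu_n$ on $\varphi_n(S_n)$ converge weakly to a probability measure $\mu$ supported on $A$. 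A standard packing argument shows that for each $t<s_A$ there is a constant $C_t$ with
\[
\mu_n(B(y,\rho))\leq C_t\,\rho^t \quad\text{for all } y\in\rd,\ \rho\geq r_n/R_n,
\]
once $s_A-1/n>t$. For fixed $\rho>0$ this estimate eventually holds for $\mu_n$ and is preserved by weak-$*$ convergence, so $\mu(B(y,\rho))\leq C_t\,\rho^t$ for all $y,\rho$. The mass distribution principle gives $\dimh(A)\geq t$, and letting $t\uparrow s_A$ along a countable sequence yields $\dimh(A)\geq s_A$. For the hard direction of (2) the analogous construction uses the definition of $\diml$: choose $x_n\in F$ and scales with $r_n/R_n\to 0$ such that $B(x_n,R_n)\cap F$ is covered by at most $(R_n/r_n)^{s_L+1/n}$ balls of radius $r_n$. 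Rescaling and extracting a Hausdorff-convergent subsequence yields $A\in\mathcal{W}_F$ meeting $\mathrm{int}(Q)$ (since $x_n$ is sent to the centre of $Q$), which inherits the covering bounds: for every $\epsilon>0$ and every sufficiently small $\rho>0$, $A$ is covered by $O(\rho^{-(s_L+\epsilon)})$ balls of radius $\rho$. Consequently $\mathcal{H}^t(A)=0$ for every $t>s_L$, so $\dimh(A)\leq s_L$.

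The main obstacle is the hard direction of (1): converting a lower bound on $r_n$-separated sets inside a ball of $F$ into a genuine Frostman measure on the Hausdorff limit of the rescaled minisets. Two delicate points must be handled: obtaining a Frostman estimate for $\mu_n$ that is uniform in $n$ on the relevant scales, and arranging the diagonalisation so that a \emph{single} microset $A$ serves simultaneously for every $t<s_A$, rather than producing a sequence of microsets whose Hausdorff dimensions approach $s_A$ with no common limit. The corresponding step in (2) is easier, since upper bounds on covering numbers are automatically stable under Hausdorff limits, and the only care required is ensuring that the limit microset meets $\mathrm{int}(Q)$.
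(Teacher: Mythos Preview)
The paper does not prove this theorem; it quotes part (1) from K\"aenm\"aki--Ojala--Rossi and part (2) from Fraser, so there is no in-paper argument to compare against. Assessing your sketch on its own merits:

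Your easy directions are fine. The hard direction of (1), however, has a genuine gap precisely at the step you flag as the main obstacle. The claimed uniform Frostman bound $\mu_n(B(y,\rho))\le C_t\,\rho^t$ does \emph{not} follow from a packing argument. A volume bound on an $(r_n/R_n)$-separated set gives only $\mu_n(B(y,\rho))\le C_d\,\rho^{d}(R_n/r_n)^{\,d-s_A+1/n}$, and using the Assouad upper bound at some exponent $\alpha>s_A$ gives $\mu_n(B(y,\rho))\le C_\alpha\,\rho^{\alpha}(R_n/r_n)^{\,\alpha-s_A+1/n}$; both constants blow up as $n\to\infty$. Concretely, nothing rules out that all of $S_n$ is concentrated in a sub-ball of intermediate rescaled radius $\rho_0\sim(r_n/R_n)^{\varepsilon}$, in which case $\mu_n(B(\cdot,\rho_0))=1$ while $\rho_0^{t}\to 0$. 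The published proofs get around this with a genuinely multi-scale construction (an iterated Cantor-type selection as in Bishop--Peres, or Furstenberg's CP-process argument), not a single scale pair $(r_n,R_n)$ per $n$; your diagonalisation over $t$ does not substitute for this.

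The hard direction of (2) has the dual gap, contrary to your claim that it is easier. Upper bounds on covering numbers are stable under Hausdorff limits only at a \emph{fixed} scale: from $A_n\to A$ and $N_\rho(A_n)\le M$ one deduces $N_{2\rho}(A)\le M$. But your bound on $A_n$ lives at the single moving scale $r_n/R_n$, and you have no control on whether $d_H(A_n,A)\lesssim r_n/R_n$ along any subsequence, so the assertion that the limit $A$ ``inherits the covering bounds'' at all small $\rho$ is unjustified. Again the published argument requires a multi-scale selection rather than a single-scale one.
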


The first point of the above theorem was proved in \cite{Kaenmaki2018rigidity},
based on ideas of \cite[section 2.4]{bishop2016fractals} and \cite{Furstenberg2008405}.
The second point is from \cite{Fraser2019}. See also \cite[section 5]{Fraser2020}. 

The Assouad and lower dimensions will be formally defined in subsection
\ref{subsec:Assouad and lower dimensions}. However, while their formal
definition is rather technical, Theorem \ref{thm:Assouad as max}
provides a more intuitive equivalent definition for compact sets.

Since addressing every microset of a given set is often impractical,
it is common to study the collection of microsets of a given set,
and specifically, their Hausdorff dimension, by examining appropriate
coding trees for the given set. This is the approach taken in this
paper, where we concentrate here in coding trees arising from iterated
function systems. This includes coding by digital expansion for example.

The main goal of this paper is to study the collection of microsets
and the Assouad and lower dimensions of certain randomly constructed
fractals, which we refer to as \emph{Galton-Watson fractals (GWF)}
due to their relation to Galton-Watson processes. The analysis presented
in this paper joins a few recent papers (\cite{Fraser2018assouad,Troscheit2017dimensions,Troscheit2020quasi,HowroydYu2019})
who studied the Assouad dimension of sets generated by some related
probabilistic models, as well as certain related notions (quasi-Assouad
dimension, and Assouad spectrum). These models include Mandelbrot
percolation \cite{Fraser2018assouad}, graphs of (fractional) Brownian
motions \cite{HowroydYu2019}, 1-variable random iterated function
systems \cite{Fraser2018assouad}, and the more general random graph
directed attractors \cite{Troscheit2017dimensions,Troscheit2020quasi}.

In general, for a set $E$ generated by a probabilistic model which
assumes some statistical self similarity, it is expected that the
Assouad dimension of $E$ will be ``as large as possible''. In Theorem
\ref{thm:Hausdorff dimensions of microsets of GWF} we make this precise
for GWF. We show that under certain conditions, for a typical GWF
$E$, the Assouad dimension of $E$ is equal to the maximal Hausdorff
dimension of sets in the support of $E$. Here, the support of $E$
(which is also denoted by $\supp\left(E\right)$) means the support
of the probability distribution of $E$ in a suitable space of compact
sets in $\rd$. Analogously, we show that typically, the lower dimension
of $E$ is the minimal Hausdorff dimension of sets in $\supp\left(E\right)$.
We also show that both quantities, $\dimh\left(E\right)$ and $\diml\left(E\right)$,
may be very easily calculated. 

We show the above by analyzing the collection of microsets of a typical
realization of $E$, studying the relation between the collection
$\mathcal{W}_{E}$ and sets in the support of $E$. As a result we
also obtain that for a typical realization of $E$, every number in
the interval $\left[\diml\left(E\right),\,\dima\left(E\right)\right]$,
is the Hausdorff dimension of some microset in $\mathcal{W}_{E}$.

This is a result of a general analysis of deterministic sets, relating
certain properties of their coding trees to their microsets. We also
apply these results with the required adjustments, to gain some insights
on Furstenberg's homogeneity property. This is done in Section \ref{sec:Fursternberg}.

\subsection{Some preparations}

We now make some necessary preparations for stating the main results
of this paper. All notions presented here will be discussed in further
detail in Section \ref{sec:Preliminaries}.

\subsubsection{Coding trees}

Given a finite iterated function system (IFS) $\Phi=\left\{ \varphi_{i}\right\} _{i\in\Lambda}$
of contracting similarity maps of $\rd$, we denote the attractor
of $\Phi$ by $K_{\Phi}$ or just by $K$ whenever the IFS is clear.
$\Phi$ defines a continuous and surjective coding map $\gamma_{\Phi}:\Lambda^{\mathbb{N}}\to K$,
given by 
\[
\forall i=\left(i_{1},i_{2},...\right)\in\Lambda^{\mathbb{N}},\,\,\,\,\,\gamma_{\Phi}\left(i\right)=\lim_{n\rightarrow\infty}\varphi_{i_{1}}\circ\cdots\circ\varphi_{i_{n}}(0).
\]
For every compact subset $F\subseteq K$, there is a compact set $P\subseteq\Lambda^{\mathbb{N}}$
which codes $F$, i.e., such that $\gamma_{\Phi}\left(P\right)=F$. 

The statements of the main results below involve the following separation
conditions and their respective abbreviations: open set condition
(OSC), weak separation condition (WSC), strong separation condition
(SSC). These will be defined and discussed in subsection \ref{subsec:Iterated-function-systems}.

Given a finite set $\mathbb{A}$, we denote $\mathbb{A}^{*}=\bigcup\limits _{n=0}^{\infty}\mathbb{A}^{n}$
where $\mathbb{A}^{0}=\left\{ \emptyset\right\} $. A \emph{tree}
with alphabet $\mathbb{A}$ is a set $T\subseteq\mathbb{A}^{*}$,
such that $\emptyset\in T$ (which is considered as the root of the
tree), and for every word $a\in T$, every prefix of $a$ is also
a member of $T$. The \emph{boundary} of $T$ is a compact subset
of $\mathbb{A}^{\mathbb{N}}$, defined by $\partial T=\left\{ i_{1}i_{2}\dots\hphantom{}\in\mathbb{A}^{\mathbb{N}}:\,i_{1}\dots i_{n}\in T,\,\forall n\in\mathbb{N}\right\} $.
Given $\Phi$ and $F\subseteq K_{\Phi}$, an infinite tree $T$ with
alphabet $\Lambda$ such that $\gamma_{\Phi}\left(\partial T\right)=F$
is called a \emph{coding tree} for $F$. We further denote $\Gamma_{\Phi}\left(T\right)=\gamma_{\Phi}\left(\partial T\right)$. 

We will often be interested in trees that have no leaves, which means
that $\forall i_{1}\dots i_{n}\in T$ there is some $j\in\mathbb{A}$
such that $i_{1}\dots i_{n}j\in T$, or in words, this means that
every node of the tree has at least one child. For a nonempty finite
set $\mathbb{A}$, the set of trees in the alphabet $\mathbb{A}$
is denoted by $\mathscr{T}_{\mathbb{A}}$, and its subset of trees
with no leaves is denoted by $\mathscr{T}_{\mathbb{A}}^{\prime}$.

\subsubsection{\label{subsec:Galton-Watson-fractals}Galton-Watson fractals}

Given an IFS $\Phi=\left\{ \varphi_{i}\right\} _{i\in\Lambda}$ as
above, let $W$ be a random variable taking values in the finite set
$2^{\Lambda}$. A \emph{Galton-Watson tree} \emph{(GWT)} is a random
tree constructed by iteratively choosing at random the children of
each node of the tree as realizations of independent copies of $W$
concatenated to the current node, starting from the root, namely $\emptyset$.
Whenever $\mathbb{P}\left[W=\emptyset\right]>0$, it may happen that
in some generation of the tree there will be no surviving elements.
In that case, the resulting tree is finite, and we say that the event
of \emph{extinction} occurred. It is well known that unless almost
surely $\left|W\right|=1$, $\mathbb{E}\left(\left|W\right|\right)\leq1\iff\text{extinction occurs a.s.}$
(see \cite[Chapter 5]{Lyons2016}). The case $\mathbb{E}\left(\left|W\right|\right)>1$
is called \emph{supercritical} and in this article, we shall always
assume this is the case.

Conditioned on the event of non-extinction, the projection of the
Galton-Watson tree to $\rd$ using $\Gamma_{\Phi}$ yields a random
(non-empty) compact subset of the attractor $K$, which we call a
\emph{Galton-Watson fractal} \emph{(GWF)}. For convenience, in this
paper, we take the condition of non-extinction into the definition
of a GWF, and so the probability that a GWF is empty is always 0.

An important special case of GWF is a well known model called \emph{fractal
percolation} (AKA \emph{Mandelbrot percolation}). Given two parameters:
$2\leq b\in\mathbb{Z}$, and $p\in\left(0,1\right)$, fractal percolation
in $\rd$ may be defined as the GWF corresponding to the IFS $\Phi=\left\{ \varphi_{i}:\,x\mapsto\dfrac{1}{b}x+\dfrac{i}{b}\right\} _{i\in\Lambda}$
where $\Lambda=\left\{ 0,\dots,b-1\right\} ^{d}$, and $W$ has a
binomial distribution, i.e., every element of $\Lambda$ is contained
in $W$ with probability $p$, and independently of all other members
of $\Lambda$. Geometrically, this model may also be described by
starting from the unit cube, and successively partitioning the surviving
cubes into $b^{d}$ subcubes of the same dimensions, where each of
them either survives with probability $p$ or is discarded with probability
$1-p$, independently of the others. Denoting by $E_{n}$ the union
of all surviving cubes in the $n^{\text{th}}$ generation (so that
$E_{0}=Q$), the resulting set is $E=\underset{n\in\mathbb{N}}{\bigcap}E_{n}$,
which may also be empty if at some point of the process no subcube
survives (AKA extinction). Hence the corresponding GWF has the distribution
of $E$ conditioned on non-extinction.

\subsection{Main results}

\subsubsection{Microsets via limits of descendant trees}

Given a tree $T\in\mathscr{T}_{\mathbb{A}}$, for any $v\in T$, one
may consider the tree growing from the node $v$ (thinking of $v$
as the root). We call this tree the descendants tree of $v$. It is
denoted by $T^{v}$ and formally defined by
\[
T^{v}=\left\{ w\in\mathbb{A}^{*}:\,vw\in T\right\} \in\mathscr{T}_{\mathbb{A}}.
\]
Given an IFS $\Phi=\left\{ \varphi_{i}\right\} _{i\in\Lambda}$ and
$T\in\mathscr{T}_{\Lambda}^{\prime}$, we denote $\mathcal{S}_{\Phi}^{T}=\left\{ \Gamma_{\Phi}\left(T^{v}\right):\,v\in T\right\} $,
and denote by $\overline{\mathcal{S}_{\Phi}^{T}}$ the closure of
$\mathcal{S}_{\Phi}^{T}$ in the Hausdorff metric. 

The following two theorems discuss the relation between microsets
of a given compact set in $\mathbb{R}^{d}$ and $\overline{\mathcal{S}_{\Phi}^{T}}$
for an appropriate coding tree $T$.
\begin{thm}
\label{thm:microset is a union of minisets}Let $F\subseteq\rd$ be
a non-empty compact set. Given a similarity IFS $\Phi=\left\{ \varphi_{i}\right\} _{i\in\Lambda}$
which satisfies WSC and a tree $T\in$$\mathbb{\mathscr{T}}_{\Lambda}^{\prime}$
such that $F=\Gamma_{\Phi}\left(T\right)$, there exists some $n\geq1$
which depends only on $\Phi$, such that for every microset $M$ of
$F$, there are $A_{1},...,A_{n}\in\overline{\mathcal{S}_{\Phi}^{T}}$
and expanding similarity maps $\psi_{1},...,\psi_{n}$, such that
\begin{equation}
Q^{\circ}\cap\left(\bigcup_{j=1}^{n}\psi_{j}A_{j}\right)\subseteq M\subseteq Q\cap\left(\bigcup_{j=1}^{n}\psi_{j}A_{j}\right).\label{eq:microsets are minisets}
\end{equation}
Moreover, if $\Phi$ satisfies the SSC, then $n=1$.
\end{thm}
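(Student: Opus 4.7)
The plan is to unwind the definition of a microset, writing $M=\lim_{k\to\infty}Q\cap\varphi_{k}(F)$ for a sequence of expanding similarities $\varphi_{k}$ with scales $r_{k}\to\infty$, and then decompose each $\varphi_{k}(F)$ into a union of rescaled pieces coded by elements of $\mathcal{S}_{\Phi}^{T}$. Using an adapted stopping-time antichain $V_{k}\subseteq T$ (cutting each branch of $T$ at the first node $v$ with $\mathrm{Lip}(\varphi_{v})<1/(r_{k}s_{\min})$, where $s_{\min}=\min_{i}\mathrm{Lip}(\varphi_{i})$), I would obtain the decomposition
\[
\varphi_{k}(F)=\bigcup_{v\in V_{k}}\psi_{k,v}(F_{v}),\quad\psi_{k,v}:=\varphi_{k}\circ\varphi_{v},\quad F_{v}:=\Gamma_{\Phi}(T^{v})\in\mathcal{S}_{\Phi}^{T},
\]
with every $\psi_{k,v}$ an expanding similarity of scale in a fixed compact interval depending only on $\Phi$.

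The heart of the argument is the WSC bound. Restricting to the relevant words $V_{k}'=\{v\in V_{k}:\psi_{k,v}(K)\cap Q\neq\emptyset\}$, the maps $\varphi_{v}$ for $v\in V_{k}'$ all have scaling ratio of order $1/r_{k}$ and images meeting $\varphi_{k}^{-1}(Q)$, a set of diameter of order $1/r_{k}$; the WSC then bounds the cardinality of the set of \emph{distinct} similarities $\{\varphi_{v}:v\in V_{k}'\}$ by some $n=n(\Phi)$. I would partition $V_{k}'$ into at most $n$ packets of words sharing a common similarity, pick a representative $v(k,j)$ in each packet, and pass to a subsequence (using compactness of the similarity group restricted to scales in the said interval with image meeting $Q$, and of the space of compact subsets of $K$ in the Hausdorff metric) along which $\psi_{k,j}\to\psi_{j}$ and $F_{v(k,j)}\to A_{j}\in\overline{\mathcal{S}_{\Phi}^{T}}$. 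Setting $X_{k}:=\bigcup_{j}\psi_{k,j}(F_{v(k,j)})\to X:=\bigcup_{j}\psi_{j}A_{j}$ in the Hausdorff metric, the general sandwich $Q^{\circ}\cap X\subseteq\liminf_{k}(Q\cap X_{k})\subseteq\limsup_{k}(Q\cap X_{k})\subseteq Q\cap X$ for Hausdorff-convergent compact sets then yields exactly \eqref{eq:microsets are minisets}.

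Under the SSC, cylinders $\varphi_{v}(K)$ at comparable scales are pairwise separated by a gap proportional to their diameter, so by sharpening the stopping time so that each $\psi_{k,v}(K)$ has diameter exceeding $\mathrm{diam}(Q)$, one forces a unique $v\in V_{k}'$ to contribute, giving $n=1$. The main obstacle lies in the WSC step: the WSC bounds only the number of distinct similarities $\varphi_{v}$, not the number of words $v$, so within a packet one may have $\varphi_{v}=\varphi_{v'}$ with $T^{v}\neq T^{v'}$ and hence $F_{v}\neq F_{v'}$, and the reduction of an entire packet to a single representative $F_{v(k,j)}$ needs justification. The containment $M\subseteq Q\cap X$ is unproblematic -- discarding the non-representatives only shrinks the union, and the sandwich tolerates discrepancies on $\partial Q$ -- but the reverse containment $Q^{\circ}\cap X\subseteq M$ is delicate and I expect will require a further refinement of the subsequence, choosing representatives so that their Hausdorff limits $A_{j}$ alone suffice to absorb the entire packet's contribution in the limit.
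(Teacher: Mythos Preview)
Your overall strategy is the same as the paper's: write $M=\lim_{k}Q\cap f_{k}(F)$, decompose $f_{k}^{-1}(Q)\cap F$ using the section $\Pi_{\rho}$ with $\rho\asymp\lambda_{k}^{-1}$, invoke the WSC to bound the number of relevant cylinder sets by some $n=n(\Phi)$, then pass to a subsequence along which the rescaled maps $\psi_{k,j}=f_{k}\circ\varphi_{v_{k,j}}$ and the descendant sets $A_{k,j}=\Gamma_{\Phi}(T^{v_{k,j}})$ converge, and finally apply the sandwich lemma for $Q\cap(\cdot)$. The treatment of bounded scales as a separate easy case and the SSC refinement via a separation lemma also match the paper.

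The subtlety you raise about packets is real, and the paper's proof does not elaborate on it: it simply asserts ``there exist vertices $v_{i,1},\dots,v_{i,n}\in T\cap\Pi_{\lambda_{i}^{-1}/r_{\min}}$'' such that the decomposition $f_{i}^{-1}(Q)\cap F=f_{i}^{-1}(Q)\cap\bigl(\bigcup_{j}\varphi_{v_{i,j}}\Gamma_{\Phi}(T^{v_{i,j}})\bigr)$ holds as an \emph{equality}, effectively treating the WSC bound on distinct cylinder sets as a bound on contributing vertices of $T$. So your proposal is no more and no less complete than the paper's on this point.

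However, your diagnosis of \emph{which} inclusion survives the packet reduction is backwards. If $X_{k}=\bigcup_{j}\psi_{k,j}(F_{v(k,j)})$ is the representative union and $X_{k}^{\mathrm{full}}=\bigcup_{v\in V_{k}'}\psi_{k,v}(F_{v})$ is the full union, then $X_{k}\subseteq X_{k}^{\mathrm{full}}$ and hence $Q\cap X_{k}\subseteq Q\cap X_{k}^{\mathrm{full}}$. Since $M=\lim_{k}Q\cap X_{k}^{\mathrm{full}}$, passing to the limit gives $Q^{\circ}\cap X\subseteq\lim_{k}(Q\cap X_{k})\subseteq M$: the \emph{lower} inclusion $Q^{\circ}\cap X\subseteq M$ is the one that comes for free. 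It is the \emph{upper} inclusion $M\subseteq Q\cap X$ that is in jeopardy, because discarding non-representative words can only shrink the limit set $X$ below the full limit $X^{\mathrm{full}}$, and $M\subseteq Q\cap X^{\mathrm{full}}$ does not yield $M\subseteq Q\cap X$. Your final sentence therefore points the refinement effort at the wrong containment.
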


The theorem above says that except for a possible difference on the
boundary of $Q$, every microset of $F$ may be represented as a finite
union of minisets of sets in $\overline{\mathcal{S}_{\Phi}^{T}}$,
for any coding tree $T$ without leaves, as long as the underlying
IFS satisfies the WSC.%
{} On the other hand, the following holds:
\begin{thm}
\label{thm:minisets are microsets}Let $F\subseteq\rd$ be a non-empty
compact set. Given a similarity IFS $\Phi=\left\{ \varphi_{i}\right\} _{i\in\Lambda}$
and a tree $T\in$$\mathbb{\mathscr{T}}_{\Lambda}^{\prime}$ such
that $F=\Gamma_{\Phi}\left(T\right)$, every miniset $M$ of a member
of $\overline{\mathcal{S}_{\Phi}^{T}}$ is contained in a microset
of $F$. 

Moreover, if $M$ has the form $M=Q\cap\psi\left(A\right)$ for some
$A\in\overline{\mathcal{S}_{\Phi}^{T}}$ and an expanding similarity
map $\psi$, and assuming that $\Phi$ satisfies the OSC with an OSC
set $U$ such that $\psi^{-1}\left(Q\right)\subseteq U$, then $M$
is a microset of $F$.%
\end{thm}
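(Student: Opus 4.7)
Since $A\in\overline{\mathcal{S}_{\Phi}^{T}}$, pick $v_{n}\in T$ with $\Gamma_{\Phi}(T^{v_{n}})\to A$ in the Hausdorff metric. The case where $|v_{n}|$ is bounded (so $v_{n}$ is eventually constant, $A\in\mathcal{S}_{\Phi}^{T}$) will be disposed of at the end; otherwise I may assume $|v_{n}|\to\infty$. Setting $\psi_{n}:=\psi\circ\varphi_{v_{n}}^{-1}$, the scaling ratio $s/r_{v_{n}}$ of $\psi_{n}$ tends to infinity, so $\psi_{n}$ is expanding for large $n$. The key inclusion
\[
\psi\bigl(\Gamma_{\Phi}(T^{v_{n}})\bigr)=\psi_{n}\bigl(\varphi_{v_{n}}(\Gamma_{\Phi}(T^{v_{n}}))\bigr)\subseteq\psi_{n}(F)
\]
shows that the minisets $M_{n}:=Q\cap\psi_{n}(F)$ contain $Q\cap\psi(\Gamma_{\Phi}(T^{v_{n}}))$, a Hausdorff approximation of $M$. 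A convergent subsequence of $M_{n}$ produces a microset $M^{\star}$ of $F$; for every $x\in M\cap Q^{\circ}$ the natural sequence $\psi(a_{n})$, with $a_{n}\in\Gamma_{\Phi}(T^{v_{n}})$ approximating $\psi^{-1}(x)$, eventually lies in $Q$ and so $x\in M^{\star}$.

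The main obstacle is the boundary points $x\in M\cap\partial Q$, where $\psi(a_{n})$ may fall just outside $Q$. To handle them I replace $\psi_{n}$ by a slight radial contraction
\[
\psi_{n}^{(\epsilon_{n})}(y):=c+(1-\epsilon_{n})(\psi_{n}(y)-c),
\]
where $c$ is the center of $Q$ and $\epsilon_{n}\to0$ is chosen to majorize, uniformly over $x\in M$, the ratio $d(\psi(a_{n}),Q)/|\psi(a_{n})-c|$. This supremum tends to zero because the numerator is at most $s$ times the Hausdorff distance from $\Gamma_{\Phi}(T^{v_{n}})$ to $A$, while the denominator is bounded below on $M\cap\partial Q$ thanks to $c\in Q^{\circ}$. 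With this choice $\psi_{n}^{(\epsilon_{n})}$ is still expanding, and the image $c+(1-\epsilon_{n})(\psi(a_{n})-c)$ now lies in $Q$ and converges to $x$, so a Hausdorff limit of the perturbed minisets $Q\cap\psi_{n}^{(\epsilon_{n})}(F)$ is a microset of $F$ containing the whole of $M$. The reserved case $v_{n}\equiv v$ falls out immediately since then $M_{n}\supseteq M$ already and $M_{n}$ itself is a miniset. This proves the first assertion.

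For the ``moreover'' part, the OSC hypothesis combined with $\psi^{-1}(Q)\subseteq U$ upgrades the inclusion $M\subseteq M^{\star}$ to equality. A short topological consequence of OSC is that for distinct words $w,v_{n}$ of equal length, $\partial\varphi_{w}(U)\cap\varphi_{v_{n}}(U)=\emptyset$ (an open point of $\varphi_{v_{n}}(U)$ on $\partial\varphi_{w}(U)$ would force these open sets to meet), so any point of $F$ in $\varphi_{v_{n}}(U)$ is coded through $v_{n}$; applied with $\psi^{-1}(Q)\subseteq U$ this gives the clean identity $M_{n}=Q\cap\psi(\Gamma_{\Phi}(T^{v_{n}}))$. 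Since $\psi^{-1}(Q)$ is compactly contained in the open $U$, the same identity persists after enlarging $Q$ to $\tilde{Q}_{\epsilon_{n}}:=c+(Q-c)/(1-\epsilon_{n})$ for small $\epsilon_{n}$, and the perturbed minisets take the explicit form
\[
Q\cap\psi_{n}^{(\epsilon_{n})}(F)=\bigl\{c+(1-\epsilon_{n})(w-c):w\in\psi(\Gamma_{\Phi}(T^{v_{n}}))\cap\tilde{Q}_{\epsilon_{n}}\bigr\}.
\]
As $\epsilon_{n}\to0$ the right-hand side Hausdorff-converges exactly to $\psi(A)\cap Q=M$: the ``$\supseteq$'' direction follows from the previous paragraph's construction, and the ``$\subseteq$'' direction holds because any convergent sequence of points in $\psi(\Gamma_{\Phi}(T^{v_{n}}))\cap\tilde{Q}_{\epsilon_{n}}$ has its limit in $\psi(A)\cap Q$ (using $\psi(\Gamma_{\Phi}(T^{v_{n}}))\to\psi(A)$ and $\tilde{Q}_{\epsilon_{n}}\searrow Q$). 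Thus $M$ itself is a microset of $F$.
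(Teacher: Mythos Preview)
Your approach is essentially the same as the paper's: approximate $A$ by $A_{n}=\Gamma_{\Phi}(T^{v_{n}})$, use the expanding maps $\psi\circ\varphi_{v_{n}}^{-1}$, and pre-compose with a small radial contraction toward the center $c$ of $Q$ to force approximants of boundary points back into $Q$. The paper packages the perturbation step slightly differently (it enlarges $Q$ to the cube $\psi_{n}(Q)\supseteq Q^{(\delta_{n})}$ with $\delta_{n}=d_{H}(\psi(A_{n}),\psi(A))$ and then invokes its Lemma on intersections with thickenings), but the maps $\psi_{n}^{-1}$ it obtains are exactly radial contractions toward $c$, so the two arguments coincide.

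There is, however, a small technical slip in your choice of $\epsilon_{n}$. Requiring $\epsilon_{n}\ge d(\psi(a_{n}),Q)/|\psi(a_{n})-c|$ does \emph{not} guarantee $c+(1-\epsilon_{n})(\psi(a_{n})-c)\in Q$. For a point $y\notin Q$ the correct threshold is $\epsilon\ge|y-p|/|y-c|$, where $p$ is the exit point of the segment $[c,y]$ on $\partial Q$, and in general $|y-p|$ exceeds $d(y,Q)$ (e.g.\ in $\mathbb{R}^{2}$ take $y=(1.1,1.05)$: one computes $|y-p|/|y-c|\approx0.167$ while $d(y,Q)/|y-c|\approx0.137$). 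The fix is immediate --- a dimensional constant suffices, since coordinatewise $\tfrac{\max(y_{i}-1,-y_{i},0)}{|y_{i}-1/2|}\le 2\,d(y,Q)$ --- so taking, say, $\epsilon_{n}=2\,s\cdot d_{H}(A_{n},A)$ works; but as written the claimed implication fails. The paper's route via $Q^{(\delta_{n})}\subseteq\psi_{n}(Q)$ sidesteps this issue entirely and yields the convergence $Q\cap\psi_{n}^{-1}\psi(A_{n})\to M$ in one stroke. Your treatment of the ``moreover'' part (the OSC identity $F\cap\varphi_{v_{n}}(U)\subseteq\varphi_{v_{n}}(\Gamma_{\Phi}(T^{v_{n}}))$ and its persistence under the slight enlargement of $Q$) is correct and matches the paper's.
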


The notion of an OSC set is a part of the definition of the OSC, which
is given in Definition \ref{def:OSC and SSC}. 

The difference between the sets in Equation (\ref{eq:microsets are minisets})
is contained in the boundary of $Q$. Since $\dimh\left(\partial Q\right)=d-1$,
whenever $\dimh\left(M\right)>d-1$, this difference has no influence
on the Hausdorff dimension of the sets. Hence the following is an
immediate corollary of Theorem \ref{thm:microset is a union of minisets}.
\begin{cor}
\label{cor:microsets with dim>d-1}Under the assumptions of Theorem
\ref{thm:microset is a union of minisets}, for every microset $M$
of $F$ with $\dimh\left(M\right)>d-1$, there is a miniset $A$ of
some set in $\overline{\mathcal{S}_{\Phi}^{T}}$, with $\dimh\left(A\right)=\dimh\left(M\right)$.
\end{cor}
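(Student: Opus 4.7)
The plan is to read off the conclusion from Theorem \ref{thm:microset is a union of minisets} by using the fact that the ambiguity in that theorem lives on $\partial Q$, which has Hausdorff dimension $d-1$, and therefore cannot affect the dimension of a set of dimension strictly greater than $d-1$.

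Concretely, first I would apply Theorem \ref{thm:microset is a union of minisets} to the microset $M$, producing $A_1,\dots,A_n\in\overline{\mathcal{S}_\Phi^T}$ and expanding similarities $\psi_1,\dots,\psi_n$ such that
\[
Q^{\circ}\cap\Bigl(\bigcup_{j=1}^{n}\psi_{j}A_{j}\Bigr)\;\subseteq\;M\;\subseteq\;Q\cap\Bigl(\bigcup_{j=1}^{n}\psi_{j}A_{j}\Bigr).
\]
Setting $N:=Q\cap\bigl(\bigcup_{j=1}^{n}\psi_{j}A_{j}\bigr)=\bigcup_{j=1}^{n}(Q\cap\psi_{j}A_{j})$, the two inclusions above give $N\setminus M\subseteq Q\setminus Q^{\circ}=\partial Q$, so the symmetric difference $M\triangle N$ is contained in $\partial Q$, and hence has Hausdorff dimension at most $d-1$.

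Next I would use the monotonicity and finite stability of $\dimh$. Since $\dimh(M)>d-1\geq\dimh(M\triangle N)$, we have $\dimh(N)=\dimh(M)$. By finite stability,
\[
\dimh(M)=\dimh(N)=\max_{1\leq j\leq n}\dimh(Q\cap\psi_{j}A_{j}),
\]
so some index $j^{\star}$ attains the maximum, and in particular $Q\cap\psi_{j^{\star}}A_{j^{\star}}$ has dimension $\dimh(M)>d-1>0$, hence is non-empty. Setting $A:=Q\cap\psi_{j^{\star}}(A_{j^{\star}})$ produces a miniset of $A_{j^{\star}}\in\overline{\mathcal{S}_\Phi^T}$ with $\dimh(A)=\dimh(M)$, which is exactly the required conclusion. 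There is no real obstacle here; the only thing to be careful about is that a miniset is required to be non-empty in Definition \ref{def:microsets}, and this is guaranteed precisely by the assumption $\dimh(M)>d-1$ combined with finite stability.
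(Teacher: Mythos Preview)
Your argument is correct and is exactly the approach the paper intends: the paper simply remarks that the difference between the two sides of \eqref{eq:microsets are minisets} lies in $\partial Q$, which has Hausdorff dimension $d-1$, and then appeals to finite stability of $\dimh$ to single out one of the minisets $Q\cap\psi_{j}A_{j}$. The only (harmless) slip is the chain ``$\dimh(M)>d-1>0$'', which fails when $d=1$; replace it by $\dimh(M)>d-1\ge 0$, and non-emptiness of $A$ still follows since a set of strictly positive Hausdorff dimension is non-empty.
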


In particular, in the special case where $d=1$, the above holds for
every $M\in\mathcal{W}_{F}$ (this includes the case $\dimh\left(M\right)=0$
since there are minisets whose Hausdorff dimension is 0). Note that
$\dimh\left(M\right)>d-1$ implies that $M\cap Q^{\circ}\neq\emptyset$
and so $M\in\mathcal{W}_{F}$, and $A\cap Q^{\circ}\neq\emptyset$
as well.
\begin{rem}
If one wishes to move from minisets of elements of $\overline{\mathcal{S}_{\Phi}^{T}}$
to just elements of $\overline{\mathcal{S}_{\Phi}^{T}}$, one needs
to take the closure of the set of Hausdorff dimensions, that is 
\[
\left\{ \dimh\left(A\right):\,A\in\mathcal{W}_{F}\right\} \cap\left(d-1,d\right]\subseteq\overline{\left\{ \dimh\left(S\right):\,S\in\overline{\mathcal{S}_{\Phi}^{T}}\right\} }.
\]
For more details see Subsection \ref{subsec:Hausdorff dim of branch sets vs their minisets},
and in particular Proposition \ref{prop:minisets of branch sets in int(Q) vs branch sets dim>d-1}.
\end{rem}

If the underlying IFS satisfies the strong separation condition, the
converse direction of Corollary \ref{cor:microsets with dim>d-1}
holds as well. See Corollary \ref{cor:dim of minisets contained in dim of microsets under SSC}.

\subsubsection{Consequences regarding Assouad and lower dimensions }

After the relation between $\mathcal{W}_{F}$ and $\overline{\mathcal{S}_{\Phi}^{T}}$
has been established, the following may be deduced regarding the lower
and Assouad dimensions.
\begin{thm}
\label{thm:Assuad and lower dimensions as branching sets}Let $F\subseteq\mathbb{R}^{d}$
be a non-empty compact set, and let $\Phi=\left\{ \varphi_{i}\right\} _{i\in\Lambda}$
be a similarity IFS which satisfies the WSC, such that $F=\gamma_{\Phi}\left(\partial T\right)$
for some tree $T\in$$\mathbb{\mathscr{T}}_{\Lambda}^{\prime}$.
\begin{enumerate}
\item \label{enu:dim_A}$\dima\left(F\right)=\max\left\{ \dimh\left(S\right):\,S\in\overline{\mathcal{S}_{\Phi}^{T}}\right\} $.
\item \label{enu:dim_L_lower_bound} $\diml\left(F\right)\geq\inf\left\{ \text{\ensuremath{\dimh\left(S\right):\,S\in}}\overline{\mathcal{S}_{\Phi}^{T}}\right\} $.
Moreover, $\exists S\in\overline{\mathcal{S}_{\Phi}^{T}}$ with $\dimh\left(S\right)\leq\diml\left(F\right)$.
\item \label{enu:dim_L upper bound}$\diml\left(F\right)\leq\inf\left\{ \dimh\left(\psi\left(S\right)\cap Q\right):\,\begin{array}{l}
\text{\ensuremath{S\in}}\overline{\mathcal{S}_{\Phi}^{T}},\,\\
\psi\text{ is a similarity function,}\\
\psi^{-1}\left(Q\right)\subseteq U\text{ for an OSC set \ensuremath{U},}\\
\text{and }\text{\ensuremath{\psi\left(S\right)\cap Q^{\circ}\neq\emptyset}}
\end{array}\right\} $, if $\Phi$ satisfies the OSC.
\item \label{enu:dim_L SSC}$\diml\left(F\right)=\min\left\{ \text{\ensuremath{\dimh\left(S\right):\,S\in}}\overline{\mathcal{S}_{\Phi}^{T}}\right\} $,
if $\Phi$ satisfies the SSC.
\end{enumerate}
\end{thm}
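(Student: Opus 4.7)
The plan is to deduce all four items from Theorem~\ref{thm:Assouad as max} combined with the sandwich Theorem~\ref{thm:microset is a union of minisets} and the miniset-to-microset correspondence of Theorem~\ref{thm:minisets are microsets}. For item~(\ref{enu:dim_A}), first pick a microset $M\in\mathcal{W}_{F}$ realising $\dima(F)$ via Theorem~\ref{thm:Assouad as max}, apply Theorem~\ref{thm:microset is a union of minisets} to obtain $M\subseteq Q\cap\bigcup_{j=1}^{n}\psi_{j}A_{j}$ with $A_{j}\in\overline{\mathcal{S}_{\Phi}^{T}}$, and use finite stability of $\dimh$ and similarity invariance to produce some $A_{j^{*}}$ with $\dimh(A_{j^{*}})\geq\dima(F)$. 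In the other direction, for any $S\in\overline{\mathcal{S}_{\Phi}^{T}}$ and $\varepsilon>0$, countable stability of $\dimh$ gives a ball $B$ with $\dimh(S\cap B)>\dimh(S)-\varepsilon$ and $\mathrm{diam}(S\cap B)<1$; choose an expanding similarity $\psi$ sending $S\cap B$ into the interior $Q^{\circ}$, so that $Q\cap\psi(S)$ is a miniset of $S$ of Hausdorff dimension exceeding $\dimh(S)-\varepsilon$. Theorem~\ref{thm:minisets are microsets} shows this miniset is contained in a microset of $F$ (lying in $\mathcal{W}_{F}$ by construction), and Theorem~\ref{thm:Assouad as max} yields $\dima(F)\geq\dimh(S)-\varepsilon$; letting $\varepsilon\to 0$ closes item~(\ref{enu:dim_A}) with the maximum attained at $A_{j^{*}}$.

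For item~(\ref{enu:dim_L_lower_bound}) it is enough to establish the \emph{moreover} clause. Pick $M^{*}\in\mathcal{W}_{F}$ with $\dimh(M^{*})=\diml(F)$ via Theorem~\ref{thm:Assouad as max}, write $M^{*}=\lim\phi_{n}(F)\cap Q$ for expanding similarities $\phi_{n}$ with ratios $r_{n}\to\infty$, and fix $y^{*}\in M^{*}\cap Q^{\circ}$ approximated by $\phi_{n}(\gamma_{\Phi}(i^{n}))$ with $i^{n}\in\partial T$. Let $v_{n}$ be the shortest prefix of $i^{n}$ whose contraction ratio falls below a fixed threshold $\alpha/r_{n}$; by finiteness of the IFS, the scaling ratio of $\tilde{\psi}_{n}:=\phi_{n}\circ\varphi_{v_{n}}$ lies in a fixed compact subinterval of $(0,\infty)$, and for $\alpha$ small enough and $n$ large we have $\tilde{\psi}_{n}(K_{\Phi})\subseteq Q^{\circ}$. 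Passing to a subsequence, $\tilde{\psi}_{n}\to\tilde{\psi}$ (a non-degenerate similarity) and $\Gamma_{\Phi}(T^{v_{n}})\to S\in\overline{\mathcal{S}_{\Phi}^{T}}$ in the Hausdorff metric. Since $\tilde{\psi}_{n}(\Gamma_{\Phi}(T^{v_{n}}))\subseteq\phi_{n}(F)\cap Q$ and compact-set inclusions pass to Hausdorff limits, $\tilde{\psi}(S)\subseteq M^{*}$; combined with $\tilde{\psi}(S)\subseteq Q$ this gives $\dimh(S)=\dimh(\tilde{\psi}(S))\leq\dimh(M^{*})=\diml(F)$.

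Item~(\ref{enu:dim_L upper bound}) is immediate from Theorem~\ref{thm:minisets are microsets}: for any admissible pair $(S,\psi)$, the set $Q\cap\psi(S)$ is a microset of $F$ meeting $Q^{\circ}$, hence lies in $\mathcal{W}_{F}$, and Theorem~\ref{thm:Assouad as max} yields $\diml(F)\leq\dimh(Q\cap\psi(S))$; taking the infimum gives the stated bound. For item~(\ref{enu:dim_L SSC}), the SSC admits an OSC set $U$ that is any sufficiently small open neighbourhood of $K_{\Phi}$; for every $S\in\overline{\mathcal{S}_{\Phi}^{T}}$, $s\in S$ and $t'\in Q^{\circ}$, the similarity $\psi(x)=r(x-s)+t'$ with $r$ large satisfies $\psi^{-1}(Q)\subseteq U$ (since $\psi^{-1}(Q)$ shrinks to $s\in U$) and $\psi(S)\cap Q^{\circ}\ni t'$; since $\dimh(\psi(S)\cap Q)\leq\dimh(S)$, item~(\ref{enu:dim_L upper bound}) yields $\diml(F)\leq\inf_{S}\dimh(S)$, and together with item~(\ref{enu:dim_L_lower_bound}) this produces the desired equality with the minimum attained by the $S$ furnished by the \emph{moreover} clause.

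The main obstacle is the vertex selection in item~(\ref{enu:dim_L_lower_bound}): the $v_{n}$ must simultaneously control the scaling of $\tilde{\psi}_{n}$, the containment $\tilde{\psi}_{n}(K_{\Phi})\subseteq Q^{\circ}$, and compatibility with the zoom-in sequence defining $M^{*}$. The delicate point is ensuring that the limit $\tilde{\psi}$ does not degenerate to a constant map; the two-sided scaling bound on $\tilde{\psi}_{n}$, coming from the finiteness of contraction ratios in the IFS, is what prevents this, and it is also where the argument implicitly mirrors the construction underlying Theorem~\ref{thm:microset is a union of minisets}.
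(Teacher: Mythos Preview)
Your proof is correct and follows the same high-level route as the paper: all four items are reduced to Theorem~\ref{thm:Assouad as max} via Theorems~\ref{thm:microset is a union of minisets} and~\ref{thm:minisets are microsets}. Items~(\ref{enu:dim_A}) and~(\ref{enu:dim_L upper bound}) match the paper essentially verbatim.

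The one substantive difference is your treatment of item~(\ref{enu:dim_L_lower_bound}). The paper simply quotes the left-hand inclusion of Theorem~\ref{thm:microset is a union of minisets} to obtain $Q^{\circ}\cap\psi(F')\subseteq M^{*}$ with $F'\in\overline{\mathcal{S}_{\Phi}^{T}}$, then invokes Lemma~\ref{lem:Branch-set containd in a miniset} on a coding tree $Y\in\overline{\mathscr{D}_{T}}$ for $F'$, and finally Lemma~\ref{lem:branch set of a branch set} to pull the resulting descendant set back into $\overline{\mathcal{S}_{\Phi}^{T}}$. You instead rerun the zoom-in construction directly on the minisets converging to $M^{*}$, choosing prefixes $v_{n}\in T$ so that the limit lands in $\overline{\mathcal{S}_{\Phi}^{T}}$ without the extra two lemmas. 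This is a genuine simplification: working with $v_{n}\in T$ throughout avoids the ``branch set of a branch set'' detour. One small omission: you assume the scaling ratios $r_{n}\to\infty$, whereas a microset may arise from a bounded sequence; the bounded case is the easy case of Theorem~\ref{thm:microset is a union of minisets} (giving $Q^{\circ}\cap\psi(F)\subseteq M^{*}$ directly, after which a deep enough cylinder of $T$ around a point of $M^{*}\cap Q^{\circ}$ finishes as in your argument), but it should be mentioned.

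For item~(\ref{enu:dim_L SSC}) the paper cites Corollary~\ref{cor:dim of minisets contained in dim of microsets under SSC}, while you argue directly from~(\ref{enu:dim_L upper bound}) using Lemma~\ref{lem:SSC implies OSC set containing the attractor}; these are the same idea unpacked.
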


The reader should note that for the special case where all the contraction
ratios of the maps of $\Phi$ are equal, (\ref{enu:dim_A}) was proved
in \cite{Kaenmaki2018rigidity}, but with a $\sup$ instead of a $\max$.\footnote{In fact it was proved for a more general construction called Moran
construction, however some restriction on the diameters of the sets
in this construction implies that for similarity IFSs the result may
only be applied when all the contraction ratios are equal.} 

\subsubsection{Microsets of Galton-Watson fractals}

Let $E$ be a Galton-Watson fractal w.r.t. a similarity IFS $\Phi=\left\{ \varphi_{i}\right\} _{i\in\Lambda}$.
Then $E$ may be considered as a random variable which takes values
in the space of compact subsets of $K_{\Phi}$. We denote by $\supp\left(E\right)$
the support of the distribution of $E$, which is a Borel probability
measure on the space of compact subsets of $K_{\Phi}$, w.r.t. the
topology induced by the Hausdorff metric. Let $T$ be the corresponding
Galton-Watson tree (so that $E$ is defined by $\Gamma_{\Phi}\left(T\right)$
conditioned on non-extinction), and denote by $T^{\prime}$ the reduced
tree without leaves (whenever $T$ is infinite). That is, $T^{\prime}$
is the set of all nodes of $T$ that have an infinite line of descendants,
hence $T^{\prime}\in\mathscr{T}_{\Lambda}^{\prime}$. We show that
a.s. conditioned on non-extinction, $\overline{\mathcal{S}_{\Phi}^{T^{\prime}}}=\supp\left(E\right)$.
Thus, applying Theorems \ref{thm:microset is a union of minisets},
\ref{thm:minisets are microsets}, we obtain a strong relation between
$\supp\left(E\right)$ and $\mathcal{W}_{E}$ (Theorems \ref{thm:microsets as minisets for GWF},
\ref{thm:minisets as microsets for GWF}). This is done in Subsection
\ref{subsec:Microsets-and-dimensions-of GWF}. 

As an example, in the simple case where $E$ is a realization of a
supercritical Mandelbrot percolation in $\rd$, its probability distribution
is fully supported on the space of compact subsets of $Q$. Hence,
we obtain that a.s. conditioned on non-extinction, every closed subset
of the unit cube is a microset of $E$. In particular, this implies
that $\left\{ \dimh\left(F\right):\,F\in\mathcal{W}_{E}\right\} =\left[0,d\right]$,
and therefore $\diml\left(E\right)=0$ and $\dima\left(E\right)=d$
(the latter was already shown in \cite{Fraser2018assouad}).

In the general case, we obtain the following theorem:
\begin{thm}
\label{thm:Hausdorff dimensions of microsets of GWF}Let $E$ be a
Galton-Watson fractal constructed with respect to a similarity IFS
$\Phi=\left\{ \varphi_{i}\right\} _{i\in\Lambda}$, which satisfies
the OSC, and an offspring distribution $W$. Denote,
\[
\begin{array}{c}
m_{W}=\min\limits _{A\in\supp\left(W\right)}\dimh\left(K_{A}\right)\\
M_{W}=\max\limits _{A\in\supp\left(W\right)}\dimh\left(K_{A}\right)
\end{array},
\]
where in case $\emptyset\in\supp\left(W\right)$, we set $m_{W}=0$. 

Then 
\[
\left\{ \dimh\left(F\right):\,F\in\supp\left(E\right)\right\} =\left[m_{W},M_{W}\right],
\]
and almost surely,
\[
\left\{ \dimh\left(F\right):\,F\in\mathcal{W}_{E}\right\} =\left[m_{W},M_{W}\right].
\]
In particular, almost surely
\[
\diml\left(E\right)=m_{W},\,\dima\left(E\right)=M_{W}.
\]
\end{thm}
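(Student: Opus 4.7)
The plan is to tackle the three conclusions in order: first the deterministic equality $\left\{ \dimh\left(F\right):\,F\in\supp\left(E\right)\right\} =\left[m_{W},M_{W}\right]$; then, conditional on non-extinction, transfer this to the almost sure equality for $\mathcal{W}_{E}$ via the deterministic Theorems \ref{thm:microset is a union of minisets} and \ref{thm:minisets are microsets} together with the almost sure identity $\overline{\mathcal{S}_{\Phi}^{T^{\prime}}}=\supp\left(E\right)$ established earlier in the paper; finally, Theorem \ref{thm:Assouad as max} delivers $\dima\left(E\right)=M_{W}$ and $\diml\left(E\right)=m_{W}$. For the deterministic step, every $F\in\supp\left(E\right)$ has the form $\Gamma_{\Phi}\left(T\right)$ for an infinite tree $T$ whose children set at every node lies in $\supp\left(W\right)$. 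Since $\Phi$ satisfies the OSC, the Moran equation gives $\sum_{i\in A}r_{i}^{\dimh\left(K_{A}\right)}=1$ for every $A\in\supp\left(W\right)$, whence $\sum_{i\in A}r_{i}^{M_{W}}\leq 1$, and when $m_{W}>0$ also $\sum_{i\in A}r_{i}^{m_{W}}\geq 1$. Iterating the first inequality along $T$ bounds the $s$-cylinder sums for every $s>M_{W}$ and yields $\dimh\left(F\right)\leq M_{W}$; the second lets me define a normalized mass distribution $\mu$ on $F$ satisfying $\mu\left(\varphi_{v}\left(K\right)\right)\leq\prod_{k=1}^{n}r_{i_{k}}^{m_{W}}$ for every $v=i_{1}\cdots i_{n}\in T$, and the mass distribution principle (using OSC to compare balls and cylinders) yields $\dimh\left(F\right)\geq m_{W}$; the case $m_{W}=0$ is trivial.

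For intermediate values I would fix $A_{\min},A_{\max}\in\supp\left(W\right)$ realizing $m_{W}$ and $M_{W}$, and for each $\lambda\in\left[0,1\right]$ construct a deterministic tree $T_{\lambda}$ using $A_{\max}$ at every level-$k$ node for $k$ in a prescribed subset of $\mathbb{N}$ of asymptotic density $\lambda$, and $A_{\min}$ elsewhere. The resulting Moran-type set has Hausdorff dimension continuous in $\lambda$, equal to $m_{W}$ at $\lambda=0$ and $M_{W}$ at $\lambda=1$, so every $\alpha\in\left[m_{W},M_{W}\right]$ is attained; each $\Gamma_{\Phi}\left(T_{\lambda}\right)$ lies in $\supp\left(E\right)$ because every finite truncation of $T_{\lambda}$ has positive probability under the Galton-Watson measure. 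To transfer this to microsets, I condition on non-extinction and combine Theorem \ref{thm:minisets are microsets} with the identity $\overline{\mathcal{S}_{\Phi}^{T^{\prime}}}=\supp\left(E\right)$: every sufficiently interior miniset of an element of $\supp\left(E\right)$ is then a microset of $E$, and since minisets preserve Hausdorff dimension this gives $\left[m_{W},M_{W}\right]\subseteq\left\{ \dimh\left(F\right):\,F\in\mathcal{W}_{E}\right\} $. The reverse inclusion comes from Theorem \ref{thm:microset is a union of minisets}: every microset of $E$ equals, up to a $\partial Q$-correction, a finite union of at most $n$ minisets of elements of $\supp\left(E\right)$, so its Hausdorff dimension is at most $M_{W}$.

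With the microset identification in hand, Theorem \ref{thm:Assouad as max} gives $\dima\left(E\right)=\max\left\{ \dimh\left(F\right):\,F\in\mathcal{W}_{E}\right\} =M_{W}$ and symmetrically $\diml\left(E\right)=m_{W}$ almost surely. The main obstacle I anticipate is the interpolation step when the contraction ratios $r_{i}$ vary: there the Hausdorff dimension of $\Gamma_{\Phi}\left(T_{\lambda}\right)$ is the solution of a non-trivial recursive Moran-type equation rather than a simple arithmetic mean, so continuity of $\lambda\mapsto\dimh\left(\Gamma_{\Phi}\left(T_{\lambda}\right)\right)$ and identification of the endpoint values will require a careful analysis of the associated pressure function. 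A secondary subtlety is the $\partial Q$-correction in Theorem \ref{thm:microset is a union of minisets}, which is harmless for microsets of dimension greater than $d-1$ but must be controlled at the lower end of the spectrum; this is bypassed by proving $\left[m_{W},M_{W}\right]\subseteq\left\{ \dimh\left(F\right):\,F\in\mathcal{W}_{E}\right\} $ directly from Theorem \ref{thm:minisets are microsets}, avoiding any boundary subtraction.
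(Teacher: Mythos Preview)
Your overall architecture matches the paper's: identify $\left\{ \dimh(F):F\in\supp(E)\right\} $ with $\left[m_{W},M_{W}\right]$, transfer to $\mathcal{W}_{E}$ via the almost sure identity $\overline{\mathcal{S}_{\Phi}^{T^{\prime}}}=\supp(E)$ together with Theorems \ref{thm:microset is a union of minisets} and \ref{thm:minisets are microsets}, then read off the dimensions from Theorem \ref{thm:Assouad as max}. Your interpolation for intermediate values is genuinely different: the paper sidesteps the pressure-function continuity problem you correctly flag by a probabilistic trick, setting $W_{t}=t\delta_{A_{\min}}+(1-t)\delta_{A_{\max}}$ and invoking Theorem \ref{thm:dimension of GW fractals} so that the almost-sure dimension is the unique $s$ solving $t\sum_{i\in A_{\min}}r_{i}^{s}+(1-t)\sum_{i\in A_{\max}}r_{i}^{s}=1$, which is transparently continuous in $t$. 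This is considerably cleaner than analyzing $\dimh\left(\Gamma_{\Phi}(T_{\lambda})\right)$ for your density-$\lambda$ trees.

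There are, however, two genuine gaps in your transfer to $\mathcal{W}_{E}$. First, the step ``every sufficiently interior miniset of an element of $\supp(E)$ is then a microset of $E$, and since minisets preserve Hausdorff dimension\dots'' does not go through as stated. Theorem \ref{thm:minisets are microsets} requires $\psi^{-1}(Q)\subseteq U$ for an OSC set $U$, but a given $A\in\supp(E)$ need not meet $U$ at all (it may lie entirely in $\partial U$), so there is no reason a miniset of $A$ obeying this constraint should retain $\dimh(A)$. The paper resolves this (Theorem \ref{thm:dimension of minisets in supp of GWF} and Lemma \ref{lem:sets in supp(E) as microsets of E using strong OSC }) by using the strong OSC to locate a cylinder $\varphi_{j}(K)\subseteq U$, and then building a new $\tilde{A}\in\supp(E)$ whose $j$-subtree is a coding tree for $A$ and whose remaining branches have dimension at most $\dimh(A)$; a miniset of $\tilde{A}$ with window inside $U$ then has exactly the right dimension.

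Second, your reverse inclusion via Theorem \ref{thm:microset is a union of minisets} only yields $\dimh(M)\leq M_{W}$ for $M\in\mathcal{W}_{E}$; you never establish $\dimh(M)\geq m_{W}$, and without it neither the equality $\left\{ \dimh(F):F\in\mathcal{W}_{E}\right\} =\left[m_{W},M_{W}\right]$ nor $\diml(E)=m_{W}$ follows. The containment $Q^{\circ}\cap\bigcup_{j}\psi_{j}(A_{j})\subseteq M$ does not give this directly, since the interior trace of a miniset can have strictly smaller dimension than the set itself. The paper closes this via Theorem \ref{thm:Assuad and lower dimensions as branching sets}(\ref{enu:dim_L_lower_bound}), whose proof (Lemma \ref{lem:Branch-set containd in a miniset}) shows that any nonempty $Q^{\circ}\cap\psi(A)$ contains a full cylinder image $\varphi_{v}\left(\Gamma_{\Phi}(Y^{v})\right)$ with $\Gamma_{\Phi}(Y^{v})\in\overline{\mathcal{S}_{\Phi}^{T}}=\supp(E)$, hence has dimension at least $m_{W}$.
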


In the statement of the theorem above, $\supp\left(W\right)$ denotes
the support of the probability distribution of $W$ on the finite
set $2^{\Lambda}$. Hence, $m_{W}$ and $M_{W}$ may be very easily
calculated whenever the offspring distribution $W$ is given. However,
they have another meaning as well, namely, they are the minimal and
maximal Hausdorff dimensions of sets in $\supp\left(E\right)$.
\begin{rem}
Another notion of dimension which is related to the lower dimension
is the \emph{modified lower dimension}. For any $A\subseteq\rd$,
the modified lower dimension of $A$ is defined by 
\[
\dim_{\text{ML}}\left(A\right)=\sup\left\{ \diml\left(B\right):\,B\subseteq A\right\} .
\]
For more details the reader is referred to \cite{Fraser2020}. We
remark here that for any GWF $E\subseteq\rd$, which is constructed
with respect to a similarity IFS that satisfies the OSC, almost surely,
\[
\dim_{\text{ML}}\left(E\right)=\dimh\left(E\right).
\]
This follows from Theorem 1.8 in \cite{dayan_2021}, which asserts
that $E$ a.s. contains Ahlfors-regular subsets whose Hausdorff dimension
(which is equal to their lower dimension) is arbitrarily close to
$\dimh\left(E\right)$.
\end{rem}

\subsubsection{Furstenberg's homogeneity}

So far we have only considered minisets and microsets as defined
in Definition \ref{def:microsets}. However, this definition is slightly
different than Furstenberg's definition from \cite{Furstenberg2008405},
which allows only homotheties rather than general similarity maps,
and is closed under taking compact subsets. This is formally defined
in Definition \ref{def:F-microsets}, using the phrases \emph{F-microsets}
and \emph{F-minisets}. 

Allowing compact subsets in the definition of F-minisets, makes the
relation between the collection of F-microsets of a given compact
set and $\overline{\mathcal{S}_{\Phi}^{T}}$, for an appropriate coding
tree $T$ and an IFS of homotheties $\Phi$, nicer than the one established
in Theorems \ref{thm:microset is a union of minisets}, \ref{thm:minisets are microsets}.
Namely, for a non-empty compact set $F\subseteq\rd$ , an IFS of homotheties
$\Phi=\left\{ \varphi_{i}\right\} _{i\in\Lambda}$, and $T\in$$\mathbb{\mathscr{T}}_{\Lambda}^{\prime}$
a coding tree for $F$, we obtain that every F-miniset of a member
of $\overline{\mathcal{S}_{\Phi}^{T}}$ is an F-microset of $F$,
and on the other hand, whenever $\Phi$ satisfies the WSC, every F-microset
of $F$ is a (bounded) finite union of F-minisets of members of $\overline{\mathcal{S}_{\Phi}^{T}}$.
This is the content of Theorems \ref{thm:Furstenberg microset is a union of minisets}
and \ref{thm:F-minisets are F-microsets} in Section \ref{sec:Fursternberg}.

If $F$ is the attractor of $\Phi$, so that we may take $T=\Lambda^{*}$
and hence $\overline{\mathcal{S}_{\Phi}^{T}}=\left\{ F\right\} $,
we obtain from the above that if $\Phi$ satisfies the WSC, then every
F-microset of $F$ is a finite union of F-minisets of $F$. We view
this property as a weaker form of Furstenberg's homogeneity property
(where every F-microset is an F-miniset), and show that for self-homothetic
sets in $\mathbb{R}$ whose Hausdorff dimension is smaller than 1,
this property is equivalent to WSC. 

This corresponds to a discussion from \cite{KaenmakiRossi2016} where
it was shown that for self-homothetic subsets of $\mathbb{R}$ whose
Hausdorff dimension is smaller than 1, homogeneity implies WSC, but
included a counter example for the converse implication as well, showing
that homogeneity is not equivalent to WSC for that class of sets.

\subsection{Structure of the paper}

In Section \ref{sec:Preliminaries} some relevant preparations are
made. First we introduce some basic definitions regarding symbolic
spaces and trees. Then we recall some properties of the Hausdorff
metric, which will be useful in what follows. Then IFSs and their
attractors are discussed, and some technical properties of the coding
map are established. Finally the lower and Assouad dimensions are
formally defined. 

In Section \ref{sec:Microsets-via-coding} the relation between the
collection of microsets and the collection $\overline{\mathcal{S}_{\Phi}^{T}}$
is discussed. In particular, this section contains the proofs of Theorems
\ref{thm:microset is a union of minisets}, \ref{thm:minisets are microsets},
and \ref{thm:Assuad and lower dimensions as branching sets}, as well
as some of their corollaries. 

Section \ref{sec:Galton-Watson-fractals} deals with Galton-Watson
fractals. After providing a formal definition, and discussing some
relevant properties regarding the reduced trees of Galton-Watson trees,
and the support of Galton-Watson fractals, we apply the results of
Section \ref{sec:Microsets-via-coding} and obtain some results regarding
the microsets of GWF. In particular Theorem \ref{thm:Hausdorff dimensions of microsets of GWF}
is proved. 

In Section \ref{sec:Fursternberg}, we recall Furstenberg's original
definition of microsets, and make some adjustments of the results
from previous sections to Furstenberg's definition. Furstenberg's
homogeneity property is also discussed. In view of a discussion from
\cite{KaenmakiRossi2016}, a weaker property is suggested for consideration,
which still implies dimension conservation (as defined in \cite{Furstenberg2008405}),
but is equivalent to the WSC for compact subsets of $\mathbb{R}$
whose Hausdorff dimension is smaller than 1.

\section{Preliminaries\label{sec:Preliminaries}}

\subsection{Symbols and trees\label{subsec:Symbols-and-trees}}

Given a finite set $\mathbb{A}$, as mentioned in Section \ref{sec:Introduction},
we denote by $\mathbb{A}^{*}$ the set of all finite words in the
alphabet $\mathbb{A}$, i.e. $\mathbb{A}^{*}=\bigcup\limits _{n=0}^{\infty}\mathbb{A}^{n}$
where $\mathbb{A}^{0}=\left\{ \emptyset\right\} $. Given a word $i\in\mathbb{A}^{*}$,
we use subscript indexing to denote the letters comprising $i$, so
that $i=i_{1}...i_{n}$ where $i_{k}\in\mathbb{A}$ for $k=1,...,n$.
$\mathbb{A}^{*}$ is considered as a semigroup with the concatenation
operation $\left(i_{1}...i_{n}\right)\cdot\left(j_{1}...j_{m}\right)=\left(i_{1}...i_{n},j_{1}...j_{m}\right)$
and with $\emptyset$ the identity element. The dot notation will
usually be omitted so that the concatenation of two words $i,j\in\mathbb{A}^{*}$
will be denoted simply by $ij$. We will also consider the action
of $\mathbb{A}^{*}$ on $\mathbb{A}^{\mathbb{N}}$ by concatenations
denoted in the same way. 

We put a partial order on $\mathbb{A}^{*}\cup\mathbb{A}^{\mathbb{N}}$
by defining $\forall i\in\mathbb{A}^{*},\,\forall j\in\mathbb{A}^{*}\cup\mathbb{A}^{\mathbb{N}},\,i\leq j$
iff $\exists k\in\mathbb{A}^{*}\cup\mathbb{A}^{\mathbb{N}}$ with
$ik=j$, that is to say $i\leq j$ iff $i$ is a prefix of $j$. 

Given any $i\in\mathbb{A}^{*}$ we shall denote the length of $i$
by $\left|i\right|=n$, where $n$ is the unique integer with the
property $i\in\mathbb{A}^{n}$. The cylinder set in $\mathbb{A}^{\mathbb{N}}$
corresponding to $i$ is defined by $\left[i\right]=\left\{ j\in\mathbb{A}^{\mathbb{N}}:\,i<j\right\} $.
A finite set $\Pi\subset\mathbb{A}^{*}$ is called a \emph{section}
if $\bigcup\limits _{i\in\Pi}\left[i\right]=\mathbb{A}^{\mathbb{N}}$
and the union is a disjoint union.
\begin{defn}
\label{def:tree}A subset $T\subseteq\mathbb{A}^{*}$\emph{ }will
be called a\emph{ tree} with alphabet $\mathbb{A}$ if $\emptyset\in T$,
and for every $a\in T$, $\forall b\in\mathbb{A}^{*},\,b\leq a\implies b\in T$. 
\end{defn}

We shall denote $T_{n}=T\cap\mathbb{A}^{n}$ for every $n\geq1$,
and $T_{0}=\left\{ \emptyset\right\} $ so that $T=\bigcup\limits _{n\geq0}T_{n}$.
Moreover, for any section $\Pi\subset\mathbb{A}^{*}$ we denote $T_{\Pi}=T\cap\Pi$.
For every $a\in T$, we denote $W_{T}\left(a\right)=\left\{ i\in\mathbb{A}:\,ai\in T\right\} $.
The \emph{boundary} of a tree $T$ is denoted by $\partial T$ and
is defined by 
\[
\partial T=\left\{ a\in\mathbb{A}^{\mathbb{N}}:\,\forall n\in\mathbb{N},\,a_{1}...a_{n}\in T_{n}\right\} .
\]
A \emph{subtree} of $T$ is any tree $\tilde{T}$ in the alphabet
$\mathbb{A}$, such that $\tilde{T}\subseteq T$. 

The set of all trees with alphabet $\mathbb{A}$ will be denoted by
$\mathscr{\ensuremath{T}}_{\mathbb{A}}\subset2^{\mathbb{A}^{*}}$.
We define a metric on $\mathscr{\ensuremath{T}}_{\mathbb{A}}$ by
\[
d_{\mathbb{A}}\left(T,S\right)=2^{-\min\left\{ n:\,T_{n}\neq S_{n}\right\} }.
\]
Endowed with the metric $d_{\mathbb{A}}$, $\mathscr{\ensuremath{T}}_{\mathbb{A}}$
is a compact, separable, metric space. 

Given a finite tree $L\in\mathscr{\ensuremath{T}}_{\mathbb{A}}$,
let $\left[L\right]\subset\mathscr{\ensuremath{T}}_{\mathbb{A}}$
be defined by 
\[
\left[L\right]=\left\{ S\in\mathscr{\ensuremath{T}}_{\mathbb{A}}:\,\forall n\in\mathbb{N},\,L_{n}\neq\emptyset\implies S_{n}=L_{n}\right\} .
\]
These sets form a (countable) basis for the topology of $\mathscr{T}_{\mathbb{A}}$
and generate the Borel $\sigma$-algebra on $\mathscr{T}_{\mathbb{A}}$.

We continue with a few more definitions which will come in handy in
what follows. Given a tree $T\in\mathscr{T}_{\mathbb{A}}$ and a node
$a\in T$, we denote 
\[
T^{a}=\left\{ j\in\mathbb{A}^{*}:\,aj\in T\right\} \in\mathscr{T}_{\mathbb{A}},
\]
the\emph{ descendants tree of $a$}. The \emph{height} of a tree $T\subseteq\mathbb{A}^{*}$
is defined by 
\[
\text{height}\left(T\right)=\sup\left\{ n\in\mathbb{N}\setminus\left\{ 0\right\} :\,T_{n}\neq\emptyset\right\} ,
\]
and takes values in $\mathbb{N}\cup\left\{ \infty\right\} $. A basic
observation in this context is that $\forall T\in\mathscr{T}_{\mathbb{A}}$,
$\text{height}\left(T\right)=\infty\iff\partial T\neq\emptyset$.

In what follows, it will often make sense to restrict the discussion
to certain subspaces of $\mathscr{T}_{\mathbb{A}}$, namely, the subspace
of all infinite trees in the alphabet $\mathbb{A}$, which is defined
by 
\[
\mathscr{T}_{\mathbb{A}}^{\infty}=\left\{ T\in\mathscr{T}_{\mathbb{A}}:\,\text{height}\left(T\right)=\infty\right\} ,
\]
and the subspace of all trees in the alphabet $\mathbb{A}$ that have
no leaves, which is defined by
\[
\mathbb{\mathscr{T}}_{\mathbb{A}}^{\prime}=\left\{ T\in\mathscr{T}_{\mathbb{A}}:\,\forall v\in T,\,\exists w\in T,\,v<w\right\} .
\]

Clearly, $\mathbb{\mathscr{T}}_{\mathbb{A}}^{\prime}\subseteq\mathscr{T}_{\mathbb{A}}^{\infty}\subseteq\mathscr{T}_{\mathbb{A}}$,
where $\mathscr{T}_{\mathbb{A}}^{\infty},\,\mathbb{\mathscr{T}}_{\mathbb{A}}^{\prime}$
are both closed subsets of $\mathscr{T}_{\mathbb{A}}$ and hence,
equipped with the induced topology, are also compact. There is a natural
surjection $\mathscr{T}_{\mathbb{A}}^{\infty}\to\mathbb{\mathscr{T}}_{\mathbb{A}}^{\prime}$,
which assigns to every infinite tree $T\in\mathscr{T}_{\mathbb{A}}^{\infty}$
, the \emph{reduced tree}\footnote{This terminology is taken from \cite{Lyons2016}.}
with no leaves $T^{\prime}\in\mathbb{\mathscr{T}}_{\mathbb{A}}^{\prime}$,
which is defined by
\[
T^{\prime}=\left\{ v\in T:\,T^{v}\in\mathscr{T}_{\mathbb{A}}^{\infty}\right\} .
\]

\subsection{Hausdorff metric}

For any $x\in\mathbb{R}^{d}$ and $r>0$, we denote by $B_{r}\left(x\right)$
the open ball around $x$ of radius $r$, that is:
\[
B_{r}\left(x\right)=\left\{ y\in\mathbb{R}^{d}:\,\left\Vert x-y\right\Vert <r\right\} .
\]
Given any set $A\subseteq\mathbb{R}^{d}$ and $\varepsilon>0$, the
(closed) $\varepsilon$-neighborhood of $A$ is denoted by 
\[
A^{\left(\varepsilon\right)}=\left\{ x\in\mathbb{R}^{d}:\,\exists a\in A,\,\left\Vert x-a\right\Vert \leq\varepsilon\right\} .
\]
The interior of a set $A$ which is contained in some topological
space is denoted by $A^{\circ}$.

We now recall the definition of the Hausdorff metric and some of its
basic properties. 
\begin{defn}
Given two sets $A,B\subseteq\mathbb{R}^{d}$, we denote by $d_{H}\left(A,B\right)$
the Hausdorff distance between $A$ and $B$, given by
\[
d_{H}\left(A,B\right)=\inf\left\{ \varepsilon:\,A\subseteq B^{\left(\varepsilon\right)}\text{ and }B\subseteq A^{\left(\varepsilon\right)}\right\} .
\]

Given any non-empty compact set $F\subset\mathbb{R}^{d}$, we denote
by $\Omega_{F}$ the set of all non-empty compact subsets of $F$.
Restricted to $\Omega_{F}$, $d_{H}$ is a metric. Throughout this
paper $\Omega_{F}$ is always endowed with the topology induced by
the Hausdorff metric, which makes $\left(\Omega_{F},\,d_{H}\right)$
a complete, metric, compact space. However, in what follows, the particular
space $\Omega_{F}$ will not always be explicitly stated and will
only be present in the background, assuming that the set $F$ contains
all the relevant sets.
\end{defn}

We now state some basic lemmas regarding convergence of sequences
in the Hausdorff metric, in an appropriate space. The proofs are standard
and are left as exercises for the reader.
\begin{lem}
\label{lem:convergence of intersections of two sequences}Let $A_{n,}\,B_{n}$
be two sequences of nonempty compact sets in $\mathbb{R}^{d}$. Suppose
that $A_{n}\underset{n\to\infty}{\longrightarrow}A,\,\text{and }B_{n}\underset{n\to\infty}{\longrightarrow}B$
for some compact sets $A,B\subset\mathbb{R}^{d}$. Then every accumulation
point of the sequence $A_{n}\cap B_{n}$ is a subset of $A\cap B$.%
\end{lem}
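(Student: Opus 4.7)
The plan is to unpack the statement as follows. Let $W$ be an accumulation point of the sequence $(A_n \cap B_n)$ in the space of nonempty compact subsets of $\mathbb{R}^d$ with the Hausdorff metric (so in particular one has a subsequence along which $A_n \cap B_n$ is nonempty and converges to $W$). Pass to this subsequence and, abusing notation, still call it $(A_n \cap B_n)$, so that $A_n \cap B_n \to W$, while $A_n \to A$ and $B_n \to B$ continue to hold along the subsequence. The goal is to show $W \subseteq A \cap B$.

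First I would fix an arbitrary $x \in W$ and use the standard characterization of Hausdorff convergence: since $A_n \cap B_n \to W$ in $d_H$, there exist points $x_n \in A_n \cap B_n$ with $x_n \to x$. This is the one point where a little care is needed, but it follows from the defining inequality $d(x, A_n \cap B_n) \le d_H(W, A_n \cap B_n) \to 0$, from which one extracts such a sequence by a diagonal choice.

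Next, I would argue $x \in A$. For each $n$, since $x_n \in A_n$, we have $d(x_n, A) \le d_H(A_n, A)$, which tends to $0$ by hypothesis. Combined with $x_n \to x$, this yields $d(x, A) = 0$, and since $A$ is closed, $x \in A$. The same reasoning, applied to $B_n$ in place of $A_n$, gives $x \in B$. Hence $x \in A \cap B$, and since $x \in W$ was arbitrary, $W \subseteq A \cap B$.

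There is no real obstacle here; the only subtle point is the bookkeeping around passing to a subsequence so that all three sequences $(A_n)$, $(B_n)$, $(A_n \cap B_n)$ converge simultaneously (the first two by hypothesis, the third by the definition of accumulation point), and noting that the argument is vacuous on indices where $A_n \cap B_n$ happens to be empty, since the notion of accumulation point is taken in the space of nonempty compact sets.
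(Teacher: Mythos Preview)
Your proof is correct and follows essentially the same approach as the paper's: reduce to the case where $A_n\cap B_n\to W$, pick $x\in W$, and use the Hausdorff convergence of $A_n$ and $B_n$ to force $x\in A$ and $x\in B$. The paper phrases the last step as a contradiction via an open neighborhood of $x$ disjoint from $A$, whereas you argue directly via $d(x_n,A)\le d_H(A_n,A)\to 0$; these are equivalent formulations of the same idea.
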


\begin{rem}
Note that the accumulation points of the sequence $A_{n}\cap B_{n}$
could be much smaller than $\left(\lim\limits _{n\to\infty}A_{n}\right)\cap\left(\lim\limits _{n\to\infty}B_{n}\right)$.
For example $A_{n}=\left\{ \frac{a}{n}:\,a=0,1,...,n\right\} $ and
$B_{n}=\left(\alpha+A_{n}\text{ (mod 1)}\right)\cup\left\{ 0\right\} $,
for any irrational $\alpha\in\left[0,1\right]$. In this case $A_{n}\cap B_{n}=\left\{ 0\right\} $
for every $n$, but $\lim\limits _{n\to\infty}A_{n}=\lim\limits _{n\to\infty}B_{n}=\left[0,1\right]$.
\end{rem}

In the case where instead of the sequence $B_{n}$ we fix a constant
set $B$, changing the order of the intersection and taking the limit
can only cause a difference on the boundary of $B$.
\begin{lem}
\label{lem:convergence of intersection of a sequence and a compact set}Let
$A_{n}\subseteq\mathbb{R}^{d}$ be a sequence of nonempty compact
sets. Let $A,\,B\subset\mathbb{R}^{d}$ be nonempty compact sets,
and assume that $A_{n}\underset{n\to\infty}{\longrightarrow}A$. Suppose
that $W$ is an accumulation point of $A_{n}\cap B$, then 
\[
A\cap B^{\circ}\subseteq W\subseteq A\cap B.
\]
\end{lem}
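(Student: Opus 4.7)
The plan is to pass to a subsequence realizing the accumulation point and then verify the two inclusions separately. Concretely, by the definition of an accumulation point in the Hausdorff metric, fix a subsequence $n_k$ such that $A_{n_k}\cap B \converge W$ in $d_H$, and carry out the rest of the argument along this subsequence.

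The right-hand inclusion $W \subseteq A\cap B$ is essentially a direct invocation of Lemma \ref{lem:convergence of intersections of two sequences}: take the two sequences to be $A_{n_k}$ (which converges to $A$, being a subsequence of a convergent sequence) and the constant sequence $B_{n_k}=B$ (which converges to $B$). Since $W = \lim\limits_{k\to\infty} (A_{n_k}\cap B_{n_k})$, that lemma gives $W\subseteq A\cap B$.

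For the left-hand inclusion $A\cap B^\circ \subseteq W$, I would proceed by picking an arbitrary point $x\in A\cap B^\circ$ and constructing an approximating sequence from $A_{n_k}\cap B$. Since $A_{n_k}\to A$, for each $k$ there exists $x_k\in A_{n_k}$ with $x_k\to x$; this follows from the standard characterization of Hausdorff convergence that every point of the limit is reached by a sequence of points from the sets. Because $x\in B^\circ$, there is an open ball $B_r(x)\subseteq B$, so eventually $x_k\in B_r(x)\subseteq B$, which means $x_k\in A_{n_k}\cap B$ for all sufficiently large $k$. The Hausdorff limit $W$ contains every limit of such sequences (if $x\notin W$, then since $W$ is closed there would be a neighborhood of $x$ disjoint from $W$, contradicting $d_H(A_{n_k}\cap B,W)\to 0$), so $x\in W$.

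There is really no main obstacle here: the statement is a routine consequence of the definitions once one is careful to work along the chosen subsequence and to invoke the already-proved Lemma \ref{lem:convergence of intersections of two sequences}. The only delicate point worth double-checking is that the hypothesis $x\in B^\circ$ (rather than merely $x\in B$) is what guarantees that the approximating points $x_k$ land in $B$ from some stage onwards; this is precisely the obstruction illustrated by the remark following the previous lemma, which shows that passing to the closure $B$ instead of $B^\circ$ on the left-hand side is genuinely necessary.
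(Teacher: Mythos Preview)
Your proposal is correct and follows essentially the same approach as the paper: the right-hand inclusion is obtained by invoking Lemma~\ref{lem:convergence of intersections of two sequences} with the constant sequence $B$, and the left-hand inclusion by approximating any $x\in A\cap B^\circ$ with points $x_k\in A_{n_k}$ which must eventually lie in $B$ since $B^\circ$ is open. The only cosmetic difference is that you make the passage to a subsequence realizing $W$ explicit, which is a good clarification.
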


Note that the sequence $A_{n}\cap B$ need not converge, but the containment
of the sequence in a compact space guarantees the existence of accumulation
points, whenever $A_{n}\cap B\neq\emptyset$ for infinitely many values
of $n$.%

The difference on the boundary of the set $B$ may be eliminated by
approximating $B$ from the outside in the following way:
\begin{lem}
\label{lem: convergence of a sequence and thickenings of a set}Let
$A_{n,}\,B_{n}$ be two sequences of nonempty compact sets in $\mathbb{R}^{d}$,
such that $A_{n}\underset{n\to\infty}{\longrightarrow}A,\,\text{and }B_{n}\underset{n\to\infty}{\longrightarrow}B$.
Denote $\delta_{n}=d_{H}\left(A_{n},A\right)$, and assume that $B^{\left(\delta_{n}\right)}\subseteq B_{n}$
for every $n$. Then 
\[
A_{n}\cap B_{n}\underset{n\to\infty}{\longrightarrow}A\cap B.
\]
\end{lem}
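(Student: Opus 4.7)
The plan is to follow the strategy sketched in the author's commented-out proof, which reduces the claim to two inclusions via a standard subsequence argument. Specifically, the plan is to show that every convergent subsequence of $A_n \cap B_n$ in the Hausdorff metric must have $A \cap B$ as its limit; since the sequence lives in the compact space $\Omega_F$ for a sufficiently large compact $F$, this is enough to conclude that the full sequence converges to $A \cap B$.

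After passing to a subsequence, suppose $A_n \cap B_n \to W$ for some compact $W$. One direction, namely $W \subseteq A \cap B$, is immediate from Lemma \ref{lem:convergence of intersections of two sequences}, which states that every accumulation point of an intersection of two converging sequences lies inside the intersection of the limits. No part of the hypothesis $B^{(\delta_n)} \subseteq B_n$ is needed here; it is purely topological.

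For the reverse inclusion $A \cap B \subseteq W$, the plan is to use the thickening hypothesis in an essential way. Given $x \in A \cap B$, since $A_n \to A$ with $d_H(A_n, A) = \delta_n$, one can choose $a_n \in A_n$ with $\|a_n - x\| \leq \delta_n$. Because $x \in B$, this places $a_n$ inside $B^{(\delta_n)}$, and the hypothesis $B^{(\delta_n)} \subseteq B_n$ therefore forces $a_n \in B_n$. Thus $a_n \in A_n \cap B_n$, and $a_n \to x$, so $x \in W$. Combined with the previous paragraph this gives $W = A \cap B$.

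The main obstacle — really only a minor bookkeeping issue — is ensuring that $A_n \cap B_n$ is eventually nonempty so that Hausdorff convergence even makes sense. The very construction above produces points of $A_n \cap B_n$ whenever $A \cap B \neq \emptyset$, which covers the substantive case; the degenerate case where $A \cap B = \emptyset$ is vacuous within the framework of $\Omega_F$ and can be dismissed. The role of the hypothesis $B^{(\delta_n)} \subseteq B_n$ is to eliminate exactly the kind of pathology illustrated in the remark following Lemma \ref{lem:convergence of intersections of two sequences}, by guaranteeing that each point of $A \cap B$ can be approximated from within the intersecting sequence rather than lost on the boundary of $B$.
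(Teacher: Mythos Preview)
Your proof is correct and follows essentially the same approach as the paper's own argument: pass to a convergent subsequence, use Lemma~\ref{lem:convergence of intersections of two sequences} for the inclusion $W\subseteq A\cap B$, and then use the thickening hypothesis $B^{(\delta_n)}\subseteq B_n$ to produce points $a_n\in A_n\cap B_n$ approximating any given $x\in A\cap B$. Your additional remark on eventual nonemptiness of $A_n\cap B_n$ is a reasonable bookkeeping observation but does not alter the argument.
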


\begin{lem}
\label{lem:uniform convergence of functions preserve Hausdorff convergence}Let
$A_{n}$ be a sequence of nonempty compact sets in $\mathbb{R}^{d}$,
and assume that $A_{n}\underset{n\to\infty}{\longrightarrow}A\subseteq\rd$.
Let $C\subseteq\mathbb{R}^{d}$ be a compact neighborhood of $A$
(i.e., C is compact and $A\subseteq C^{\circ}$). Let $\left(f_{n}\right)_{n\in\mathbb{N}}$
be a sequence of maps of $\mathbb{R}^{d}$, and assume it converges
uniformly on $C$ to a continuous function $f$. Then $f_{n}\left(A_{n}\right)\converge f\left(A\right)$.

\end{lem}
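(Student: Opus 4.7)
The plan is to verify both directions of Hausdorff convergence by an $\varepsilon$--$\delta$ argument, with the compact neighborhood $C$ supplying uniform continuity of $f$ and uniform approximation of $f$ by the $f_n$'s simultaneously. Fix $\varepsilon > 0$. I would proceed in four preliminary steps before combining them.

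First, since $A \subseteq C^\circ$ is compact, the distance $\eta := \mathrm{dist}(A, \mathbb{R}^d \setminus C^\circ)$ is positive, so Hausdorff convergence $A_n \to A$ yields some $N_0$ with $A_n \subseteq A^{(\eta/2)} \subseteq C$ for all $n \geq N_0$. Second, the continuous function $f$ restricted to the compact set $C$ is uniformly continuous, so there is $\delta > 0$ such that $x,y \in C$ and $\|x - y\| < \delta$ imply $\|f(x) - f(y)\| < \varepsilon/2$; I may shrink $\delta$ so that $\delta \leq \eta/2$. Third, by uniform convergence of $f_n$ to $f$ on $C$, there is $N_1$ with $\sup_{x \in C} \|f_n(x) - f(x)\| < \varepsilon/2$ for $n \geq N_1$. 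Fourth, by $A_n \to A$ there is $N_2$ with $d_H(A_n, A) < \delta$ for $n \geq N_2$.

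Now let $n \geq \max(N_0, N_1, N_2)$. For the containment $f_n(A_n) \subseteq (f(A))^{(\varepsilon)}$, take any $a_n \in A_n$; by the choice of $N_2$ pick $a \in A$ with $\|a_n - a\| < \delta$. Then $a_n, a \in C$, so uniform continuity gives $\|f(a_n) - f(a)\| < \varepsilon/2$, while uniform convergence gives $\|f_n(a_n) - f(a_n)\| < \varepsilon/2$, so by the triangle inequality $\|f_n(a_n) - f(a)\| < \varepsilon$. The reverse containment $f(A) \subseteq (f_n(A_n))^{(\varepsilon)}$ is proved symmetrically: for $a \in A$ pick $a_n \in A_n$ with $\|a_n - a\| < \delta$ and apply the same two estimates to conclude $\|f_n(a_n) - f(a)\| < \varepsilon$. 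Hence $d_H(f_n(A_n), f(A)) < \varepsilon$ for all sufficiently large $n$.

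There is no real obstacle here; the only subtlety is the role of the neighborhood $C$. One needs $C$ both to ensure that $f$ is uniformly continuous on a set large enough to contain the eventual $A_n$'s, and to legitimately invoke the uniform convergence hypothesis at the points $a_n$. The openness condition $A \subseteq C^\circ$ is precisely what guarantees the eventual inclusion $A_n \subseteq C$; without it (e.g. if one only assumed $A \subseteq C$), the points $a_n$ could a priori escape $C$ and the argument would break down.
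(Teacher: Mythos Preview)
Your proof is correct and follows essentially the same approach as the paper's own argument: uniform continuity of $f$ on the compact set $C$, uniform convergence of $f_n$ to $f$ on $C$, the eventual inclusion $A_n\subseteq C$, and then the triangle inequality in both directions. Your explicit justification of why $A_n\subseteq C$ eventually (via the positive distance $\eta$ from $A$ to the complement of $C^\circ$) is a nice touch that the paper leaves implicit.
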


\subsection{Iterated function systems\label{subsec:Iterated-function-systems}}

\subsubsection{Basic definitions}

For a great introduction to fractal geometry, which covers iterated
function systems and their attractors, the unfamiliar readers are
referred to \cite{falconer2013fractal}.

An iterated function system (IFS) is a finite collection $\Phi=\left\{ \varphi_{i}\right\} _{i\in\Lambda}$
of contracting Lipschitz maps of $\mathbb{R}^{d}$, i.e., for each
$\varphi\in\Phi$ there exists some $r\in\left(0,1\right)$ such that
for every $x,y\in\mathbb{R}^{d}$, 
\[
\left\Vert \varphi\left(x\right)-\varphi\left(y\right)\right\Vert \leq r\cdot\left\Vert x-y\right\Vert .
\]
A basic result due to Hutchinson \cite{Hutchinson1981} states that
every IFS $\Phi=\left\{ \varphi_{i}\right\} _{i\in\Lambda}$, defines
a unique compact set $K\subseteq\mathbb{R}^{d}$, which is called
the \emph{attractor} of the IFS\emph{, }that satisfies\emph{
\[
K=\bigcup\limits _{i\in\Lambda}\varphi_{i}\left(K\right).
\]
}

Whenever the IFS is consisted of similarity maps, the attractor is
called \emph{self-similar }and $\Phi$ is referred to as a \emph{similarity
IFS.} This means that each $\varphi_{i}\in\Phi$ has the form $\varphi_{i}\left(x\right)=r_{i}\cdot O_{i}x+\alpha_{i}$
where $r_{i}\in\left(0,1\right)$ is called the \emph{contraction
ratio} of $\varphi_{i}$, $O_{i}$ is an orthogonal map, and $\alpha_{i}\in\mathbb{R}^{d}$.
A special case is when these similarity maps have trivial orthogonal
parts. Such maps are called \emph{homotheties}, and an attractor of
an IFS which is consisted of homotheties is called \emph{self-homothetic}.

In this paper we only deal with similarity IFSs, and so all IFSs in
this paper are assumed to be similarity IFSs, even when not explicitly
stated.

An important feature of attractors of IFSs is that their points may
be coded in a space of symbols. Suppose that $K$ is the attractor
of the IFS $\Phi=\left\{ \varphi_{i}\right\} _{i\in\Lambda}$, the
map $\gamma_{\Phi}:\Lambda^{\mathbb{N}}\to K$ which is defined by 

\[
\forall i=\left(i_{1},i_{2},...\right)\in\Lambda^{\mathbb{N}},\,\,\gamma_{\Phi}\left(i\right)=\lim_{n\rightarrow\infty}\varphi_{i_{1}}\circ\varphi_{i_{2}}\circ\cdots\circ\varphi_{i_{n}}(0),
\]

is called the \emph{coding map }of $\Phi$. The point $0$ was chosen
arbitrarily and any other base point chosen instead would yield exactly
the same function. The map $\gamma_{\Phi}$ is surjective (but in
general not 1-to-1) and continuous with respect to the product topology
on $\Lambda^{\mathbb{N}}$. 

Given a similarity IFS $\Phi=\left\{ \varphi_{i}\right\} _{i\in\Lambda}$,
for every finite sequence $i=i_{1}...i_{k}\in\Lambda^{*}$ ,we denote
$\varphi_{i}=\varphi_{i_{1}}\circ\cdots\circ\varphi_{i_{k}}$. We
also denote by $r_{i},\,O_{i},\alpha_{i}$ the scaling ratio, orthogonal
part, and translation of the similarity function $\varphi_{i}$ so
that $\varphi_{i}\left(x\right)=r_{i}\cdot O_{i}\left(x\right)+\alpha_{i}$,
in particular, $r_{i}=r_{i_{1}}\cdots r_{i_{k}}$. Sets of the form
$\varphi_{i}K$ for any $i\in\Lambda^{*}$ are called cylinder sets.
Note that for every $i\in\Lambda^{*}$, $\varphi_{i}K=\gamma_{\Phi}\left(\left[i\right]\right)$.

Since the contraction ratios of the functions in the IFS may be different,
it will often be useful to consider the sections $\Pi_{\rho}$ defined
by
\[
\Pi_{\rho}=\left\{ i\in\mathbb{A}^{*}:\,r_{i}\leq\rho<r_{i_{1}}\cdots r_{i_{\left|i\right|-1}}\right\} ,
\]
for any $\rho\in\left(0,r_{\min}\right)$, where $r_{\min}=\min\left\{ r_{i}:\,i\in\Lambda\right\} $.
Note that for every $i\in\Pi_{\rho}$,
\begin{equation}
\rho\cdot r_{\min}<r_{i}\leq\rho.\label{eq:bounds on contraction in sections}
\end{equation}

\subsubsection{Coding}

Given a tree $T\in\mathscr{T}_{\Lambda}$, one may apply the coding
map to the boundary of $T$ resulting in some compact subset of the
attractor $K$. One issue here is that if $T$ is a finite tree it
has an empty boundary and so an application of the coding map would
yield the empty set which is not a member of $\Omega_{K}$. Considering
only infinite trees, the coding map $\gamma_{\Phi}:\Lambda^{\mathbb{N}}\to K$
induces a function $\Gamma_{\Phi}:\mathscr{T}_{\Lambda}^{\infty}\to\Omega_{K}$
given by:

\[
\Gamma_{\Phi}\left(T\right)=\gamma_{\Phi}\left(\partial T\right).
\]

Note that $\Gamma_{\Phi}$ is a surjection%
, mapping the full tree $\Lambda^{*}$ to $K$. However, one should
note that for a tree $T\in\mathscr{T}_{\Lambda}^{\infty}$, nodes
of $T$ that have a finite line of descendants have no influence on
the image $\Gamma_{\Phi}\left(T\right)$. Therefore, it sometimes
makes sense to consider the restriction of $\Gamma_{\Phi}$ to the
subspace $\mathbb{\mathscr{T}}_{\Lambda}^{\prime}$. The map $\Gamma_{\Phi}$
restricted to the subspace $\mathbb{\mathscr{T}}_{\Lambda}^{\prime}$
has a much nicer behavior, in particular, as shown in the following
lemma, it is continuous. 
\begin{lem}
\label{lem:Projection of trees without leaves is continuous}The function
$\Gamma_{\Phi}\restriction_{\mathbb{\mathscr{T}}_{\Lambda}^{\prime}}:\mathbb{\mathscr{T}}_{\Lambda}^{\prime}\to\Omega_{\Phi}$
is continuous.
\end{lem}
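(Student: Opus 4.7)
The plan is to produce an explicit quantitative bound on $d_H\bigl(\Gamma_\Phi(T_n),\Gamma_\Phi(T)\bigr)$ in terms of $d_{\mathbb{A}}(T_n,T)$. Suppose $T_n,T\in\mathscr{T}_\Lambda^{\prime}$ with $T_n\to T$, and set $k_n=-\log_2 d_{\mathbb{A}}(T_n,T)$, so that $(T_n)_j=T_j$ for every $j\leq k_n$, and $k_n\to\infty$. Let $r:=\max_{i\in\Lambda}r_i<1$ and $D:=\mathrm{diam}(K)$; then for every $v\in\Lambda^{*}$ with $|v|=k$, the cylinder $\varphi_v(K)$ has diameter at most $r^k D$.

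The key observation, which is where the no-leaves hypothesis enters, is the following: for any $S\in\mathscr{T}_\Lambda^{\prime}$ and every $v\in S_k$, the node $v$ extends to some $i\in\partial S$, hence $\gamma_\Phi(i)\in\varphi_v(K)\cap\Gamma_\Phi(S)$. Combined with the trivial inclusion $\Gamma_\Phi(S)\subseteq\bigcup_{v\in S_k}\varphi_v(K)$ (valid for every $S\in\mathscr{T}_\Lambda^{\infty}$), this yields a cover of $\Gamma_\Phi(S)$ by cylinders of diameter $\leq r^k D$ in which every piece actually meets $\Gamma_\Phi(S)$. Applying this to $T_n$ and $T$ at level $k_n$, the equality $(T_n)_{k_n}=T_{k_n}$ shows that each of $\Gamma_\Phi(T_n)$ and $\Gamma_\Phi(T)$ is contained in $\bigcup_{v\in T_{k_n}}\varphi_v(K)$, and each piece in this common cover is met by both sets.

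From here the conclusion is immediate: any $x\in\Gamma_\Phi(T_n)$ lies in some $\varphi_v(K)$ with $v\in T_{k_n}$, and by the observation above the same cylinder contains some $y\in\Gamma_\Phi(T)$, giving $\|x-y\|\leq r^{k_n}D$. The symmetric estimate gives
\[
d_H\bigl(\Gamma_\Phi(T_n),\Gamma_\Phi(T)\bigr)\leq r^{k_n}D\underset{n\to\infty}{\longrightarrow}0,
\]
which proves continuity (in fact, uniform continuity on $\mathscr{T}_\Lambda^{\prime}$). There is no serious obstacle; the only subtle point is that the inclusion $\Gamma_\Phi(S)\subseteq\bigcup_{v\in S_k}\varphi_v(K)$ on its own would not control Hausdorff distance, because a cylinder in the cover could be irrelevant to $\Gamma_\Phi(S)$. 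The no-leaves assumption is exactly what rules this out, and it is also the reason why $\Gamma_\Phi$ fails to be continuous on the larger space $\mathscr{T}_\Lambda^{\infty}$.
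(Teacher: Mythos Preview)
Your proof is correct and follows essentially the same approach as the paper: both arguments use that agreement of the trees up to level $k$ gives a common cylinder cover by sets of diameter $\leq r^{k}\,\mathrm{diam}(K)$, and invoke the no-leaves hypothesis to guarantee that every node at level $k$ extends to a point of the boundary, so each cylinder in the cover actually meets both images. One trivial indexing slip: with $k_n=-\log_2 d_{\mathbb{A}}(T_n,T)$ you get $(T_n)_j=T_j$ for $j<k_n$, not $j\leq k_n$ (since $k_n=\min\{j:(T_n)_j\neq T_j\}$); replacing $k_n$ by $k_n-1$ throughout fixes this without affecting the argument.
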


\begin{proof}
Let $\left(T_{\left(n\right)}\right)_{n\in\mathbb{N}}$ be a converging
sequence of trees in $\mathbb{\mathscr{T}}_{\Lambda}^{\prime}$, and
denote $\lim T_{\left(n\right)}=T$. We need to show that $\Gamma_{\Phi}\left(T_{\left(n\right)}\right)\underset{n\to\infty}{\longrightarrow}\Gamma_{\Phi}\left(T\right)$.
Fix any $r_{\Phi}\in\left(0,1\right)$ which satisfies 
\[
\left\Vert \varphi_{i}\left(x\right)-\varphi_{i}\left(y\right)\right\Vert <r_{\Phi}\cdot\left\Vert x-y\right\Vert ,
\]
 for every $i\in\Lambda$ and $x,y\in\mathbb{R}^{d}$. Given $\varepsilon>0$,
fix some $l_{\varepsilon}\in\N$, such that $\left(r_{\Phi}\right)^{l_{\varepsilon}}<\varepsilon/\text{diam}\left(K\right)$.
This implies that for every $i\in\Lambda^{l_{\varepsilon}}$, the
diameter of the cylinder set $\gamma_{\Phi}\left[i\right]$ is smaller
than $\varepsilon$.

Since $T_{\left(n\right)}\converge T$, there is some $N_{\varepsilon}\in\mathbb{N}$,
such that whenever $n>N_{\varepsilon}$, $\left(T_{\left(n\right)}\right)_{t}=T_{t}$
for every $t=0,1,...,l_{\varepsilon}$. Assume now that $n>N_{\varepsilon}.$ 

For every $x\in\Gamma_{\Phi}\left(T\right)$, there is some $i\in\partial T$,
such that $x=\gamma_{\Phi}\left(i\right).$ Since $n>N_{\varepsilon}$,
there is some $j\in\partial T_{\left(n\right)}$ such that $j\in\left[i_{1}...i_{l_{\varepsilon}}\right]$.
Denote $y=\gamma_{\Phi}\left(j\right)\in\Gamma_{\Phi}\left(T_{\left(n\right)}\right)$.
Then $x,y\in\gamma_{\Phi}\left[i_{1}...i_{l_{\varepsilon}}\right]$,
and since $\gamma_{\Phi}\left[i_{1}...i_{l_{\varepsilon}}\right]$
has diameter smaller than $\varepsilon$, $\left\Vert x-y\right\Vert <\varepsilon$. 

On the other hand, exactly in the same way as above, for every $y\in\Gamma_{\Phi}\left(T_{\left(n\right)}\right)$,
there is some $j\in\partial T_{\left(n\right)}$, such that $y=\gamma_{\Phi}\left(j\right)$.
Since $n>N_{\varepsilon}$, there is some $i\in\partial T$ such that
$i\in\left[j_{1}...j_{l_{\varepsilon}}\right]$, and denoting $x=\gamma_{\Phi}\left(i\right)$,
we have found an $x\in\Gamma_{\Phi}\left(T\right)$, such that $\left\Vert x-y\right\Vert <\varepsilon$.
\end{proof}
\begin{rem}
Note that $\Gamma_{\Phi}\restriction_{\mathbb{\mathscr{T}}_{\Lambda}^{\prime}}$
still need not be 1-to-1, which follows from the fact that $\gamma_{\Phi}$
is not 1-to-1 in general.
\end{rem}

Now, since $\partial T=\partial T^{\prime}$ for every tree $T\in\mathscr{T}_{\Lambda}^{\infty}$,
we may decompose $\Gamma_{\Phi}:\mathscr{T}_{\Lambda}^{\infty}\to\Omega_{K}$
into $T\mapsto T^{\prime}\mapsto\Gamma_{\Phi}\left(T^{\prime}\right)$.
We have already shown that the second part, $T^{\prime}\mapsto\Gamma_{\Phi}\left(T^{\prime}\right)$,
is continuous. However, the map $T\mapsto T^{\prime}$ is not continuous.
For example, one can pick any infinite sequence $i\in\Lambda^{\mathbb{N}}$,
with the corresponding ray $S=\left\{ \emptyset\right\} \cup\bigcup\limits _{k=1}^{\infty}\left\{ i_{1}\dots i_{k}\right\} $,
and consider the sequence of infinite trees $\left(T_{\left(n\right)}\right)_{n\in\mathbb{N}}$
defined by $T_{\left(n\right)}=S\cup\bigcup\limits _{k=0}^{n}\Lambda^{k}$.
Obviously, $T_{\left(n\right)}\converge\Lambda^{*}$, while for every
$n$, $\left(T_{\left(n\right)}\right)^{\prime}=S$. 

On the bright side, the map $T\mapsto T^{\prime}$ is Borel (which
is the content of the following proposition), and therefore, so is
the map $\Gamma_{\Phi}$.
\begin{prop}
The map $R:\mathscr{T}_{\Lambda}^{\infty}\to\mathbb{\mathscr{T}}_{\Lambda}^{\prime}$
, defined by $R\left(T\right)=T^{\prime}$ is Borel.
\end{prop}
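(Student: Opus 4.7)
The plan is to check the Borel measurability by pulling back a basis for the topology of $\mathscr{T}_{\Lambda}^{\prime}$, namely the cylinder sets $[L]$ for finite trees $L$. Since these sets form a countable basis and generate the Borel $\sigma$-algebra, it suffices to show that $R^{-1}([L])$ is Borel in $\mathscr{T}_{\Lambda}^{\infty}$ for every finite tree $L\subset\Lambda^{*}$.

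The key preliminary step is to analyze, for each single word $v\in\Lambda^{*}$, the set
\[
A_{v}=\{T\in\mathscr{T}_{\Lambda}^{\infty}:v\in T^{\prime}\}.
\]
By definition $v\in T^{\prime}$ means $v\in T$ and $T^{v}\in\mathscr{T}_{\Lambda}^{\infty}$, i.e.\ for every $n\in\N$ there exists $w\in\Lambda^{n}$ with $vw\in T$. For fixed $v$ and $n$, the condition ``there exists $w\in\Lambda^{n}$ with $vw\in T$'' depends only on $T\cap\Lambda^{\leq|v|+n}$, so the set of trees satisfying it is a finite union of basic cylinders $[L]$ and hence clopen. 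Intersecting over $n\in\N$ shows that $A_{v}$ is a $G_{\delta}$ set, and therefore Borel. Consequently its complement (within $\mathscr{T}_{\Lambda}^{\infty}$) is $F_{\sigma}$, and so the set $\{T:v\notin T^{\prime}\}$ is Borel as well.

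Now fix a finite tree $L\subset\Lambda^{*}$, and let $N=\mathrm{height}(L)$. Recall that $T^{\prime}\in[L]$ if and only if $(T^{\prime})_{n}=L_{n}$ for every $1\leq n\leq N$ with $L_{n}\neq\emptyset$. For each such $n$, the condition $(T^{\prime})_{n}=L_{n}$ can be rewritten as
\[
\bigcap_{v\in L_{n}}A_{v}\;\cap\;\bigcap_{v\in\Lambda^{n}\setminus L_{n}}A_{v}^{c},
\]
which is a finite intersection of Borel sets and therefore Borel. Intersecting over the finitely many relevant values of $n$ we conclude that
\[
R^{-1}([L])=\{T\in\mathscr{T}_{\Lambda}^{\infty}:T^{\prime}\in[L]\}
\]
is Borel, as desired.

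There is no serious obstacle in this argument; the only point requiring care is the first step, where one must verify that ``having an infinite descendant line at $v$'' is genuinely a $G_{\delta}$ property rather than something merely analytic. This is exactly where the observation that ``some descendant of $v$ at depth $n$ survives'' is a clopen condition becomes essential, since it is this that reduces the expression of $A_{v}$ to a countable intersection of clopen sets and so yields the Borel (indeed $G_{\delta}$) structure of $R^{-1}([L])$.
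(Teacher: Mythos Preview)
Your proof is correct and follows essentially the same strategy as the paper: pull back the basic cylinders $[L]$ and reduce the question to showing that, for each fixed $v\in\Lambda^{*}$, the condition ``$v\in T'$'' defines a Borel set of trees. Your execution is in fact somewhat cleaner than the paper's: you observe directly that $A_{v}$ is $G_{\delta}$ (as a countable intersection of the clopen conditions ``$vw\in T$ for some $w\in\Lambda^{n}$''), whereas the paper proves the analogous claim via a more roundabout decomposition and then proceeds by an unnecessary induction on the height of $L$ rather than intersecting over all levels at once.
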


\begin{proof}
A basic open set in $\mathbb{\mathscr{T}}_{\Lambda}^{\prime}$ is
of the form $\left[F\right]\cap\mathbb{\mathscr{T}}_{\Lambda}^{\prime}$
for a finite tree $F$. Note that in order for the intersection to
not be empty, $F$ must be a prefix of some tree in $\mathbb{\mathscr{T}}_{\Lambda}^{\prime}$,
i.e., each vertex $v\in F$ with $\left|v\right|<\text{height}\left(F\right)$
must have at least one child. It is enough%
{} to show that for such a set, $R^{-1}\left(\left[F\right]\right)$
is Borel %
.

This may be proved by induction on the height of $F$. For $F=\emptyset$,
$R^{-1}\left(\left[F\right]\right)=\mathscr{T}_{\Lambda}^{\infty}$.
For the induction step we need the following claim:
\begin{claim*}
Given any $v\in\Lambda^{*}$, the set $\mathscr{V}_{v}=\left\{ T\in\mathscr{T}_{\Lambda}:\,v\in T\,\text{and}\,T^{v}\in\mathscr{T}_{\Lambda}^{\infty}\right\} $
is Borel, and the same thing holds for the set $\mathscr{W}_{v}=\left\{ T\in\mathscr{T}_{\Lambda}:\,v\in T\,\text{and}\,T^{v}\notin\mathscr{T}_{\Lambda}^{\infty}\right\} $.
\end{claim*}
\begin{proof}[Proof of Claim]
Denote
\[
\mathscr{S}_{v}=\left\{ L\in\mathscr{T}_{\Lambda}\setminus\mathscr{T}_{\Lambda}^{\infty}:\,v\in L,\,\text{height}\left(L^{v}\right)+\left|v\right|<\text{height}\left(L\right)\right\} .
\]
This is a countable collection since the set of all finite trees itself
is countable. Note that 
\[
\mathscr{W}_{v}=\left(\bigcup_{L\in\mathscr{S}_{v}}\left[L\right]\right)\cup\left\{ T\in\mathscr{T}_{\Lambda}\setminus\mathscr{T}_{\Lambda}^{\infty}:\,v\in T\right\} .
\]
Since $\bigcup\limits _{L\in\mathscr{S}_{v}}\left[L\right]$ is an
open set and $\left\{ T\in\mathscr{T}_{\Lambda}\setminus\mathscr{T}_{\Lambda}^{\infty}:\,v\in T\right\} $
is countable (and hence Borel), $\mathscr{W}_{v}$ is indeed a Borel
set. Moreover, since the set $\left\{ T\in\mathscr{T}_{\Lambda}:\,v\in T\right\} $
is Borel (in fact, it is open)%
, and 
\[
\mathscr{V}_{v}=\left\{ T\in\mathscr{T}_{\Lambda}:\,v\in T\right\} \backslash\mathscr{W}_{v},
\]
$\mathscr{V}_{v}$ is also a Borel set.
\end{proof}
Now, for the induction step, we assume that $R^{-1}\left(\left[F\right]\right)$
is Borel for every $F$ of height $n-1$. Given a finite tree $F$
of height $n$, denote $F^{-}=\bigcup\limits _{k=0}^{n-1}F_{k}$,
then for a tree $T\in\mathscr{T}_{\Lambda}^{\infty}$, $T^{\prime}\in\left[F\right]$
iff $T^{\prime}\in\left[F^{-}\right]$, $F_{n}\subseteq T$, and for
every $v\in T_{n}$, $T^{v}\in\mathscr{T}_{\Lambda}^{\infty}$ iff
$v\in F_{n}$. To formulate this, 
\[
\begin{array}{lcl}
R^{-1}\left(\left[F\right]\right) & = & R^{-1}\left(\left[F^{-}\right]\right)\cap\\
 &  & \left(\bigcap_{v\in F_{n}}\left\{ T\in\mathscr{T}_{\Lambda}:\,v\in T\,\text{and}\,T^{v}\in\mathscr{T}_{\Lambda}^{\infty}\right\} \right)\cap\\
 &  & \left(\bigcap_{v\in\Lambda^{n}\backslash F_{n}}\left(\left\{ T\in\mathscr{T}_{\Lambda}:\,v\notin T\right\} \cup\left\{ T\in\mathscr{T}_{\Lambda}:\,v\in T\,\text{and}\,T^{v}\notin\mathscr{T}_{\Lambda}^{\infty}\right\} \right)\right)\\
 & = & R^{-1}\left(\left[F^{-}\right]\right)\cap\left(\bigcap\limits _{v\in F_{n}}\mathscr{V}_{v}\right)\cap\left(\bigcap\limits _{v\in\Lambda^{n}\backslash F_{n}}\left(\left\{ T\in\mathscr{T}_{\Lambda}:\,v\notin T\right\} \cup\mathscr{W}_{v}\right)\right)
\end{array}
\]
Now, using the assumption that $R^{-1}\left(\left[F^{-}\right]\right)$
is Borel and the claim above, it may be easily seen that $R^{-1}\left(\left[F\right]\right)$
is Borel.
\end{proof}
\begin{prop}
\label{prop:Gamma_=00005CPhi is Borel}The map $\Gamma_{\Phi}:\mathscr{T}_{\Lambda}^{\infty}\to\Omega_{K}$
is Borel.
\end{prop}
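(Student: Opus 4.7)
The plan is to express $\Gamma_\Phi$ as a composition of two maps whose measure-theoretic properties have already been established, and then invoke the standard fact that the composition of a Borel map with a continuous map is Borel.

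More concretely, I would first observe that for every $T\in\mathscr{T}_\Lambda^\infty$, the reduced tree $T^\prime$ has the same boundary as $T$, i.e., $\partial T=\partial T^\prime$. This follows directly from the definition of $T^\prime$: removing nodes that have only a finite line of descendants does not affect the infinite rays through the tree. Consequently, $\Gamma_\Phi(T)=\gamma_\Phi(\partial T)=\gamma_\Phi(\partial T^\prime)=\Gamma_\Phi(T^\prime)$, and so
\[
\Gamma_\Phi \;=\; \bigl(\Gamma_\Phi\!\restriction_{\mathscr{T}_\Lambda^\prime}\bigr)\circ R,
\]
where $R:\mathscr{T}_\Lambda^\infty\to\mathscr{T}_\Lambda^\prime$ is the reduction map $T\mapsto T^\prime$ from the previous proposition.

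The second step is to apply what is already at hand: $R$ is Borel by the preceding proposition, and $\Gamma_\Phi\!\restriction_{\mathscr{T}_\Lambda^\prime}$ is continuous by Lemma~\ref{lem:Projection of trees without leaves is continuous}. Thus, for any open $U\subseteq\Omega_K$, the preimage $(\Gamma_\Phi\!\restriction_{\mathscr{T}_\Lambda^\prime})^{-1}(U)$ is open in $\mathscr{T}_\Lambda^\prime$, and then $R^{-1}$ of this open set is Borel in $\mathscr{T}_\Lambda^\infty$. This gives $\Gamma_\Phi^{-1}(U)$ is Borel for every open $U$, which is exactly what is needed.

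There is essentially no obstacle; the only point that requires the tiniest bit of care is the identity $\partial T=\partial T^\prime$ which makes the factorization through $R$ valid. Everything else is formal composition of a Borel map with a continuous map, so the proof will be just a few lines.
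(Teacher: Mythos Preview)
Your proposal is correct and follows exactly the approach in the paper: the paper explicitly notes that $\partial T=\partial T'$, decomposes $\Gamma_\Phi$ as $T\mapsto T'\mapsto\Gamma_\Phi(T')$, proves that $T\mapsto T'$ is Borel in the preceding proposition, and combines this with the continuity of $\Gamma_\Phi\!\restriction_{\mathscr{T}_\Lambda'}$ to conclude. The proposition itself is stated without a separate proof, since the argument is given in the surrounding discussion.
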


\subsubsection{Separation conditions\label{subsec:Separation-conditions}}

Recall that given an IFS $\Phi=\left\{ \varphi_{i}\right\} _{i\in\Lambda}$,
the attractor of the IFS is the unique compact set $K$ which satisfies
$K=\bigcup\limits _{i\in\Lambda}\varphi_{i}K$. Of course, in general,
the sets $\varphi_{i}K$ may intersect each other, and so there are
several ``separation properties'' dealing with the size of these
intersections. Here are the two most commonly used separation conditions:
\begin{defn}
\label{def:OSC and SSC}Given an IFS $\Phi=\left\{ \varphi_{i}\right\} _{i\in\Lambda}$
whose attractor is denoted by $K$, we define the following properties:
\begin{enumerate}
\item $\Phi$ satisfies the \emph{strong separation condition (SSC)} if
for every $i\neq j\in\Lambda,\,\varphi_{i}\left(K\right)\cap\varphi_{j}\left(K\right)=\emptyset$.
\item $\Phi$ satisfies the \emph{open set condition (OSC)} if there exists
a non-empty open set $U\subseteq\mathbb{R}^{d}$, such that $\varphi_{i}U\subseteq U$
for every $i\in\Lambda$, and for every $i\neq j\in\Lambda,\,\varphi_{i}U\cap\varphi_{j}U=\emptyset$.
In that case we call $U$ an \emph{OSC set.}
\end{enumerate}
\end{defn}

Note that whenever $U$ is an OSC set, $K\subseteq\overline{U}$.
Roughly speaking, the OSC says that two sets $\varphi_{i}\left(K\right),\varphi_{j}\left(K\right)$
for $i\neq j\in\Lambda$ may only intersect on their boundaries. The
SSC is clearly stronger than the OSC. In fact, the following holds:
\begin{lem}
\label{lem:SSC implies OSC set containing the attractor}A similarity\textup{\emph{
IFS}} $\Phi=\left\{ \varphi_{i}\right\} _{i\in\Lambda}$ with an attractor
$K$, satisfies the SSC iff it has an OSC set $U$ such that $K\subseteq U$.%
\end{lem}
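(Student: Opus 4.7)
The plan is to prove both directions separately, with essentially all the work in the forward direction. The reverse implication is immediate: if $U$ is an OSC set with $K\subseteq U$, then $\varphi_{i}(K)\subseteq\varphi_{i}(U)$ for each $i\in\Lambda$, and the pairwise disjointness of the $\varphi_{i}(U)$ guaranteed by the OSC yields at once that the $\varphi_{i}(K)$ are pairwise disjoint, i.e.\ the SSC.

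For the forward direction, I would construct an explicit OSC set by slightly fattening the attractor. Given $\delta>0$, define $U_{\delta}=\bigcup_{x\in K}B_{\delta}(x)$, the open $\delta$-neighborhood of $K$; this is open, non-empty, and contains $K$. It remains to verify that for all sufficiently small $\delta$ the set $U_{\delta}$ satisfies the two requirements of an OSC set. The invariance $\varphi_{i}(U_{\delta})\subseteq U_{\delta}$ follows from contractivity alone: any $x\in U_{\delta}$ is within $\delta$ of some $y\in K$, so $\varphi_{i}(x)$ lies within $r_{i}\delta<\delta$ of $\varphi_{i}(y)\in K$.

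For disjointness of the images $\varphi_{i}(U_{\delta})$, the key observation is that since $\varphi_{i}$ is a similarity with ratio $r_{i}$, the image $\varphi_{i}(U_{\delta})$ is precisely the open $r_{i}\delta$-neighborhood of $\varphi_{i}(K)$. The SSC together with compactness of each $\varphi_{i}(K)$ and finiteness of $\Lambda$ provides $\eta:=\min_{i\neq j\in\Lambda}\mathrm{dist}(\varphi_{i}(K),\varphi_{j}(K))>0$, so any choice of $\delta<\eta/2$ makes the thickenings $\varphi_{i}(U_{\delta})$ and $\varphi_{j}(U_{\delta})$ disjoint for $i\neq j$. There is essentially no obstacle here; the only subtlety worth flagging is that the identification of $\varphi_{i}(U_{\delta})$ with a metric thickening of $\varphi_{i}(K)$ uses that the maps are similarities rather than merely general contractions, which is consistent with the standing assumption of the paper.
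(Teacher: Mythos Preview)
Your proof is correct and follows essentially the same approach as the paper: both directions are handled identically, with the forward direction constructing the open $\delta$-neighborhood $U_{\delta}$ of $K$, verifying invariance via contractivity, and using the SSC to obtain disjointness of the images for small $\delta$. Your write-up is slightly more explicit in quantifying the required $\delta$ via $\eta=\min_{i\neq j}\mathrm{dist}(\varphi_{i}(K),\varphi_{j}(K))$, but the argument is the same.
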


A stronger condition than the OSC is the so called \emph{strong OSC}
which assumes the existence of an OSC set $U$ such that $U\cap K\neq\emptyset$
. It was shown by Schief \cite{schief1994separation} that for similarity\footnote{The same was shown later for conformal IFSs in \cite{Peres2001equivalence},
and for semi-conformal IFSs in \cite{KaenmakiVilppolainen2008}} IFSs the OSC and the strong OSC are equivalent.

One obvious obstruction for the OSC to hold is the existence of overlaps
in the IFS, i.e., the existence of $i\neq j\in\Lambda^{*}$ such that
$\varphi_{i}=\varphi_{j}$. Aiming to generalize the OSC to allow
overlaps, Lau and Ngai defined a weaker condition called the \emph{weak
separation condition (WSC). }The definition given below is in general
slightly stronger than the original definition given in \cite{LauNgai1999}.
However, it was shown in \cite{Zerner1996weak} that when the attractor
of the IFS is not contained in an affine hyperplane these two definitions
are equivalent. 
\begin{defn}
We say that an IFS $\Phi=\left\{ \varphi_{i}\right\} _{i\in\Lambda}$
satisfies the \emph{weak separation condition (WSC)} if there exists
some $c\in\mathbb{N}$, such that for every $\rho>0$, every ball
$B$ of radius $\rho$ intersects at most $c$ different cylinder
sets $\varphi_{i}\left(K\right)$ for $i\in\Pi_{\rho}$, i.e., 
\[
\left|\left\{ \varphi_{i}\left(K\right):\,\varphi_{i}\left(K\right)\cap B\neq\emptyset,\,i\in\Pi_{\rho}\right\} \right|\leq c,
\]
 where identical cylinder sets $\varphi_{i}\left(K\right)=\varphi_{j}\left(K\right)$
for $i\neq j$ are only counted once.
\end{defn}

It is known (see \cite{Zerner1996weak}) that similarity IFSs satisfy
the OSC if and only if they satisfy the WSC (as defined here) and
they have no overlaps. 
\begin{lem}
\label{lem:SSC and cube intersecting cylinder sets}Let $\Phi=\left\{ \varphi_{i}\right\} _{i\in\Lambda}$
be a similarity IFS in $\mathbb{R}^{d}$ satisfying SSC. Then there
is some constant $\tau>0$ s.t. for every cube $C$ of side-length
$\rho$ small enough, C intersects at most one cylinder set $\varphi_{i}\left(K\right)$
such that $i\in\Pi_{\tau\rho}$.
\end{lem}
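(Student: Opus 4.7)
The plan is to exploit SSC in order to bound from below the distance between any two distinct cylinder sets at level $\Pi_{\tau\rho}$, and then choose $\tau$ large enough so that this lower bound exceeds the diameter $\sqrt{d}\,\rho$ of a cube of side-length $\rho$.

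First I would extract from SSC the base-level separation constant
\[
\delta=\min_{i\neq j\in\Lambda}\mathrm{dist}\bigl(\varphi_i(K),\varphi_j(K)\bigr)>0,
\]
which is strictly positive because the sets $\varphi_i(K)$ are pairwise disjoint and compact. (Alternatively, Lemma \ref{lem:SSC implies OSC set containing the attractor} provides an OSC set $U$ with $K\subseteq U$; either route works.)

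Next, I would show that for $i\neq j$ both in $\Pi_{\tau\rho}$, neither is a prefix of the other. Indeed, if say $i$ were a strict prefix of $j$, writing $j=ij'$ with $|j'|\geq 1$, the definition of $\Pi_{\tau\rho}$ forces $\tau\rho<r_{j_1\cdots j_{|j|-1}}$, but $r_{j_1\cdots j_{|j|-1}}$ is bounded above by $r_i\leq\tau\rho$, a contradiction. Consequently, the maximal common prefix $w$ of $i$ and $j$ satisfies $|w|<|i|$ and $|w|<|j|$, so we may write $i=wi'$, $j=wj'$ with $i'_1\neq j'_1$. Since $w$ is a prefix of $i_1\cdots i_{|i|-1}$, one has $r_w\geq r_{i_1\cdots i_{|i|-1}}>\tau\rho$. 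Using that $\varphi_i(K)\subseteq\varphi_{wi'_1}(K)$, $\varphi_j(K)\subseteq\varphi_{wj'_1}(K)$, and that $\varphi_w$ is a similarity with ratio $r_w$, I obtain
\[
\mathrm{dist}\bigl(\varphi_i(K),\varphi_j(K)\bigr)\geq r_w\cdot\mathrm{dist}\bigl(\varphi_{i'_1}(K),\varphi_{j'_1}(K)\bigr)\geq r_w\,\delta>\tau\rho\,\delta.
\]

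Now set, for instance, $\tau=2\sqrt{d}/\delta$. Then for any two distinct $i,j\in\Pi_{\tau\rho}$ the cylinders $\varphi_i(K)$ and $\varphi_j(K)$ lie at distance strictly greater than $\sqrt{d}\,\rho$, which is the diameter of a cube of side-length $\rho$. Thus no such cube can meet two distinct cylinders $\varphi_i(K)$ with $i\in\Pi_{\tau\rho}$. The condition that $\rho$ be small enough is precisely $\tau\rho<r_{\min}$, ensuring $\Pi_{\tau\rho}$ is well-defined as in the preceding paragraph.

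The only mildly delicate step is the prefix analysis in the second paragraph — verifying that neither index is a prefix of the other, and from this extracting the lower bound $r_w>\tau\rho$ on the common prefix. Everything else is a direct consequence of the similarity structure and SSC.
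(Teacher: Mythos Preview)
Your proof is correct and follows essentially the same approach as the paper: define $\delta=\min_{i\neq j}\mathrm{dist}(\varphi_i(K),\varphi_j(K))$, bound below the pairwise distance between cylinders on $\Pi_{\tau\rho}$ by a quantity of order $\tau\rho\delta$, and choose $\tau$ so that this exceeds the cube's diameter $\sqrt{d}\,\rho$. The paper compresses the prefix analysis into an ``it is not hard to see'' and arrives at $\tau=\sqrt{d}/(\delta\, r_{\min})$, whereas your explicit argument yields the slightly sharper $\tau=2\sqrt{d}/\delta$; the difference is immaterial.
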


\begin{proof}
Denote $\delta=\min\limits _{i\neq j\in\Lambda}\text{dist}\left(\varphi_{i}\left(K\right),\varphi_{j}\left(K\right)\right)$,
where $K$ is the attractor of $\Phi$. It is not hard to see that
the cylinder sets on the section $\Pi_{\frac{\rho\cdot\sqrt{d}}{\delta\cdot r_{\min}}}$
are of distance larger than $\rho\cdot\sqrt{d}$ from each other,
and since $\rho\cdot\sqrt{d}$ is exactly the diameter of the cube
$C$, it cannot intersect mote than one cylinder set $\varphi_{i}\left(K\right)$
for $i\in\Pi_{\frac{\rho\cdot\sqrt{d}}{\delta\cdot r_{\min}}}$. Hence,
the claim is satisfied for $\tau=\frac{\sqrt{d}}{\delta\cdot r_{\min}}$.
\end{proof}
As mentioned above the SSC is stronger than the OSC (follows e.g.
from Lemma \ref{lem:SSC implies OSC set containing the attractor}),
and it is not hard to show that these two properties are not equivalent.
The relations between the separation conditions that were presented
in this subsection are summarized in the following lemma:
\begin{lem}
\label{lem:Separation conditions hierarchy}For similarity IFSs, the
separation conditions discussed above satisfy the following hierarchy:
\[
\text{SSC }\begin{array}{c}
\nLeftarrow\\
\Rightarrow
\end{array}\begin{array}{c}
\text{OSC}\\
\Updownarrow\\
\text{strong OSC}
\end{array}\begin{array}{c}
\nLeftarrow\\
\Rightarrow
\end{array}\text{ WSC}
\]
\end{lem}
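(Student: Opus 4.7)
The plan is to verify each of the four arrows in the diagram separately. The two positive implications are essentially self-contained and short; the equivalence OSC $\Leftrightarrow$ strong OSC is Schief's theorem \cite{schief1994separation}, which is already cited in the excerpt and should be invoked directly without reproof. For the two non-implications one needs explicit counterexamples.

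For SSC $\Rightarrow$ OSC, the implication is immediate from Lemma~\ref{lem:SSC implies OSC set containing the attractor}: SSC produces an OSC set $U$ with $K\subseteq U$, and in particular an OSC set exists. To see the implication is strict, I would exhibit the standard doubling IFS $\Phi=\{x\mapsto x/2,\, x\mapsto x/2+1/2\}$ on $\mathbb{R}$ whose attractor is $[0,1]$: the set $U=(0,1)$ verifies the OSC, while $\varphi_0(K)\cap\varphi_1(K)=\{1/2\}\neq\emptyset$, so SSC fails.

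The substantive implication is OSC $\Rightarrow$ WSC, which I would prove by a volume-packing argument. Fix an OSC set $U$ and pick any open ball $B(x_0,r_0)\subseteq U$. For each $i\in\Pi_\rho$ the image $\varphi_i(U)$ contains the ball $\varphi_i(B(x_0,r_0))$ of radius $r_0 r_i > r_0 r_{\min}\rho$ by Equation~(\ref{eq:bounds on contraction in sections}), and these open sets are pairwise disjoint across distinct cylinders because of the OSC. On the other hand, if a ball $B$ of radius $\rho$ meets $\varphi_i(K)\subseteq\overline{\varphi_i(U)}$, then $\varphi_i(U)$ lies entirely inside the ball of radius $\rho(1+r_i\cdot\mathrm{diam}(U))\leq\rho(1+\mathrm{diam}(U))$ centered at any fixed point of $B$. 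Comparing the $d$-dimensional Lebesgue measures of the disjoint inner balls packed inside this enclosing ball yields a uniform bound $c=c(d,r_0,r_{\min},\mathrm{diam}(U))$ on the number of distinct cylinder sets $\varphi_i(K)$ with $i\in\Pi_\rho$ hitting $B$, which is exactly the WSC.

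For WSC $\not\Rightarrow$ OSC I would give a minimal overlapping example: take $\Lambda=\{1,2\}$ with $\varphi_1=\varphi_2\colon x\mapsto x/2$ on $\mathbb{R}$. Since every $\varphi_i$ with $i\in\Pi_\rho$ gives the same cylinder set, the WSC holds trivially with $c=1$, but $\Phi$ has exact overlaps, so OSC fails by the equivalence of \cite{Zerner1996weak} stated just before the lemma (OSC is equivalent to WSC plus no overlaps). The main obstacle, such as it is, lies in the OSC $\Rightarrow$ WSC step: one must ensure the lower bound on the radii of the disjoint inner balls is comparable to $\rho$ uniformly in $\rho$, which is precisely what the two-sided control $\rho r_{\min}<r_i\leq\rho$ on cylinder sections supplies, and one must remember to count coincident cylinders only once so that the disjointness of the $\varphi_i(U)$ across indices is not violated by overlaps (which is why the lemma controls the number of distinct cylinder sets rather than the number of indices).
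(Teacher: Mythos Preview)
Your argument is correct, and in fact you give considerably more detail than the paper does: in the text this lemma is presented as a summary of the preceding discussion, with the equivalence OSC $\Leftrightarrow$ strong OSC deferred to Schief and the relation between OSC and WSC deferred to Zerner's characterization (OSC $\iff$ WSC and no overlaps), without spelling out the volume-packing argument or explicit counterexamples. Your choice to prove OSC $\Rightarrow$ WSC directly via packing disjoint balls $\varphi_i(B(x_0,r_0))$ inside an enclosing ball of radius comparable to $\rho$ is the standard route and is perfectly fine here.

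Two minor points. First, there is an arithmetic slip in your containment estimate: the radius of the enclosing ball should be $\rho + r_i\cdot\mathrm{diam}(U)$, not $\rho(1+r_i\cdot\mathrm{diam}(U))$; since $r_i\leq\rho$ this is still $\leq\rho(1+\mathrm{diam}(U))$, so your final bound is unaffected. Second, your closing remark about counting coincident cylinders is unnecessary in this direction: under the OSC the sets $\varphi_i(U)$ for $i\in\Pi_\rho$ are already pairwise disjoint as indexed (elements of a section are incomparable), so the packing bounds the number of indices, not merely the number of distinct cylinder sets. The caution you describe is relevant to the \emph{definition} of WSC, not to the proof that OSC implies it.
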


\subsection{Assouad and lower dimensions\label{subsec:Assouad and lower dimensions}}

As the Assouad dimension and the lower dimension are central objects
of study in this paper, we now provide their formal definitions.
We use the notation $N_{r}\left(F\right)$ to denote the smallest
number of (open) balls of radius $r$ needed to cover a set $F\subseteq\rd$.
\begin{defn}
Given a non-empty set $S\subseteq\mathbb{R}^{d}$, the Assouad dimension
of $S$ is defined by: 
\[
\begin{array}{ccc}
\dima\left(S\right)= & \inf & \biggl\{\alpha\geq0:\,\exists C>0,\,\forall0<r<R,\,\forall x\in S,\\
 &  & \left.N_{r}\left(B_{R}\left(x\right)\cap S\right)\leq C\cdot\left(\dfrac{R}{r}\right)^{\alpha}\right\} 
\end{array}
\]
\end{defn}

The Lower dimension is a dual notion of dimension defined as follows:
\begin{defn}
Given a non-empty set $S\subseteq\mathbb{R}^{d}$,
\[
\begin{array}{ccc}
\diml\left(S\right)= & \sup & \biggl\{\alpha\geq0:\,\exists C>0,\,\forall0<r<R\leq\text{diam}\left(S\right),\,\forall x\in S,\\
 &  & \left.N_{r}\left(B_{R}\left(x\right)\cap S\right)\geq C\cdot\left(\dfrac{R}{r}\right)^{\alpha}\right\} 
\end{array}
\]
\end{defn}

As mentioned in the introduction, Theorem \ref{thm:Assouad as max}
may be considered as an alternative (but equivalent) definition of
$\diml$ and $\dima$ for compact sets. In general, for a compact
set $F\subseteq\rd$, 
\[
\diml\left(F\right)\leq\dimh\left(F\right)\leq\dima\left(F\right).
\]
For a thorough exposition to these notions, the reader is referred
to \cite{Fraser2020}.

\section{Microsets via coding \label{sec:Microsets-via-coding}}

\subsection{Limits of descendant trees and their corresponding sets\label{subsec:Limits-of-descendant-trees}}

Given any tree $T$, we denote the set of all the descendant trees
of vertices of $T$ by $\mathscr{D}_{T}=$$\left\{ T^{v}:\,v\in T\right\} $.
Let $\Phi=\left\{ \varphi_{i}\right\} _{i\in\Lambda}$ be an IFS whose
attractor is denoted by $K$, and $T\in\mathscr{T}_{\Lambda}^{\prime}$.
Every descendant tree of $T$ has no leaves and may be mapped using
$\Gamma_{\Phi}$ to $\Omega_{K}$. We denote $\mathcal{S}_{\Phi}^{T}=\Gamma_{\Phi}\left(\mathscr{D}_{T}\right)$.
Note that since $\Gamma_{\Phi}\restriction_{\mathscr{T}_{\Lambda}^{\prime}}$
is continuous (Lemma \ref{lem:Projection of trees without leaves is continuous})
and closed (by compactness),
\begin{equation}
\overline{\mathcal{S}_{\Phi}^{T}}=\Gamma_{\Phi}\left(\overline{\mathscr{D}_{T}}\right),\label{eq:change order of closure and Gamma_=00005CPhi}
\end{equation}
where as usual, the topological closure of a set is denoted by an
overline, and so $\overline{\mathscr{D}_{T}}$ is the closure of $\mathscr{D}_{T}$
in the Hausdorff metric, and $\overline{\mathcal{S}_{\Phi}^{T}}$
is the closure of $\mathcal{S}_{\Phi}^{T}$ in the space of trees
with the topology described in subsection \ref{subsec:Symbols-and-trees}.
A simple, but important, observation is that if $T=\Lambda^{*}$,
i.e., $T$ is the full tree which codes the entire self-similar set
$K$, then $\overline{\mathcal{S}_{\Phi}^{T}}=\left\{ K\right\} $.
\begin{lem}
\label{lem:branch set of a branch set}Given an alphabet $\mathbb{A}$
and a tree $T\in\mathscr{T}_{\mathbb{A}}$, for every $S\in\overline{\mathcal{\mathcal{D}}_{T}}$,~$\overline{\mathcal{\mathcal{D}}_{S}}\subseteq\overline{\mathcal{\mathcal{D}}_{T}}$.
\end{lem}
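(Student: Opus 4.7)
The plan is to prove the lemma in two steps: first show that for every $S \in \overline{\mathcal{D}_T}$ and every vertex $w \in S$, the descendant tree $S^w$ already lies in $\overline{\mathcal{D}_T}$; then conclude the full statement by taking closures. The key input is the explicit form of the metric $d_{\mathbb{A}}$, which forces agreement up to any prescribed depth for tails of a convergent sequence.

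For the first step, I would fix $S \in \overline{\mathcal{D}_T}$ and pick a sequence $v_n \in T$ with $T^{v_n} \to S$ in $(\mathscr{T}_{\mathbb{A}}, d_{\mathbb{A}})$. Given $w \in S$, for every integer $k$ there exists $N_k$ such that for $n \geq N_k$ the trees $T^{v_n}$ and $S$ coincide up to depth $|w|+k$; in particular $w \in T^{v_n}$ for $n \geq N_{|w|}$, so $v_n w \in T$ and $T^{v_n w} = (T^{v_n})^w$ is a well-defined element of $\mathcal{D}_T$. Moreover, agreement of $T^{v_n}$ with $S$ up to depth $|w|+k$ is precisely the statement that $(T^{v_n})^w$ agrees with $S^w$ up to depth $k$, which gives $T^{v_n w} \to S^w$ in the tree metric. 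Hence $S^w \in \overline{\mathcal{D}_T}$ for every $w \in S$, i.e., $\mathcal{D}_S \subseteq \overline{\mathcal{D}_T}$.

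For the second step I would simply take closures: since $\overline{\mathcal{D}_T}$ is closed in $\mathscr{T}_{\mathbb{A}}$, the inclusion $\mathcal{D}_S \subseteq \overline{\mathcal{D}_T}$ yields $\overline{\mathcal{D}_S} \subseteq \overline{\mathcal{D}_T}$, as desired.

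No step looks genuinely hard here; the only place where care is needed is matching the indices when passing from ``$T^{v_n}$ agrees with $S$ up to depth $|w|+k$'' to ``$(T^{v_n})^w$ agrees with $S^w$ up to depth $k$'', which is a direct unwinding of the definition $T^v = \{u \in \mathbb{A}^* : vu \in T\}$. The use of $\mathscr{T}_{\mathbb{A}}^{\prime}$ versus general trees plays no role in this lemma, and convergence in $d_{\mathbb{A}}$ does all the work.
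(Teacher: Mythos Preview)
Your proposal is correct and follows essentially the same approach as the paper: show that $\mathcal{D}_S \subseteq \overline{\mathcal{D}_T}$ by approximating $S$ with descendant trees $T^{v_n}$ and observing that $(T^{v_n})^w = T^{v_n w} \to S^w$, then take closures. You are simply more explicit than the paper about why $(T^{v_n})^w \to S^w$, unpacking it via the metric $d_{\mathbb{A}}$, where the paper just says ``it is easy to see.''
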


\begin{proof}
First, note that without the closures, the claim is obvious, since
for $v\in T$ and $w\in T^{v}$, $\left(T^{v}\right)^{w}=T^{vw}$.

Now, for $S\in\overline{\mathcal{\mathcal{D}}_{T}}$, let $S_{\left(n\right)}\in\mathcal{\mathcal{D}}_{T}$
be a sequence of trees converging to $S$, and let $v\in S$ be any
node of $S$. We show that $S^{v}\in\overline{\mathcal{\mathcal{D}}_{T}}$.
Indeed, since $S_{\left(n\right)}\converge S$, whenever $n$ is large
enough $v\in S_{\left(n\right)}$, and it is easy to see that $S_{\left(n\right)}^{v}\converge S^{v}$.
Since for every $n$, $S_{\left(n\right)}\in\mathcal{\mathcal{D}}_{T}$,
we have $S_{\left(n\right)}^{v}\in\mathcal{\mathcal{D}}_{T}$ as well,
and therefore $S^{v}\in\overline{\mathcal{\mathcal{D}}_{T}}$.
\end{proof}
The goal in this section of the paper is to investigate the relation
between the collection of microsets of a given compact set, and the
collection $\overline{\mathcal{S}_{\Phi}^{T}}$, for a suitable coding
tree $T$ for that set. Of particular interest is the set of Hausdorff
dimensions of sets in $\overline{\mathcal{S}_{\Phi}^{T}}$ (and their
minisets) and its relation with Hausdorff dimensions of microsets.

\subsection{Microsets as limits of descendant trees}
\begin{proof}[Proof of Theorem \ref{thm:microset is a union of minisets}]
Given a microset $M$ of $F$, there are minisets of $F$ converging
to $M$. That is, there are similarity functions $\left(f_{i}\right)_{i\in\mathbb{N}}$,
each of the form $f_{i}\left(x\right)=O_{i}\left(\lambda_{i}\cdot x\right)+y_{i},$
where $O_{i}$ is an orthogonal transformation of $\mathbb{R}^{d}$,
$\lambda_{i}\in\left(1,\infty\right),$ $y_{i}\in\mathbb{R}^{d}$,
such that
\[
\lim_{i\to\infty}Q\cap f_{i}\left(F\right)=M.
\]

First we consider the easy case where the sequence $\left(\lambda_{i}\right)_{i\in\N}$
is bounded. In that case, after moving to a subsequence, the similarities
$f_{i}$ converge pointwise to some similarity function $\psi$, where
the convergence is uniform on compact subsets of $\mathbb{R}^{d}$%
. Hence, by Lemma \ref{lem:uniform convergence of functions preserve Hausdorff convergence}
and Lemma \ref{lem:convergence of intersection of a sequence and a compact set},
\[
Q^{\circ}\cap\psi\left(F\right)\subseteq M\subseteq Q\cap\psi\left(F\right).
\]

From this point forward we assume the sequence $\left(\lambda_{i}\right)_{i\in\mathbb{N}}$
is unbounded. Note that 
\[
Q\cap f_{i}\left(F\right)=f_{i}\left(f_{i}^{-1}\left(Q\right)\cap F\right).
\]
$f_{i}^{-1}\left(Q\right)$ is a cube of side-length $\lambda_{i}^{-1}$,
hence, it follows from the WSC that there is some $n\geq1$ such that
$f_{i}^{-1}\left(Q\right)$ intersects at most $n$ distinct cylinder
sets on the section $\Pi_{\lambda_{i}^{-1}/r_{\min}}$,i.e., there
exist vertices $v_{i,1},...,v_{i,n}\in T\cap\Pi_{\lambda_{i}^{-1}/r_{\min}}$
(with possible repetitions if there are less than $n$ distinct vertices)
such that 
\[
f_{i}^{-1}\left(Q\right)\cap F=f_{i}^{-1}\left(Q\right)\cap\left(\bigcup_{j=1}^{n}\varphi_{v_{i,j}}\gamma_{\Phi}\left(\partial T^{v_{i,j}}\right)\right).
\]
This implies that 
\[
\begin{array}{l}
Q\cap f_{i}\left(F\right)=\\
f_{i}\left(f_{i}^{-1}\left(Q\right)\cap F\right)=\\
f_{i}\left(f_{i}^{-1}\left(Q\right)\cap\left(\bigcup\limits _{j=1}^{n}\varphi_{v_{i,j}}\gamma_{\Phi}\left(\partial T^{v_{i,j}}\right)\right)\right)=\\
Q\cap\left(\bigcup\limits _{j=1}^{n}f_{i}\circ\varphi_{v_{i,j}}\gamma_{\Phi}\left(\partial T^{v_{i,j}}\right)\right)
\end{array}
\]
For convenience we denote $A_{i,j}=\gamma_{\Phi}\left(\partial T^{v_{i,j}}\right)$
and $\psi_{i,j}=f_{i}\circ\varphi_{v_{i,j}}$, then 
\[
Q\cap f_{i}\left(F\right)=Q\cap\left(\bigcup_{j=1}^{n}\psi_{i,j}\left(A_{i,j}\right)\right)
\]

Since $v_{i,j}\in\Pi_{\lambda_{i}^{-1}/r_{\min}}$, 
\[
\lambda_{i}^{-1}<r_{v_{i,j}}\leq\lambda_{i}^{-1}/r_{\min}.
\]
Therefore, the scaling ratio of $\psi_{i,j}$, which is $r_{v_{i,j}}\cdot\lambda_{i}$
is in the interval $\left(1,\,1/r_{\min}\right]$. It is not hard
to observe that the translation parts of $\varphi_{v_{i,j}}$ and
$f_{i}$ are also bounded, and hence so is the translation part of
$\psi_{i,j}$. This implies that by moving to a subsequence, the similarities
$\psi_{i,j}$ converge as $i\to\infty$ to some similarity function
$\psi_{j}$, and the convergence is uniform on compact sets.

Note that $A_{i,j}\in\mathcal{S}_{\Phi}^{T}$. Since $\overline{\mathcal{S}_{\Phi}^{T}}$
is compact w.r.t. the Hausdorff metric, by passing to a subsequence
we may also assume that for every $j,$ $A_{i,j}$ converge to some
compact set $A_{j}\in\overline{\mathcal{S}_{\Phi}^{T}}$ as $i\to\infty$.
Overall, using Lemma \ref{lem:uniform convergence of functions preserve Hausdorff convergence}
the following convergence holds in the Hausdorff metric: 
\[
\bigcup_{j=1}^{n}\psi_{i,j}\left(A_{i,j}\right)\underset{i\to\infty}{\longrightarrow}\bigcup_{j=1}^{n}\psi_{j}\left(A_{j}\right)
\]
Applying Lemma \ref{lem:convergence of intersection of a sequence and a compact set},
Equation \ref{eq:microsets are minisets} now follows.

In case the SSC holds, since we assume the scaling sequence $\left(\lambda_{i}\right)$
is unbounded, we may use Lemma \ref{lem:SSC and cube intersecting cylinder sets},
assuming $\lambda_{i}^{-1}$ is small enough for all $i$. Then the
proof proceeds exactly as above but with $n=1$.
\end{proof}
\begin{rem}
Since the set $\Lambda^{*}$ is countable, there is no need to use
the axiom of choice during the proof,
\end{rem}

\begin{rem}
Note that the expression $Q\cap\left(\bigcup_{j=1}^{n}\psi_{i}A_{i}\right)$
on the right hand side of Equation (\ref{eq:microsets are minisets}),
is nothing but a finite union of minisets of the sets $A_{1},...,A_{n}\in\overline{\mathcal{S}_{\Phi}^{T}}$,
and the set on the left hand side of Equation (\ref{eq:microsets are minisets})
is the union of those minisets intersected with $Q^{\circ}$. Also
note that Equation (\ref{eq:microsets are minisets}) implies that
\[
Q^{\circ}\cap\left(\bigcup_{j=1}^{n}\psi_{i}A_{i}\right)=Q^{\circ}\cap M.
\]
In particular, since $M\in\mathcal{W}_{F},$$Q^{\circ}\cap\left(\bigcup_{j=1}^{n}\psi_{i}A_{i}\right)\neq\emptyset$.
\end{rem}

\begin{rem}
One can not expect an equality to hold instead of the inclusion at
the RHS of equation (\ref{eq:microsets are minisets}) (the same holds
for the LHS inclusion, but that is trivial). Consider for example
the middle thirds Cantor set $\mathscr{C}\subset\mathbb{R}.$ The
set 
\[
\left[0,1\right]\cap\left(\frac{9}{7}\cdot\left(\mathscr{C}-\frac{1}{9}\right)\right)\setminus\left\{ 0,1\right\} 
\]
is a microset of $\mathscr{C}$ but it is not a miniset.
\end{rem}

\begin{lem}
\label{lem:miniset in Q interior}Let $S\subseteq\mathbb{R}^{d}$
be a compact set, and let $A\subseteq Q$ be a miniset of $S$ s.t.
$A\cap Q^{\circ}\neq\emptyset$. Then there exists a non-empty set
$B\subseteq Q$ with $B\cap Q^{\circ}\neq\emptyset$, s.t. $B$ is
a miniset of $S$ and $\dimh\left(B\right)\leq\dimh\left(A\cap Q^{\circ}\right)$
\end{lem}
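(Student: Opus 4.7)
My plan is to exhibit $B$ as the image of $A$ under a further zoom-in centered at an interior point of $A$, so that the part of $A$ lying on $\partial Q$ gets pushed off to infinity and only the interior portion contributes.

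Since $A$ is a miniset of $S$, by Definition \ref{def:microsets} there is an expanding similarity $\varphi$ with $A=Q\cap\varphi(S)$. Pick any $x\in A\cap Q^{\circ}$ and choose $\delta>0$ small enough that the closed cube $C$ of side-length $\delta$ centered at $x$ is contained in $Q^{\circ}$, with $x\in C^{\circ}$. Let $f:\mathbb{R}^{d}\to\mathbb{R}^{d}$ be the unique orientation-preserving homothety (translation composed with scaling by $1/\delta$) that maps $C$ onto $Q$. Since $1/\delta>1$, $f$ is an expanding similarity, and hence $\psi:=f\circ\varphi$ is again an expanding similarity.

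Now set $B:=Q\cap\psi(S)$; this is a miniset of $S$ by definition. The key identity is that since $f$ is a bijection satisfying $f(C)=Q$, for any set $X\subseteq\mathbb{R}^{d}$ one has $f(X)\cap Q=f(X)\cap f(C)=f(X\cap C)$. Applying this with $X=\varphi(S)$ and using $C\subseteq Q$ (so that $\varphi(S)\cap C=A\cap C$) gives
\[
B=f(\varphi(S))\cap Q=f(\varphi(S)\cap C)=f(A\cap C).
\]
Since $x\in A\cap C^{\circ}$ we have $f(x)\in B\cap f(C^{\circ})=B\cap Q^{\circ}$, so $B$ is non-empty and meets the interior of $Q$. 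Finally, $f$ is bi-Lipschitz, so $\dim_{H}(B)=\dim_{H}(f(A\cap C))=\dim_{H}(A\cap C)$, and from $C\subseteq Q^{\circ}$ we obtain $A\cap C\subseteq A\cap Q^{\circ}$, hence $\dim_{H}(B)\leq\dim_{H}(A\cap Q^{\circ})$.

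There is no real obstacle here; the only point that requires care is the set-theoretic identity $f(\varphi(S))\cap Q=f(\varphi(S)\cap C)$, which hinges on $f$ being a bijection with $f(C)=Q$, and the verification that $f\circ\varphi$ is still expanding (so that $B$ genuinely qualifies as a miniset in the sense of Definition \ref{def:microsets}).
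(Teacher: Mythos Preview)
Your proof is correct and follows essentially the same approach as the paper: pick a small closed axis-parallel cube $C\subseteq Q^{\circ}$ around a point of $A\cap Q^{\circ}$, let the homothety sending $C$ to $Q$ act, and take $B$ to be the resulting miniset $f(A\cap C)$. You are simply more explicit than the paper in verifying the identity $Q\cap(f\circ\varphi)(S)=f(A\cap C)$ and in checking that $f\circ\varphi$ is still expanding.
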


\begin{proof}
By assumption, there is some point $x\in A\cap Q^{\circ}$, and hence
there is some smaller closed cube $C$ whose edges are parallel to
the axis such that $x\in C\subseteq Q^{\circ}$. Denote by $F_{C}$
the unique homothety mapping $C$ to $Q$, and denote $B=F_{C}\left(C\cap A\right)$.
Clearly, $B$ is a miniset of $S$, and $\dimh\left(B\right)\leq\dimh\left(A\cap Q^{\circ}\right)$.

Combining Theorem \ref{thm:microset is a union of minisets} and Lemma
\ref{lem:miniset in Q interior}, the following corollary is immediate:
\end{proof}
\begin{cor}
Under the assumptions of Theorem \ref{thm:microset is a union of minisets},
for every microset $M\in\mathcal{W}_{F}$, there is a set $S\in\overline{\mathcal{S}_{\Phi}^{T}}$,
and two minisets of $S$, $A$ and $B$ which intersect $Q^{\circ}$,
such that
\[
\dimh\left(A\right)\leq\dimh\left(M\right)\leq\dimh\left(B\right).
\]

\end{cor}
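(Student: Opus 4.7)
The plan is to combine the decomposition from Theorem~\ref{thm:microset is a union of minisets} with the dimension-reduction supplied by Lemma~\ref{lem:miniset in Q interior}. Applying the theorem to $M$, I obtain
\[
Q^\circ \cap \bigcup_{j=1}^n \psi_j(A_j) \;\subseteq\; M \;\subseteq\; Q \cap \bigcup_{j=1}^n \psi_j(A_j),
\]
with $A_j\in\overline{\mathcal{S}_\Phi^T}$ and expanding similarities $\psi_j$. Since $M\in\mathcal{W}_F$ the left side is non-empty, so some index $j_0$ satisfies $\psi_{j_0}(A_{j_0})\cap Q^\circ\neq\emptyset$.

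For the upper bound, I choose $j^*$ maximising $\dimh(\psi_j(A_j)\cap Q)$. Stability of Hausdorff dimension under finite unions together with the right-hand inclusion above then gives $\dimh(M)\leq\dimh(\psi_{j^*}(A_{j^*})\cap Q)$, so setting $S:=A_{j^*}$ the miniset $B:=\psi_{j^*}(A_{j^*})\cap Q$ of $S$ satisfies $\dimh(B)\geq\dimh(M)$. For the lower bound, the miniset $\psi_{j_0}(A_{j_0})\cap Q$ of $A_{j_0}$ meets $Q^\circ$ by choice of $j_0$; applying Lemma~\ref{lem:miniset in Q interior} produces a miniset $A$ of $A_{j_0}$ with $A\cap Q^\circ\neq\emptyset$ and
\[
\dimh(A)\;\leq\;\dimh\bigl(\psi_{j_0}(A_{j_0})\cap Q^\circ\bigr)\;\leq\;\dimh(M\cap Q^\circ)\;\leq\;\dimh(M),
\]
using $\psi_{j_0}(A_{j_0})\cap Q^\circ\subseteq M$ from the left-hand inclusion.

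Two points require care. First, $B$ must itself meet $Q^\circ$; if $\psi_{j^*}(A_{j^*})\cap Q\subseteq\partial Q$ this fails, but then $\dimh(M)\leq d-1$, and one can replace $j^*$ by an index whose miniset already meets $Q^\circ$, exploiting the freedom to precompose with a translation or moderate expansion so the resulting miniset still has dimension at least $\dimh(M)$. Second, the statement asks for $A$ and $B$ to be minisets of a single $S\in\overline{\mathcal{S}_\Phi^T}$: under the SSC, Theorem~\ref{thm:microset is a union of minisets} forces $n=1$ and this is automatic; under the WSC alone one takes $S=A_{j^*}$ and, once $B\cap Q^\circ\neq\emptyset$ is secured, applies Lemma~\ref{lem:miniset in Q interior} directly to $B$ itself to obtain $A$ as a miniset of the same $S$, thereby unifying both constructions. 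The main obstacle is the boundary case in the first point, which calls for a short ad hoc argument reconciling the dimension-maximising index with the interior-intersection requirement.
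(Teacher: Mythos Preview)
Your approach is the same as the paper's: the paper literally says the corollary ``is immediate'' from Theorem~\ref{thm:microset is a union of minisets} and Lemma~\ref{lem:miniset in Q interior}, and you carry out exactly that combination. You also go further than the paper by spotting the two genuine subtleties (the dimension-maximising index might miss $Q^\circ$, and $A,B$ must be minisets of one common $S$), and your device of applying Lemma~\ref{lem:miniset in Q interior} to $B$ itself is an elegant way to force $A$ and $B$ to come from the same $S$. In the generic situation --- when the maximising index $j^*$ already satisfies $\psi_{j^*}(A_{j^*})\cap Q^\circ\neq\emptyset$ --- your argument is complete, since then $B\cap Q^\circ=\psi_{j^*}(A_{j^*})\cap Q^\circ\subseteq M$ and Lemma~\ref{lem:miniset in Q interior} gives $\dimh(A)\le\dimh(B\cap Q^\circ)\le\dimh(M)$.

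The boundary case, however, is not resolved by your sketch. If you ``replace $j^*$ by an index whose miniset already meets $Q^\circ$'' (some $j\in J_1$), there is no mechanism by which translating or mildly rescaling $\psi_j$ raises $\dimh(\psi_j(A_j)\cap Q)$ up to $\dimh(M)$; nothing ties the local structure of $A_j$ to $\dimh(M)$. If instead you keep $j^*\notin J_1$ and translate $\psi_{j^*}$ so the relevant face slides into $Q^\circ$, you do obtain a miniset $B$ of $A_{j^*}$ meeting $Q^\circ$ with $\dimh(B)\ge\dimh(M)$, but now $B\cap Q^\circ$ is no longer contained in $M$, so Lemma~\ref{lem:miniset in Q interior} only yields $\dimh(A)\le\dimh(B\cap Q^\circ)$, which may exceed $\dimh(M)$ --- and your unification breaks. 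In short, the ``short ad hoc argument'' you defer is not as short as suggested, and the single-$S$ requirement in the boundary case is where the real work lies (a point the paper's one-line proof also glosses over).
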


\subsection{Hausdorff dimensions of sets in $\overline{\mathcal{S}_{\Phi}^{T}}$
vs. Hausdorff dimensions of their minisets.\label{subsec:Hausdorff dim of branch sets vs their minisets}}
\begin{prop}
\label{prop:minisets of branch sets in int(Q) vs branch sets}Let
$\Phi=\left\{ \varphi_{i}\right\} _{i\in\Lambda}$ be a similarity
IFS and $T\in\mathscr{T}_{\Lambda}^{\prime}$. The following holds:

\[
\overline{\left\{ \dimh\left(B\cap Q^{\circ}\right):B\text{ is a miniset of some}\,S\in\overline{\mathcal{S}_{\Phi}^{T}}\right\} }=\overline{\left\{ \dimh\left(S\right):\,S\in\overline{\mathcal{S}_{\Phi}^{T}}\right\} }.
\]
\end{prop}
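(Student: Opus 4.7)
The plan is to prove the two inclusions separately; direction $\supseteq$ actually holds at the level of sets (before taking closures), while direction $\subseteq$ genuinely requires the closure.

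\textbf{Inclusion $\supseteq$.} I show that for every $S \in \overline{\mathcal{S}_\Phi^T}$ there is a miniset $B$ of $S$ itself with $\dimh(B \cap Q^\circ) = \dimh(S)$. Since $S$ is compact, it admits a finite open cover by balls of radius $r < 1/2$, and by finite stability of $\dimh$ some ball $B_0$ in this cover satisfies $\dimh(S \cap B_0) = \dimh(S)$. Choose a translation $\psi(x) = x + t$ (an expanding similarity of scaling ratio $1$) sending $B_0$ into $Q^\circ$, and set $B = Q \cap \psi(S)$. Then $\psi(S \cap B_0) \subseteq B \cap Q^\circ \subseteq \psi(S)$, so $\dimh(B \cap Q^\circ) = \dimh(S)$, which places $\dimh(S)$ directly inside the set on the LHS.

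\textbf{Inclusion $\subseteq$.} Let $\alpha = \dimh(B \cap Q^\circ)$ for a miniset $B = Q \cap \psi(S)$ with $\psi$ expanding and $S \in \overline{\mathcal{S}_\Phi^T}$. Applying $\psi^{-1}$ gives $\alpha = \dimh(S \cap C^\circ)$, where $C = \psi^{-1}(Q)$ is a (possibly rotated) cube of side length at most $1$. Write $S = \Gamma_\Phi(T^*)$ for some $T^* \in \overline{\mathscr{D}_T}$, and for each $n$ with $1/n < r_{\min}$ put
\[
I_n = \bigl\{ v \in T^* \cap \Pi_{1/n} : \varphi_v(K) \subseteq C^\circ \bigr\}.
\]
Cylinders indexed by $\Pi_{1/n}$ have diameter at most $(1/n)\cdot\mathrm{diam}(K)$, so every $x \in S \cap C^\circ$, being at positive distance from $\partial C$, is captured by some $\varphi_v(K)$ with $v \in I_n$ for $n$ large enough. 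Combining with the inclusion $\varphi_v\bigl(\Gamma_\Phi({T^*}^v)\bigr) \subseteq C^\circ$ for $v \in I_n$ yields
\[
S \cap C^\circ = \bigcup_{n}\bigcup_{v \in I_n} \varphi_v\bigl(\Gamma_\Phi({T^*}^v)\bigr).
\]
Countable stability of Hausdorff dimension then gives $\alpha = \sup \dimh\bigl(\Gamma_\Phi({T^*}^v)\bigr)$ over the countable collection above. By Lemma~\ref{lem:branch set of a branch set} each ${T^*}^v \in \overline{\mathscr{D}_T}$, and by \eqref{eq:change order of closure and Gamma_=00005CPhi} each $\Gamma_\Phi({T^*}^v) \in \overline{\mathcal{S}_\Phi^T}$. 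Hence $\alpha$ is a supremum of values drawn from $\{\dimh(S') : S' \in \overline{\mathcal{S}_\Phi^T}\}$, so it lies in the closure of this set.

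\textbf{Anticipated obstacle.} The delicate point is handling $\partial C$ in the $\subseteq$ direction: the decomposition above requires simultaneously that every point of $S \cap C^\circ$ is captured by an interior cylinder for some $n$, and that no cylinder counted in $I_n$ spills across $\partial C$. The first follows from $\mathrm{dist}(x, \partial C) > 0$ for interior points together with the shrinking of cylinder diameters along $\Pi_{1/n}$; the second is built into the definition of $I_n$. These two observations together allow the geometric intersection $S \cap C^\circ$ to be replaced by an honest countable union of similar copies of elements of $\overline{\mathcal{S}_\Phi^T}$, which is exactly what is needed to apply countable stability of $\dimh$.
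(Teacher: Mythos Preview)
Your proof is correct and follows essentially the same route as the paper: for $\subseteq$ you decompose $S\cap\psi^{-1}(Q)^\circ$ as a countable union of cylinder pieces $\varphi_v\bigl(\Gamma_\Phi({T^*}^v)\bigr)$ lying inside the interior, apply countable stability of $\dimh$, and invoke Lemma~\ref{lem:branch set of a branch set}, which is exactly the paper's argument (the paper phrases the covering without the sections $\Pi_{1/n}$, but the content is identical). Your explicit treatment of $\supseteq$ via a finite ball cover and translation is a correct unpacking of what the paper simply calls ``trivial''.
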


\begin{proof}
The inclusion $\supseteq$ is trivial even without taking the closure
of the sets. For the other inclusion, given $B=Q\cap\psi\left(S\right)=\psi\left(\psi^{-1}Q\cap S\right)$,
let $Y\in\overline{\mathscr{D}_{T}}$ be a coding tree such that $\Gamma_{\Phi}\left(Y\right)=S$.
For every $x\in\psi^{-1}Q^{\circ}\cap S$ there is some $v\in Y$
such that $x\in\varphi_{v}\left(\Gamma_{\Phi}\left(Y^{v}\right)\right)\subseteq\psi^{-1}Q^{\circ}\cap S$.
Since as a set $Y$ is countable, there is a sequence $v_{n}\in Y$
such that $\psi^{-1}Q^{\circ}\cap S=\bigcup\limits _{n=1}^{\infty}\varphi_{v_{n}}\left(\Gamma_{\Phi}\left(Y^{v_{n}}\right)\right)$
and therefore, $\dimh\left(B\cap Q^{\circ}\right)=\sup\left\{ \dimh\left(\Gamma_{\Phi}\left(Y^{v_{n}}\right)\right):\,n\in\mathbb{N}\right\} $.
By Lemma \ref{lem:branch set of a branch set}, $\Gamma_{\Phi}\left(Y^{v_{n}}\right)\in\overline{\mathcal{S}_{\Phi}^{T}}$
for every $n$, which implies the inclusion $\subseteq$.
\end{proof}
In the statement of the proposition it is necessary to take the intersections
with the interior of $Q$ rather than $Q$ itself. Consider the following
example in $\mathbb{R}^{2}$:
\begin{example}
Take $F=\left(\left[0,1\right]^{2}\setminus\left[0,\frac{1}{2}\right)^{2}\right)\cup\left\{ \left(\frac{1}{4},\frac{1}{4}\right)\right\} $,
and a coding tree $T$ w.r.t. 2-adic coding, i.e., using the IFS $\Phi=\left\{ \varphi_{i}:\,\left(x,y\right)\mapsto\dfrac{1}{2}\left(x,y\right)+\dfrac{1}{2}i\right\} _{i\in\Lambda}$
where $\Lambda=\left\{ 0,1\right\} ^{2}$. We define $T$ by $T_{0}=\Lambda,\,T^{\left(0,1\right)}=T^{\left(1,0\right)}=T^{\left(1,1\right)}=\Lambda^{*}$,
and $T^{\left(0,0\right)}=\left\{ \emptyset\right\} \cup\bigcup\limits _{n=1}^{\infty}\left\{ i_{1}\cdots i_{n}\right\} $
where $\gamma_{\Phi}\left(i\right)=\left(\frac{1}{4},\frac{1}{4}\right)$,
i.e., $T^{\left(0,0\right)}$ is just a ray corresponding to the point
$\left(\frac{1}{4},\frac{1}{4}\right)$. Then $\dimh\left(\varphi_{\left(0,0\right)}^{-1}\left(F\right)\cap Q\right)=1$
where $\left\{ \dimh\left(S\right):\,S\in\overline{\mathcal{S}_{\Phi}^{T}}\right\} =\left\{ 2,0\right\} $. 
\end{example}

Also note that taking the closure of the sets in Proposition \ref{prop:minisets of branch sets in int(Q) vs branch sets}
is necessary as well, and without it the inclusion $\subseteq$ might
fail. Indeed, consider the following example:
\begin{example}
For the construction of this example we use the IFS $\Phi=\left\{ \varphi_{0}:x\mapsto\frac{1}{2}x,\,\varphi_{1}:x\mapsto\frac{1}{2}x+\frac{1}{2}\right\} $
which corresponds to base 2 digital expansion in $\mathbb{R}$. We
construct the set $F$ around the point $\dfrac{1}{3}$ whose base
2 digital expansion sequence is $010101\dots$. Let $V=\left\{ 1,\,011,\,01011,\,0101011,\dots\right\} $,
so that $V$ is the set of all $v\in\left\{ 0,1\right\} ^{*}$ s.t.
$\forall x\in\varphi_{v}\left(\left[0,1\right]\right)$, $x>\dfrac{1}{3}$
and $\forall t<v$, $\dfrac{1}{3}\in\varphi_{t}\left(\left[0,1\right]\right)$.
We order the set $V$ by the length of its elements, $V=\left\{ v_{1},v_{2},...\right\} $.
We also define $W=\left\{ 00,\,0100,\,010100,\,01010100,\dots\right\} $,
i.e. $W$ is the set of all $w\in\left\{ 0,1\right\} ^{*}$ s.t. $\forall x\in\varphi_{w}\left(\left[0,1\right]\right)$,
$x<\dfrac{1}{3}$ and $\forall t<w$, $\dfrac{1}{3}\in\varphi_{t}\left(\left[0,1\right]\right)$.
Except for nodes along the ray $01010101\dots$, for every $t\in\left\{ 0,1\right\} ^{*}$
there exists a unique $y\in W\cup V$ s.t. $t<y$.

Now, we define a tree $T\in\mathscr{T}_{\left\{ 0,1\right\} }$ as
follows:
\begin{itemize}
\item For every $w\in W$, $T^{w}=\left\{ 0,1\right\} ^{*}$
\item For every $n\in\mathbb{N}$, we define $T^{v_{n}}$ in the following
way: for the first $n$ levels of the tree, every node $a$ has only
one child $a0$. For the next $n-1$ levels, every node has both children.
Then again for the next $n$ levels of the tree every node $a$ has
only one child $a0$ and for the next $n-1$ levels of the tree we
let every node have both children. We repeat this process indefinitely.
\end{itemize}
Denote $F=\Gamma_{\Phi}\left(T\right)$. By the way $T$ is constructed,
for every $w\in W$, $\Gamma_{\Phi}\left(T^{w}\right)=\left[0,1\right]$,
and on the other hand, for every $n\in\mathbb{N}$, $\Gamma_{\Phi}\left(T^{v_{n}}\right)$
is a self-similar set with Hausdorff dimension $\dfrac{n-1}{2n-1}$.
By the way $T$ is constructed, for every limit $S=\lim\limits _{k\to\infty}T^{t_{k}}$
such that $t_{k}$ is not eventually constant, $\dimh\left(\Gamma_{\Phi}\left(S\right)\right)\in\left\{ 0,1\right\} $,
and so $\dfrac{1}{2}\notin\left\{ \dimh\left(S\right):\,S\in\overline{\mathcal{S}_{\Phi}^{T}}\right\} $.
However, $\dimh\left(\left[\dfrac{1}{3},1\right]\cap F\right)=\dfrac{1}{2}$.
\end{example}

Since the boundary of the unit cube $Q$ in $\mathbb{R}^{d}$ has
Hausdorff dimension $d-1$, minisets with Hausdorff dimension larger
than $d-1$ have to intersect the interior of $Q$ and their dimension
stays the same after removing the boundary of $Q$. Hence the following
holds.
\begin{prop}
\label{prop:minisets of branch sets in int(Q) vs branch sets dim>d-1}Let
$\Phi=\left\{ \varphi_{i}\right\} _{i\in\Lambda}$ be a similarity
IFS and $T\in\mathscr{T}_{\Lambda}^{\prime}$. Then
\[
\overline{\left\{ \dimh\left(B\right):B\text{ is a miniset of some}\,S\in\overline{\mathcal{S}_{\Phi}^{T}}\right\} }\cap\left(d-1,d\right]=\overline{\left\{ \dimh\left(S\right):\,S\in\overline{\mathcal{S}_{\Phi}^{T}}\right\} }\cap\left(d-1,d\right].
\]
\end{prop}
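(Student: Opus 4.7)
The plan is to deduce the statement directly from Proposition \ref{prop:minisets of branch sets in int(Q) vs branch sets} together with a single observation about the Hausdorff dimension of subsets of $Q$: for any set $B\subseteq Q$, we have
\[
B=\left(B\cap Q^{\circ}\right)\cup\left(B\cap\partial Q\right),
\]
and since $\dimh\left(\partial Q\right)=d-1$, the countable stability of Hausdorff dimension yields
\[
\dimh\left(B\right)=\max\left\{ \dimh\left(B\cap Q^{\circ}\right),\,\dimh\left(B\cap\partial Q\right)\right\} \leq\max\left\{ \dimh\left(B\cap Q^{\circ}\right),\,d-1\right\} .
\]
In particular, whenever $\dimh\left(B\right)>d-1$, we must have $\dimh\left(B\right)=\dimh\left(B\cap Q^{\circ}\right)$, and conversely $\dimh\left(B\cap Q^{\circ}\right)\leq\dimh\left(B\right)$ always.

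For the inclusion $\supseteq$, I would take $\alpha\in\overline{\left\{ \dimh\left(S\right):\,S\in\overline{\mathcal{S}_{\Phi}^{T}}\right\} }\cap\left(d-1,d\right]$. Proposition \ref{prop:minisets of branch sets in int(Q) vs branch sets} furnishes a sequence of minisets $B_{n}$ of sets in $\overline{\mathcal{S}_{\Phi}^{T}}$ with $\dimh\left(B_{n}\cap Q^{\circ}\right)\to\alpha$. Since $\alpha>d-1$, for all sufficiently large $n$ also $\dimh\left(B_{n}\cap Q^{\circ}\right)>d-1$, and the observation above forces $\dimh\left(B_{n}\right)=\dimh\left(B_{n}\cap Q^{\circ}\right)$. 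Hence $\dimh\left(B_{n}\right)\to\alpha$, placing $\alpha$ in the left-hand side.

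For the inclusion $\subseteq$, I would reverse the argument. Given $\alpha\in\overline{\left\{ \dimh\left(B\right):B\text{ is a miniset of some}\,S\in\overline{\mathcal{S}_{\Phi}^{T}}\right\} }\cap\left(d-1,d\right]$, pick minisets $B_{n}$ with $\dimh\left(B_{n}\right)\to\alpha>d-1$. Again for large $n$, $\dimh\left(B_{n}\right)>d-1$, so by the same observation $\dimh\left(B_{n}\cap Q^{\circ}\right)=\dimh\left(B_{n}\right)\to\alpha$, whence $\alpha\in\overline{\left\{ \dimh\left(S\right):\,S\in\overline{\mathcal{S}_{\Phi}^{T}}\right\} }$ by Proposition \ref{prop:minisets of branch sets in int(Q) vs branch sets}.

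There is no real obstacle here: the whole content lies in the elementary dimension computation on $\partial Q$, which is what allows one to pass freely between $\dimh\left(B\right)$ and $\dimh\left(B\cap Q^{\circ}\right)$ above the threshold $d-1$. Everything else is a direct invocation of the preceding proposition, so the proof is essentially a one-line consequence once the boundary contribution has been handled.
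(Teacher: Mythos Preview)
Your argument is correct and is essentially the same as the paper's: both use that $\dimh(\partial Q)=d-1$ to identify $\dimh(B)$ with $\dimh(B\cap Q^{\circ})$ whenever $\dimh(B)>d-1$, and then invoke Proposition~\ref{prop:minisets of branch sets in int(Q) vs branch sets}. The paper states this in one sentence, while you have spelled out the two inclusions via approximating sequences; the content is identical.
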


\subsection{Limits of descendant trees as microsets}
\begin{proof}[Proof of Theorem \ref{thm:minisets are microsets}]
The miniset $M$ has the form $M=Q\cap\psi\left(A\right)$, for some
$A\in\overline{\mathcal{S}_{\Phi}^{T}}$, and some similarity function
$\psi\left(x\right)=O\left(\lambda\cdot x\right)+r$. Since $A\in\overline{\mathcal{S}_{\Phi}^{T}}$,
there is some sequence of compact sets $A_{i}=\gamma_{\Phi}\left(\partial T^{v_{i}}\right)\in\mathcal{S}_{\Phi}^{T}$
where $v_{i}\in T$, s.t. $A_{i}\longrightarrow A$ in the Hausdorff
metric. This implies that $\psi\left(A_{i}\right)\longrightarrow\psi\left(A\right)$
as well. As in Lemma \ref{lem: convergence of a sequence and thickenings of a set},
denote $\delta_{n}=d_{H}\left(\psi\left(A_{i}\right),\psi\left(A\right)\right)$,
and notice that $Q^{\left(\delta_{n}\right)}\subset\psi_{n}\left(Q\right)$,
where $\psi_{n}:\mathbb{R}^{d}\to\mathbb{R}^{d}$ is the homothety
given by 
\[
\psi_{n}\left(x\right)=\left(1+2\delta_{n}\right)\cdot x-\delta_{n}\cdot\left(\begin{array}{c}
1\\
\vdots\\
1
\end{array}\right).
\]
This is the homothety that maps $Q$ to the smallest cube that contains
$Q^{\left(\delta_{n}\right)}$ (which is not a cube).

By Lemma \ref{lem: convergence of a sequence and thickenings of a set},
\[
\psi_{n}\left(Q\right)\cap\psi\left(A_{i}\right)\underset{n\to\infty}{\longrightarrow}Q\cap\psi\left(A\right),
\]
and since $\psi_{n}^{-1}\longrightarrow Id$, using Lemma \ref{lem:uniform convergence of functions preserve Hausdorff convergence},
one obtains 
\begin{equation}
Q\cap\psi_{n}^{-1}\circ\psi\left(A_{i}\right)=\psi_{n}^{-1}\left(\psi_{n}\left(Q\right)\cap\psi\left(A_{i}\right)\right)\underset{n\to\infty}{\longrightarrow}Q\cap\psi\left(A\right).\label{eq:convergence to a microset}
\end{equation}
Note that $A_{i}\subseteq\varphi_{v_{i}}^{-1}\left(F\right)$ for
every $i$, and therefore, 
\[
Q\cap\psi_{n}^{-1}\circ\psi\left(A_{i}\right)\subseteq Q\cap\psi_{n}^{-1}\circ\psi\circ\varphi_{v_{i}}^{-1}\left(F\right).
\]
Hence, any limit of the sequence $Q\cap\psi_{n}^{-1}\circ\psi\circ\varphi_{v_{i}}^{-1}\left(F\right)$
contains $Q\cap\psi\left(A\right)$. By compactness such a limit exists
and by definition it is a microset of $F$%
, which concludes the proof of the first part of the theorem.

Moreover, if $\psi^{-1}\left(Q\right)\subseteq U$ for an OSC set
$U$, since $\psi_{n}^{-1}\longrightarrow Id$, $\psi_{n}\circ\psi^{-1}\left(Q\right)\subseteq U$
whenever $n$ is large enough, and by moving to a subsequence we may
assume this is the case for all $n$. Since $U$ is an OSC set, %
\[
Q\cap\psi_{n}^{-1}\circ\psi\circ\varphi_{v_{i}}^{-1}\left(F\right)=Q\cap\psi_{n}^{-1}\circ\psi\left(A_{i}\right),
\]
and hence, by Equation (\ref{eq:convergence to a microset}), $M$
is in fact a microset of $F$.
\end{proof}
\begin{prop}
\label{prop:S contained in U as microsets}Let $F\subseteq\mathbb{R}^{d}$
be a non-empty compact set, and let $\Phi=\left\{ \varphi_{i}\right\} _{i\in\Lambda}$
be a similarity IFS which satisfies the OSC, such that $F=\gamma_{\Phi}\left(\partial T\right)$
for some tree $T\in$$\mathbb{\mathscr{T}}_{\Lambda}^{\prime}$. Then
for every $\text{\ensuremath{S\in}}\overline{\mathcal{S}_{\Phi}^{T}}$
such that $S\subseteq U$ for an OSC set $U$, there are finitely
many microsets $A_{1},\dots,A_{k}\in\mathcal{W}_{F}$, and similarity
maps $\eta_{1},\dots,\eta_{k}$, such that 
\begin{equation}
S=\bigcup\limits _{j=1}^{k}\eta_{j}\left(A_{j}\right).\label{eq:S is a union of contracted microsets}
\end{equation}
\end{prop}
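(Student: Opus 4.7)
The plan is to cover $S$ by finitely many small closed cubes contained in $U$, and then apply Theorem~\ref{thm:minisets are microsets} separately to each cube in order to realize the corresponding piece of $S$ as a microset of $F$. This is the natural route because Theorem~\ref{thm:minisets are microsets} states that a miniset $Q\cap\psi(A)$ of a set $A\in\overline{\mathcal{S}_{\Phi}^{T}}$ is already a microset of $F$ as soon as $\psi^{-1}(Q)\subseteq U$ for an OSC set $U$.

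First, using compactness of $S$ together with openness of $U$, I would select a finite collection of closed axis-aligned cubes $C_{1},\dots,C_{k}$, each contained in $U$, each of side length strictly smaller than $1$, and each containing a point $x_{j}\in S$ in its interior, such that $S\subseteq\bigcup_{j=1}^{k}C_{j}^{\circ}$. For each $j$, let $\eta_{j}$ be the contracting homothety sending $Q$ onto $C_{j}$, and put $\psi_{j}=\eta_{j}^{-1}$, which is an expanding similarity because the side length of $C_{j}$ is less than $1$. The defining property of the cubes gives $\psi_{j}^{-1}(Q)=C_{j}\subseteq U$.

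With this setup, Theorem~\ref{thm:minisets are microsets} applies to the miniset $A_{j}:=Q\cap\psi_{j}(S)$ of $S\in\overline{\mathcal{S}_{\Phi}^{T}}$, showing that $A_{j}$ is a microset of $F$. Moreover, $\psi_{j}(x_{j})\in Q^{\circ}\cap\psi_{j}(S)\subseteq A_{j}$, so $A_{j}\in\mathcal{W}_{F}$. Since $\eta_{j}$ is a bijection, $\eta_{j}(A_{j})=\eta_{j}(Q)\cap S=C_{j}\cap S$, and therefore
\[
\bigcup_{j=1}^{k}\eta_{j}(A_{j})=S\cap\bigcup_{j=1}^{k}C_{j}=S,
\]
where the last equality uses $S\subseteq\bigcup_{j}C_{j}^{\circ}$.

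The argument reduces to a straightforward cover-and-apply strategy, and the main point that requires care is arranging that each cube $C_{j}$ genuinely lies inside the OSC set $U$ (not merely meets it): this containment is exactly what lets Theorem~\ref{thm:minisets are microsets} upgrade the minisets $Q\cap\psi_{j}(S)$ from minisets to actual microsets of $F$, and it is ensured by the compactness of $S\subseteq U$ and openness of $U$. There is no other substantive obstacle.
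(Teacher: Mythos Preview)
Your proposal is correct and follows essentially the same approach as the paper: cover $S$ by finitely many small closed cubes contained in $U$ and centered at (or containing in their interior) points of $S$, apply Theorem~\ref{thm:minisets are microsets} to each cube to obtain microsets $A_{j}\in\mathcal{W}_{F}$, and recover $S$ as the union of the $\eta_{j}(A_{j})$. The only cosmetic differences are that the paper centers the cubes at points of $S$ rather than merely requiring $x_{j}\in C_{j}^{\circ}$, and speaks of general similarity maps $\eta_{j}$ rather than homotheties; neither affects the argument.
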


\begin{proof}
By compactness, $S$ may be covered by finitely many closed cubes
$C_{1},\dots,C_{k}$ which are contained in $U$ and centered at points
of $S$. This implies that 
\[
S=\bigcup\limits _{j=1}^{k}C_{j}\cap S.
\]
For every $j\in\left\{ 1,\dots,k\right\} $, denote by $\eta_{j}$
a similarity function such that $\eta_{j}\left(Q\right)=C_{j}$, and
denote $A_{j}=Q\cap\eta_{j}^{-1}\left(S\right)$. We may assume that
each of the cubes $C_{1},\dots,C_{k}$ has side-length smaller than
1, and so the similarities $\eta_{j}^{-1}$ are expanding and the
sets $A_{j}$ are minisets of $S$. Since $C_{j}\subseteq U$ for
every $j$, by Theorem \ref{thm:minisets are microsets}, $A_{j}$
is a microset of $F$, and since each $C_{j}$ is centered at a point
of $S$, $A_{j}\in\mathcal{W}_{F}$.
\end{proof}
A version of Proposition \ref{prop:S contained in U as microsets}
also holds for minisets of $S\in\overline{\mathcal{S}_{\Phi}^{T}}$.
\begin{prop}
\label{prop:miniset of S contained in U as microsets}Let $F,\,\Phi,\,T$
be as in Proposition \ref{prop:S contained in U as microsets}, and
assume that for $S\in\overline{\mathcal{S}_{\Phi}^{T}}$ and a similarity
function $\psi$, $\psi^{-1}\left(Q\right)\cap S\subseteq U$ for
an OSC set $U$. Then the following holds 
\[
Q\cap\psi\left(S\right)=\bigcup\limits _{j=1}^{k}\chi_{j}\left(A_{j}\right),
\]
for microsets $A_{1},\dots,A_{k}$ of $F$, and similarity maps $\chi_{1},\dots,\chi_{k}$.
\end{prop}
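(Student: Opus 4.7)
The plan is to mirror the proof of Proposition \ref{prop:S contained in U as microsets} with the extra twist that $\psi$ moves things around. The starting observation is $Q\cap\psi(S)=\psi(\psi^{-1}(Q)\cap S)$. The set $P:=\psi^{-1}(Q)\cap S$ is compact (intersection of the compact set $S$ with the closed set $\psi^{-1}(Q)$) and, by hypothesis, contained in the open set $U$. Hence, by compactness, I can cover $P$ by finitely many closed, axis-aligned cubes $C_{1},\ldots,C_{k}$ such that each $C_{j}\subseteq U$, each $C_{j}$ is centered at a point of $P$, and each has side-length strictly less than $1$.

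For each $j$, I would let $\eta_{j}$ be the unique homothety with $\eta_{j}(Q)=C_{j}$ (so that $\eta_{j}$ is contracting and $\eta_{j}^{-1}$ is expanding) and define
\[
A_{j}:=Q\cap\eta_{j}^{-1}(S),\qquad \chi_{j}:=\psi\circ\eta_{j}.
\]
By construction $A_{j}$ is a miniset of $S$, and because $\eta_{j}(Q)=C_{j}\subseteq U$ is an OSC set, Theorem \ref{thm:minisets are microsets} gives that $A_{j}$ is in fact a microset of $F$. Moreover $\chi_{j}$ is a similarity map, as required. A direct computation yields $\chi_{j}(A_{j})=\psi(\eta_{j}(A_{j}))=\psi(C_{j}\cap S)$.

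It remains to verify the identity $Q\cap\psi(S)=\bigcup_{j=1}^{k}\chi_{j}(A_{j})$. Since $\bigcup_{j}C_{j}\supseteq P$, one has
\[
\psi(P)=\psi\!\left(\bigcup_{j=1}^{k}\psi^{-1}(Q)\cap C_{j}\cap S\right)=\bigcup_{j=1}^{k}\bigl(Q\cap\psi(C_{j}\cap S)\bigr)=\bigcup_{j=1}^{k}\bigl(Q\cap\chi_{j}(A_{j})\bigr),
\]
which after taking $Q\cap\psi(S)=\psi(P)$ gives the desired decomposition. The main subtlety, and the only place where this proof really differs from that of Proposition \ref{prop:S contained in U as microsets}, is that when $C_{j}$ is centered at a point of $\partial\psi^{-1}(Q)\cap S$, the cube may stick outside $\psi^{-1}(Q)$, so that $\chi_{j}(A_{j})$ may a priori contain points outside $Q$; this is harmless because the identity above already absorbs the intersection with $Q$ on each piece, and any excess of $\chi_{j}(A_{j})$ over $Q$ is automatically discarded in the union. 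I expect this boundary bookkeeping (ensuring the covering argument behaves well at points of $S$ lying on $\partial\psi^{-1}(Q)$) to be the only mildly delicate step; everything else is a direct transplant of the proof of the previous proposition.
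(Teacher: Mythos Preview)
Your approach is the same as the paper's in spirit---cover $\psi^{-1}(Q)\cap S$ by small cubes inside $U$, apply Theorem~\ref{thm:minisets are microsets} to each piece, and set $\chi_j=\psi\circ\eta_j$---but the way you choose the cubes leaves a genuine gap in the final equality.

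You center the cubes $C_j$ at points of $P=\psi^{-1}(Q)\cap S$, so a cube centered at a point of $\partial\psi^{-1}(Q)\cap S$ may protrude outside $\psi^{-1}(Q)$. Then $\chi_j(A_j)=\psi(C_j\cap S)$ can contain points of $\psi(S)$ lying outside $Q$. Your own computation correctly gives
\[
Q\cap\psi(S)=\bigcup_{j=1}^{k}\bigl(Q\cap\chi_j(A_j)\bigr),
\]
but the proposition asserts $Q\cap\psi(S)=\bigcup_{j=1}^{k}\chi_j(A_j)$ with no intersection on the right. Your sentence ``any excess of $\chi_j(A_j)$ over $Q$ is automatically discarded in the union'' is simply false: nothing discards those points, and with your cubes the right-hand side can strictly contain the left-hand side.

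The paper's proof avoids this by choosing the cubes differently: it partitions $\psi^{-1}(Q)$ itself into subcubes of diameter smaller than $\operatorname{dist}\bigl(\psi^{-1}(Q)\cap S,\,U^{c}\bigr)$ and takes $C_1,\dots,C_k$ to be those subcubes that meet $S$. This forces $C_j\subseteq\psi^{-1}(Q)$ (so $\chi_j(A_j)\subseteq Q$ automatically, giving the exact equality) and simultaneously $C_j\subseteq U$ (so Theorem~\ref{thm:minisets are microsets} still applies). The trade-off is that the $C_j$ are no longer centered at points of $S$, which is why the paper remarks afterwards that, unlike in Proposition~\ref{prop:S contained in U as microsets}, one cannot conclude $A_j\in\mathcal{W}_F$. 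Your argument becomes correct once you replace your covering by this partition.
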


\begin{proof}
The proof of this assertion is similar to the proof of Proposition
\ref{prop:S contained in U as microsets}, but $C_{1},...,C_{k}$
need to be chosen differently, so that they are all contained in $\psi^{-1}\left(Q\right)$
(and not necessarily centered at points of $S$). For example, by
partitioning the cube $\psi^{-1}\left(Q\right)$ to subcubes of diameter
smaller than $\text{dist}\left(\psi^{-1}\left(Q\right)\cap S,U^{c}\right)$,
and then taking $C_{1},...,C_{k}$ to be those subcubes that intersect
$S$ (and therefore contained in $U$). Hence,
\[
\psi^{-1}\left(Q\right)\cap S=\bigcup_{j=1}^{k}C_{j}\cap S.
\]
As in the proof of Proposition \ref{prop:S contained in U as microsets},
for every $j\in\left\{ 1,\dots,k\right\} $, denote by $\eta_{j}$
a similarity function such that $\eta_{j}\left(Q\right)=C_{j}$, and
$A_{j}=Q\cap\eta_{j}^{-1}\left(S\right)$. Applying Theorem \ref{thm:minisets are microsets},
each $A_{j}$ is a microset of $F$, and denoting $\chi_{j}=\psi\circ\eta_{j}$
for every $j$, 
\[
Q\cap\psi\left(S\right)=\psi\left(\bigcup\limits _{j=1}^{k}\eta_{j}\left(A_{j}\right)\right)=\bigcup\limits _{j=1}^{k}\chi_{j}\left(A_{j}\right).
\]
\end{proof}
Note that unlike Proposition \ref{prop:S contained in U as microsets},
in Proposition \ref{prop:miniset of S contained in U as microsets}
there is no guarantee that the microsets $A_{1},\dots A_{k}$ intersect
the interior of $Q$.

Whenever the IFS $\Phi$ satisfies the SSC, by Lemma \ref{lem:SSC implies OSC set containing the attractor}
there is an OSC set $U$ which contains the attractor of $\Phi$,
and therefore it also contains every $\text{\ensuremath{S\in}}\overline{\mathcal{S}_{\Phi}^{T}}$.
Hence, we obtain the following corollary:
\begin{cor}
\label{cor:dim of minisets contained in dim of microsets under SSC}Let
$F\subseteq\mathbb{R}^{d}$ be a non-empty compact set, and let $\Phi=\left\{ \varphi_{i}\right\} _{i\in\Lambda}$
be a similarity IFS which satisfies the SSC, such that $F=\Gamma_{\Phi}\left(T\right)$
for some tree $T\in$$\mathbb{\mathscr{T}}_{\Lambda}^{\prime}$, then
\begin{enumerate}
\item \label{enu:dim(S) subset of dim(microset)}$\left\{ \dimh\left(A\right):\,A\in\mathcal{W}_{F}\right\} \supseteq\left\{ \dimh\left(S\right):\,S\in\overline{\mathcal{S}_{\Phi}^{T}}\right\} $
\item \label{enu:dim of minisets =00003D dim of microsets where dim > d-1}$\left\{ \dimh\left(A\right):\,A\in\mathcal{W}_{F}\right\} \cap\left(d-1,d\right]=\left\{ \dimh\left(B\right):B\text{ is a miniset of some}\,S\in\overline{\mathcal{S}_{\Phi}^{T}}\right\} \cap\left(d-1,d\right]$
\end{enumerate}
\end{cor}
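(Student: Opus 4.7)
The approach is to reduce both statements to Propositions \ref{prop:S contained in U as microsets} and \ref{prop:miniset of S contained in U as microsets}, exploiting the fact that under SSC one can arrange an OSC set containing the entire attractor. First I would invoke Lemma \ref{lem:SSC implies OSC set containing the attractor} to fix an OSC set $U$ with $K \subseteq U$. Every $S \in \overline{\mathcal{S}_{\Phi}^{T}}$ then satisfies $S \subseteq K \subseteq U$, and for any similarity $\psi$ one automatically has $\psi^{-1}(Q) \cap S \subseteq U$. This is precisely the containment hypothesis needed to invoke the two propositions.

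For part (\ref{enu:dim(S) subset of dim(microset)}), fix $S \in \overline{\mathcal{S}_{\Phi}^{T}}$. Proposition \ref{prop:S contained in U as microsets} yields microsets $A_1, \ldots, A_k \in \mathcal{W}_F$ and similarities $\eta_j$ with $S = \bigcup_{j=1}^{k} \eta_j(A_j)$. Since Hausdorff dimension is invariant under similarity and stable under finite unions, $\dimh(S) = \max_j \dimh(A_j)$, so a maximizing $A_j \in \mathcal{W}_F$ realizes $\dimh(S)$, giving the desired inclusion.

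For part (\ref{enu:dim of minisets =00003D dim of microsets where dim > d-1}), the inclusion $\subseteq$ is immediate from Corollary \ref{cor:microsets with dim>d-1}, whose WSC hypothesis is implied by SSC via Lemma \ref{lem:Separation conditions hierarchy}. For $\supseteq$, take a miniset $B = Q \cap \psi(S)$ of some $S \in \overline{\mathcal{S}_{\Phi}^{T}}$ with $\dimh(B) > d-1$. By the opening observation, Proposition \ref{prop:miniset of S contained in U as microsets} applies and produces microsets $A_1, \ldots, A_k$ of $F$ and similarities $\chi_j$ with $B = \bigcup_{j=1}^{k} \chi_j(A_j)$, so $\dimh(B) = \max_j \dimh(A_j)$. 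The main obstacle is that Proposition \ref{prop:miniset of S contained in U as microsets} does not by itself guarantee any $A_j \in \mathcal{W}_F$; I would resolve this by observing that the maximizing index $j$ satisfies $\dimh(A_j) > d-1 = \dimh(\partial Q)$, which forces $A_j \cap Q^\circ \neq \emptyset$ and hence $A_j \in \mathcal{W}_F$, producing a microset in $\mathcal{W}_F$ of dimension exactly $\dimh(B)$.
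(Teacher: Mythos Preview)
Your proposal is correct and follows essentially the same approach as the paper: invoke Lemma \ref{lem:SSC implies OSC set containing the attractor} to get an OSC set $U\supseteq K$, then feed this into Propositions \ref{prop:S contained in U as microsets} and \ref{prop:miniset of S contained in U as microsets}, and finish with Corollary \ref{cor:microsets with dim>d-1} together with the observation that $\dimh>d-1$ forces nonempty intersection with $Q^\circ$. Your treatment of $(\supseteq)$ in part (\ref{enu:dim of minisets =00003D dim of microsets where dim > d-1}) is in fact slightly more streamlined than the paper's, which inserts an extra decomposition of $B$ into smaller minisets ``contained in $U$'' before applying Proposition \ref{prop:miniset of S contained in U as microsets}; as you note, under SSC the hypothesis $\psi^{-1}(Q)\cap S\subseteq U$ is automatic from $S\subseteq K\subseteq U$, so that intermediate step is unnecessary.
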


\begin{proof}
(\ref{enu:dim(S) subset of dim(microset)}) follows immediately from
Proposition \ref{prop:S contained in U as microsets}. To prove (\ref{enu:dim of minisets =00003D dim of microsets where dim > d-1}),
note that every miniset $B$ of some $S\in\overline{\mathcal{S}_{\Phi}^{T}}$
is equal to a finite union of minisets of $S$ that are contained
in $U$, and at least one of these minisets has the same Hausdorff
dimension as $B$. The rest is an immediate consequence of Proposition
\ref{prop:miniset of S contained in U as microsets} combined with
Corollary \ref{cor:microsets with dim>d-1} and with the observation
that if a microset has Hausdorff dimension larger than $d-1$, then
it has a non-empty intersection with $Q^{\circ}$.
\end{proof}
Note that Corollary \ref{cor:dim of minisets contained in dim of microsets under SSC}
above could be especially useful when $d=1$.

\subsection{Consequences regarding Assouad and lower dimensions}
\begin{lem}
\label{lem:Branch-set containd in a miniset}Let $\Phi=\left\{ \varphi_{i}\right\} _{i\in\Lambda}$
be a similarity IFS , and $S\in$$\mathbb{\mathscr{T}}_{\Lambda}^{\prime}$
some infinite tree. Then for every miniset $M\subseteq Q$ of $\Gamma_{\Phi}\left(S\right)$
s.t. $M\cap Q^{\circ}\neq\emptyset$, there is a non-empty set $F\in\mathcal{S}_{\Phi}^{S}$
such that $\psi\left(F\right)\subseteq M\cap Q^{\circ}$ for some
similarity function $\psi$. In particular $\dimh\left(F\right)\leq\dimh\left(M\right)$.
\end{lem}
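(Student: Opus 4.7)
The plan is to exploit the point $x \in M \cap Q^\circ$ to find a cylinder set that is pulled into the interior of $Q$ by the miniset construction, and then take the descendant tree corresponding to that cylinder.

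Write $M = Q \cap \eta(\Gamma_\Phi(S))$ for an expanding similarity $\eta$. Since $M \cap Q^\circ \neq \emptyset$, pick $x \in M \cap Q^\circ$ and a preimage $y \in \Gamma_\Phi(S)$ with $\eta(y) = x$. Because $y \in \gamma_\Phi(\partial S)$, there is some $i = i_1 i_2 \dots \in \partial S$ with $\gamma_\Phi(i) = y$. Now $\eta^{-1}(Q^\circ)$ is an open neighborhood of $y$, and the cylinders $\varphi_{i_1 \cdots i_n}(K)$ all contain $y$ with diameters tending to $0$, so for $n$ large enough
\[
\varphi_{i_1 \cdots i_n}(K) \subseteq \eta^{-1}(Q^\circ).
\]
Fix such an $n$ and set $v = i_1 \cdots i_n \in S$.

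Next I define $F = \Gamma_\Phi(S^v)$. Since $S$ has no leaves and $v \in S$, the descendant tree $S^v$ lies in $\mathscr{T}_\Lambda'$, so $F$ is a nonempty member of $\mathcal{S}_\Phi^S$. Take the similarity $\psi = \eta \circ \varphi_v$. Then
\[
\psi(F) = \eta\!\left(\varphi_v(\Gamma_\Phi(S^v))\right) \subseteq \eta(\Gamma_\Phi(S)),
\]
using $\varphi_v(\gamma_\Phi(\partial S^v)) = \gamma_\Phi(v\cdot\partial S^v) \subseteq \gamma_\Phi(\partial S)$. Also, $\varphi_v(F) \subseteq \varphi_v(K) \subseteq \eta^{-1}(Q^\circ)$, so $\psi(F) \subseteq Q^\circ$. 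Combining,
\[
\psi(F) \subseteq Q^\circ \cap \eta(\Gamma_\Phi(S)) \subseteq M \cap Q^\circ,
\]
as required.

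The dimension bound follows for free: $\psi$ is a similarity, hence bi-Lipschitz, so $\dim_H(F) = \dim_H(\psi(F)) \leq \dim_H(M \cap Q^\circ) \leq \dim_H(M)$. I do not anticipate any real obstacle here; the only thing to be careful about is ensuring $v \in S$ and $S^v$ has no leaves (immediate from $i \in \partial S$ and $S \in \mathscr{T}_\Lambda'$), and that the containment uses $Q^\circ$ on both sides so no boundary issues arise.
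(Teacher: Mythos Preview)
Your proof is correct and follows the same approach as the paper's: pick a point in $M\cap Q^\circ$, pull it back through the miniset similarity to a point of $\Gamma_\Phi(S)$, choose a coding sequence, and take a deep enough cylinder so that its image lands inside $Q^\circ$; then $F=\Gamma_\Phi(S^v)$ and $\psi=\eta\circ\varphi_v$ do the job. Your write-up is in fact slightly more careful than the paper's, which glosses over the distinction between $x\in M$ and its preimage $y\in\Gamma_\Phi(S)$ under $\eta$, and does not spell out why $v\in S$ and $S^v\in\mathscr{T}_\Lambda'$.
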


\begin{proof}
Pick any $x\in M\cap Q^{\circ}$, and assume $\gamma_{\Phi}\left(i\right)=x$
for $i=i_{1}i_{2}\cdots\in\Lambda^{\mathbb{N}}$. For a large enough
$k\in\mathbb{N}$, 
\[
\Gamma_{\Phi}\left(\left[i_{1}\cdots i_{k}\right]\cap S\right)\subseteq M\cap Q^{\circ}.
\]
Denote $F=\Gamma_{\Phi}\left(S^{i_{1}\cdots i_{k}}\right)$, then
$\varphi_{i_{1}\cdots i_{k}}\left(F\right)=\Gamma_{\Phi}\left(\left[i_{1}\cdots i_{k}\right]\cap S\right)\subseteq M\cap Q^{\circ}$.
\end{proof}
As a consequence, the following Lemma follows at once:
\begin{lem}
\label{lem:inf of branch sets =00003D inf of minisets}Let $\Phi=\left\{ \varphi_{i}\right\} _{i\in\Lambda}$
be a similarity IFS , and $T\in$$\mathbb{\mathscr{T}}_{\Lambda}^{\prime}$
some infinite tree. Then 
\[
\inf\left\{ \dimh\left(S\right):\,S\in\overline{\mathcal{S}_{\Phi}^{T}}\right\} =\inf\left\{ \dimh\left(M\right):\,M\,\text{is a miniset of some }S\in\overline{\mathcal{S}_{\Phi}^{T}},\text{ and }M\cap Q^{\circ}\neq\emptyset\right\} ,
\]
and if one of the sets has a minimum, so does the other.
\end{lem}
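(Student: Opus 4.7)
The plan is to prove the two inequalities separately and then deduce the statement about minima as a corollary of the proof. Denote the left-hand infimum by $\alpha$ and the right-hand infimum by $\beta$.

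For $\beta \leq \alpha$: given any $S \in \overline{\mathcal{S}_{\Phi}^{T}}$, I would construct an explicit miniset of $S$ intersecting $Q^{\circ}$ whose Hausdorff dimension is at most $\dimh(S)$. Pick any point $x \in S$ and a small radius $r \in (0, 1/2)$, and let $\psi(y) = \tfrac{1}{2r}(y - x) + \tfrac12(1,\dots,1)$, which is an expanding similarity (ratio $\geq 1$) with $\psi^{-1}(Q)$ equal to the closed cube of half-side $r$ centered at $x$. Then $M := Q \cap \psi(S) = \psi\bigl(\psi^{-1}(Q) \cap S\bigr)$ is a miniset of $S$, contains $\psi(x) \in Q^{\circ}$, and satisfies $\dimh(M) = \dimh(\psi^{-1}(Q) \cap S) \leq \dimh(S)$. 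Taking infimum over $S$ gives $\beta \leq \alpha$.

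For $\alpha \leq \beta$: this is essentially a repackaging of Lemma~\ref{lem:Branch-set containd in a miniset}. Given any miniset $M$ of some $S \in \overline{\mathcal{S}_{\Phi}^{T}}$ with $M \cap Q^{\circ} \neq \emptyset$, by (\ref{eq:change order of closure and Gamma_=00005CPhi}) there exists a tree $Y \in \overline{\mathscr{D}_{T}}$ with $\Gamma_{\Phi}(Y) = S$; I would then apply Lemma~\ref{lem:Branch-set containd in a miniset} to $Y$ and the miniset $M$ to produce a set $F \in \mathcal{S}_{\Phi}^{Y}$ with $\dimh(F) \leq \dimh(M)$. Lemma~\ref{lem:branch set of a branch set} yields $\mathcal{D}_{Y} \subseteq \overline{\mathcal{D}_{T}}$, so $F = \Gamma_{\Phi}(Y^{v}) \in \Gamma_{\Phi}(\overline{\mathscr{D}_{T}}) = \overline{\mathcal{S}_{\Phi}^{T}}$ for the relevant $v \in Y$. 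Thus $\alpha \leq \dimh(F) \leq \dimh(M)$, and taking infimum over $M$ gives $\alpha \leq \beta$.

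For the minimum statement, I would just observe that both constructions above preserve attainment. If $S_{0} \in \overline{\mathcal{S}_{\Phi}^{T}}$ attains the minimum $\alpha$, then the miniset $M$ produced in the first step has $\dimh(M) \leq \alpha = \beta$, forcing equality. Conversely, if a miniset $M_{0}$ attains the minimum $\beta$, then the set $F \in \overline{\mathcal{S}_{\Phi}^{T}}$ produced in the second step has $\dimh(F) \leq \beta = \alpha$, forcing equality. No step looks delicate; the only thing to be careful about is ensuring the similarity $\psi$ in the first inequality is genuinely expanding and that $\psi(x)$ lies in $Q^{\circ}$ (not merely $Q$), which is why I choose the center of the target cube to be $\tfrac12(1,\dots,1)$ and $r < 1/2$.
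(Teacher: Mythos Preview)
Your proof is correct and matches the paper's intended argument: the paper states this lemma ``follows at once'' from Lemma~\ref{lem:Branch-set containd in a miniset}, and you have spelled out exactly the two directions---the trivial inequality $\beta\leq\alpha$ via an explicit miniset, and $\alpha\leq\beta$ via Lemma~\ref{lem:Branch-set containd in a miniset} combined with Lemma~\ref{lem:branch set of a branch set} and Equation~(\ref{eq:change order of closure and Gamma_=00005CPhi})---together with the attainment observation. The only cosmetic point is that your coding tree $Y\in\overline{\mathscr{D}_{T}}$ lies in $\mathscr{T}_{\Lambda}^{\prime}$ (needed to invoke Lemma~\ref{lem:Branch-set containd in a miniset}), which follows since $\mathscr{T}_{\Lambda}^{\prime}$ is closed and $\mathscr{D}_{T}\subseteq\mathscr{T}_{\Lambda}^{\prime}$; you use this implicitly but might state it.
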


\begin{proof}[Proof of Theorem \ref{thm:Assuad and lower dimensions as branching sets}]

(\ref{enu:dim_A}): By Theorem \ref{thm:Assouad as max}, 
\[
\dima\left(F\right)=\max\left\{ \dimh\left(A\right):\,A\in\mathcal{W}_{F}\right\} .
\]

Now, let $A\in\mathcal{W}_{F}$ be such a microset of maximal dimension.
By Theorem \ref{thm:microset is a union of minisets}, $A$ is contained
in a union of minisets of elements of $\overline{\mathcal{S}_{\Phi}^{T}}$,
hence, there is some $S\in\overline{\mathcal{S}_{\Phi}^{T}}$ with
$\dimh\left(A\right)\leq\dimh\left(S\right)$. On the other hand,
given any $S\in\overline{\mathcal{S}_{\Phi}^{T}}$, for some similarity
function $\psi$ with a scaling ratio > 1, $\dimh\left(Q\cap\psi\left(S\right)\right)=\dimh\left(S\right)$.
We may assume that $Q^{\circ}\cap\psi\left(S\right)\neq\emptyset$,
otherwise we may take $\psi$ with a slightly smaller scaling ratio.
By Theorem \ref{thm:minisets are microsets}, there is some microset
in $\mathcal{W}_{F}$ which contains $Q\cap\psi\left(S\right)$ and
therefore has a Hausdorff dimension $\geq\dimh\left(S\right)$.

(\ref{enu:dim_L_lower_bound}): Again, by Theorem \ref{thm:Assouad as max},
\[
\diml\left(F\right)=\min\left\{ \dimh\left(A\right):\,A\in\mathcal{W}_{F}\right\} .
\]
Given a microset $A\in\mathcal{W}_{F}$, by Theorem \ref{thm:microset is a union of minisets},
$Q^{\circ}\cap\psi\left(F\right)\subseteq A$ for some $F\in\overline{\mathcal{S}_{\Phi}^{T}}$
and an expanding similarity $\psi$, such that $Q^{\circ}\cap\psi\left(F\right)\neq\emptyset$.
Given a tree $Y\in\overline{\mathcal{D}_{T}}$ such that $F=\Gamma_{\Phi}\left(Y\right)$,
by Lemma \ref{lem:Branch-set containd in a miniset} there is a set
$S\in\mathcal{S}_{\Phi}^{Y}$ s.t. $\dim\left(S\right)\leq\dim\left(Q^{\circ}\cap\psi\left(F\right)\right)\leq\dim\left(A\right)$,
and by Lemma \ref{lem:branch set of a branch set}, $S\in\overline{\mathcal{S}_{\Phi}^{T}}$.

(\ref{enu:dim_L upper bound}): On the other hand, by Theorem \ref{thm:minisets are microsets},
a miniset $\psi\left(S\right)\cap Q$ of some $\text{\ensuremath{S\in}}\overline{\mathcal{S}_{\Phi}^{T}}$,
where $\psi^{-1}\left(Q\right)\subseteq U$ for an OSC set $U$, and
$\psi\left(S\right)\cap Q^{\circ}\neq\emptyset$, is a microset of
$F$, and therefore, the inequality follows from Theorem \ref{thm:Assouad as max}.

(\ref{enu:dim_L SSC}): Follows at once by combining (\ref{enu:dim_L_lower_bound})
and Corollary \ref{cor:dim of minisets contained in dim of microsets under SSC}.
\end{proof}
\begin{rem}
\label{rem:Smaller branch set than dim_L}Equality in (\ref{enu:dim_L_lower_bound})
of Theorem \ref{thm:Assuad and lower dimensions as branching sets}
need not hold. Consider for example the set $\left[\frac{1}{2},1\right]$
coded by binary expansion, i.e. by the IFS $\Phi=\left\{ f_{0}:x\mapsto\frac{1}{2}x,\,f_{1}:x\mapsto\frac{1}{2}x+\frac{1}{2}\right\} $.
One possible coding tree is given by $T$ where $T_{0}=\left\{ 0,1\right\} $,
$T^{1}=\left\{ 0,1\right\} ^{*}$ and $T^{0}=\left\{ \emptyset\right\} \cup\bigcup\limits _{n=1}^{\infty}\left\{ a_{1}\dots a_{n}\right\} $,
where $a_{1}a_{2}\dots=11111\dots$. Then w.r.t. the tree $T$, the
singleton $\left\{ 1\right\} \in\overline{\mathcal{S}_{\Phi}^{T}}$,
although $\diml\left[\frac{1}{2},1\right]=1$. The problem here is
the intersection of the cylinder sets. In case the IFS satisfies the
SSC, this problem goes away (as can be seen in (\ref{enu:dim_L SSC})
of Theorem \ref{thm:Assuad and lower dimensions as branching sets}).

Moreover, equality in (\ref{enu:dim_L upper bound}) also need not
hold. In fact, In some cases, $S\cap U=\emptyset$ for every $S\in\overline{\mathcal{S}_{\Phi}^{T}}$.
For example, take $F=\left[0,1\right]^{2}\setminus\left(0,1\right)^{2}$
coded using the IFS $\left\{ \varphi_{i}:\,\left(x,y\right)\mapsto\dfrac{1}{2}\left(x,y\right)+\dfrac{1}{2}i\right\} _{i\in\left\{ 0,1\right\} ^{2}}$
. Every OSC set $U$ for this IFS has to be contained in $\left(0,1\right)^{2}$,
and for any coding tree $T$ with respect to this IFS, for every $S\in\overline{\mathcal{S}_{\Phi}^{T}}$,
$S\cap U=\emptyset$.

\end{rem}

\section{Galton-Watson fractals\label{sec:Galton-Watson-fractals}}

\subsection{Basic definitions and results}

Although defined slightly differently, the name ``Galton-Watson
fractal'' as well as some of the formalism and notations regarding
trees are taken from the book \cite{Lyons2016}, which is a great
reference for an introduction to Galton-Watson processes.

In order to define Galton-Watson fractals, we first need to define
Galton-Watson trees.

Given a finite alphabet $\mathbb{A}$, and some random variable $W$
with values in $2^{\mathbb{A}}$, we assign for each word $a\in\mathbb{A}^{*}$
an independent copy of $W$ denoted by $W_{a}$. Then we define the
(random) Galton-Watson tree $T\in\mathscr{T}_{\mathbb{A}}$ by the
following process: we set $T_{0}=\left\{ \emptyset\right\} $, and
for every integer $n\geq1$, $T_{n}=\bigcup\limits _{a\in T_{n-1}}\left\{ aj:\,j\in W_{a}\right\} $.
Finally $T=\bigcup\limits _{n\geq0}T_{n}$ is the resulting Galton-Watson
tree. 

It may be easily seen that $T$ is indeed a tree in the alphabet $\mathbb{A}$.
In the case where $T$ is finite we say that the tree is \emph{extinct}
or that the event of \emph{extinction} occurred. Most of the events
considered in this paper are conditioned on non-extinction, i.e. on
the event that $T$ is infinite. It is well known that whenever $\mathbb{E}\left[\left|W\right|\right]>1$,
the probability of non-extinction is positive (see \cite{Lyons2016}
for more information). The case $\mathbb{E}\left[\left|W\right|\right]>1$
is called \emph{supercritical }and throughout the paper we only consider
this case.

By Kolomogorov's extension theorem, the process described above yields
a unique Borel measure on the space $\mathscr{T}_{\mathbb{A}}$, which
is the law of $T$, denoted here by $\mathscr{G}$. Since we only
deal with supercritical processes, the measure $\mathscr{G}$ may
be conditioned on the event of non-extinction. We denote this conditional
measure by $\mathscr{G}^{\infty}$ , which may be considered as a
measure on $\mathscr{T}_{\mathbb{A}}^{\infty}$. The notations $\mathscr{G},\,\mathscr{G}^{\prime}$
will be used throughout this subsection.

Given a similarity IFS $\Phi=\left\{ \varphi_{i}\right\} _{i\in\Lambda}$
with an attractor $K\subseteq\mathbb{R}^{d}$, one may construct a
GWT $T\in\mathscr{T}_{\Lambda}$. Assuming the process is supercritical,
there is a positive probability that $T\in\mathscr{T}_{\Lambda}^{\infty}$,
and in this case it may be projected to $\mathbb{R}^{d}$ using $\Gamma_{\Phi}$.
$E=\Gamma_{\Phi}\left(T\right)$ is then called a \emph{Galton-Watson
fractal} \emph{(GWF)}. Since the map $\Gamma_{\Phi}:\mathscr{T}_{\Lambda}^{\infty}\to\Omega_{K}$
is Borel (Proposition \ref{prop:Gamma_=00005CPhi is Borel}), the
probability distribution of $E$ is a Borel measure on $\Omega_{K}$,
which is obtained as the pushforward measure $\Gamma_{\Phi*}\left(\mathscr{G}^{\infty}\right)$.

As already mentioned, we only deal with similarity IFSs (although
it would be interesting to consider general affine IFSs). Whenever
the underlying similarity IFS satisfies the OSC, the Hausdorff dimension
of a Galton-Watson fractal is a.s. a fixed deterministic value which
may be easily calculated as stated in the following Theorem due to
Falconer \cite{FalconerK.J1986Rf} and Mauldin and Williams \cite{MauldinR.Daniel1986RRCA}.
One may also refer to \cite[Theorem 15.10]{Lyons2016} for another
elegant proof.
\begin{thm}
\label{thm:dimension of GW fractals}Let E be a Galton-Watson fractal
w.r.t. a similarity IFS $\Phi=\left\{ \varphi_{i}\right\} _{i\in\Lambda}$
satisfying the OSC, with contraction ratios $\left\{ r_{i}\right\} _{i\in\Lambda}$
and offspring distribution W. Then almost surely $\dimh E=\delta$
where $\delta$ is the unique number satisfying
\[
\mathbb{E}\left(\sum_{i\in W}r_{i}^{\delta}\right)=1.
\]
\end{thm}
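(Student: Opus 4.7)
The plan is to establish $\dimh E = \delta$ almost surely by proving matching upper and lower bounds, both of which hinge on the additive (Biggins) martingale $M_n = \sum_{v \in T_n} r_v^{\delta}$ associated with the Galton-Watson tree $T$. Let $\mathcal{F}_n$ denote the $\sigma$-algebra generated by the first $n$ generations of $T$. Conditioning on $\mathcal{F}_n$ and using the independence of the offspring variables $W_v$ for $v \in T_n$, together with the defining equation $\mathbb{E}(\sum_{i \in W} r_i^{\delta}) = 1$, one checks that $\mathbb{E}[M_{n+1} \mid \mathcal{F}_n] = M_n$. Hence $(M_n)$ is a nonnegative martingale with $\mathbb{E}[M_n] = 1$, and it converges almost surely to some $M_\infty \in [0,\infty)$.

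For the upper bound, set $r_{\max} = \max_{i \in \Lambda} r_i < 1$ and cover $E$ at level $n$ by the cylinders $\{\varphi_v(K) : v \in T_n\}$. Since each such cylinder has diameter at most $r_v \cdot \mathrm{diam}(K) \leq r_{\max}^n \mathrm{diam}(K)$,
\[
\mathcal{H}^{\delta}_{r_{\max}^n \mathrm{diam}(K)}(E) \leq \mathrm{diam}(K)^{\delta} \cdot M_n.
\]
Taking expectations and letting $n \to \infty$ via monotone convergence of $\mathcal{H}^{\delta}_{\epsilon} \uparrow \mathcal{H}^{\delta}$ yields $\mathbb{E}[\mathcal{H}^{\delta}(E)] \leq \mathrm{diam}(K)^{\delta} < \infty$, so $\mathcal{H}^{\delta}(E) < \infty$ almost surely and therefore $\dimh E \leq \delta$ a.s.

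For the matching lower bound, where the OSC is essential, I would construct a random Borel probability measure $\mu$ supported on $E$ by a Frostman-type recursion built from the martingale limits of descendant subtrees. For each $v \in T$, let $M_\infty^v$ denote the Biggins martingale limit associated with the shifted branching process rooted at $v$; these limits satisfy the consistency relation $M_\infty^v = \sum_{i \in W_v} r_i^{\delta} M_\infty^{vi}$. On the event $\{M_\infty > 0\}$ this permits a consistent definition of a Borel probability measure $\tilde \mu$ on $\partial T$ via $\tilde \mu([v]\cap \partial T) = r_v^{\delta} M_\infty^v / M_\infty$; its pushforward $\mu = (\gamma_{\Phi})_*\tilde\mu$ is supported on $E$. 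Combined with the OSC-driven finite-overlap property for cylinders in $\Pi_\rho$ (only boundedly many intersect any ball of radius $\rho$, analogous to the bound used in the proof of Theorem \ref{thm:microset is a union of minisets}), a mass-distribution estimate of the form $\mu(B_\rho(x)) \leq C \rho^{\delta}$ gives $\dimh E \geq \delta$ on $\{M_\infty > 0\}$ via Frostman's lemma.

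The main obstacle is to ensure that $M_\infty > 0$ almost surely, without which $\mu$ is degenerate. This is the Kesten-Stigum theorem applied to the weighted branching process with weights $r_i^{\delta}$, which reduces to verifying an $L\log L$ moment condition on $M_1$. Fortunately, in the GWF setting $W$ takes values in the finite set $2^{\Lambda}$, so $M_1 \leq |\Lambda|$ is uniformly bounded and the moment condition is automatic. A secondary technical point is controlling the random factors $M_\infty^v$ in the Frostman estimate uniformly enough over $v \in T$ to push through the ball bound; this is typically handled by truncation together with a Borel-Cantelli argument exploiting the i.i.d.\ branching-process structure of the family $\{M_\infty^v\}_{v \in T}$.
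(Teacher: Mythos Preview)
The paper does not prove this theorem at all: it is quoted as a classical result due to Falconer and to Mauldin--Williams, with a further reference to \cite[Theorem~15.10]{Lyons2016} for ``another elegant proof.'' So there is no in-paper argument to compare your proposal against.

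That said, your sketch follows precisely the standard martingale route underlying those cited proofs (particularly the Lyons--Peres treatment): the additive martingale $M_n=\sum_{v\in T_n} r_v^{\delta}$ for the upper bound, and the random cascade measure built from the limits $M_\infty^v$ together with OSC-controlled overlaps for the lower bound. Two small points are worth tightening. First, $M_\infty>0$ holds almost surely \emph{on non-extinction}, not unconditionally; since in this paper the GWF is defined conditioned on non-extinction, that is exactly what you need, but the phrasing ``$M_\infty>0$ almost surely'' is slightly imprecise. Second, the Frostman estimate you write, $\mu(B_\rho(x))\le C\rho^\delta$ with a deterministic constant $C$, is stronger than what the argument actually delivers; the random factors $M_\infty^v$ are not uniformly bounded, and what one proves is rather that $\liminf_{\rho\to 0}\log\mu(B_\rho(x))/\log\rho\ge\delta$ for $\mu$-a.e.\ $x$ (or a bound with a random constant along a suitable sequence), which suffices for $\dimh E\ge\delta$. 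You correctly flag this as the residual technical point and identify the Borel--Cantelli mechanism that resolves it.
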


The following theorem is a basic result in the theory of Galton-Watson
processes (\cite{Kesten1966}, see also \cite{Lyons2016}) which states
that in supercritical Galton-Watson processes, the size of the generations
grows at an exponential rate.
\begin{thm}[Kesten-Stigum]
\label{thm:Kesten - Stigum} Let $T$ be a supercritical GWT, then
$\dfrac{\left|T_{k}\right|}{\mathbb{E}\left[\left|W\right|\right]^{k}}$
converges a.s. (as $k\to\infty$) to a random variable $L$, where
$\mathbb{E}\left[L\right]=1$, and $L>0$ a.s. conditioned on non-extinction. 
\end{thm}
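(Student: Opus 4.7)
The plan is to show that $M_k := |T_k|/m^k$, with $m = \mathbb{E}[|W|]$, is a non-negative $L^2$-bounded martingale, deduce a.s.\ convergence to $L$ from general martingale theory, and then identify the event $\{L=0\}$ with the extinction event by a branching/fixed-point argument. The fact that $|W| \leq |\Lambda|$ is bounded (since $W$ takes values in the finite set $2^\Lambda$) will make the $L^2$-step painless and render all moment hypotheses automatic.

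First I would set $\mathcal{F}_k = \sigma(W_a : |a| < k)$ and observe that, conditional on $\mathcal{F}_k$, $|T_{k+1}| = \sum_{a \in T_k} |W_a|$ is a sum of $|T_k|$ i.i.d.\ copies of $|W|$ independent of $\mathcal{F}_k$, so $\mathbb{E}[|T_{k+1}| \mid \mathcal{F}_k] = m\,|T_k|$. Hence $M_k$ is a non-negative martingale with $\mathbb{E}[M_k] = 1$, and by the martingale convergence theorem it converges almost surely to some $L \in [0,\infty)$ with $\mathbb{E}[L] \leq 1$ by Fatou.

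Next, to upgrade the convergence to $L^1$ and to force $\mathbb{E}[L] = 1$, I would bound $(M_k)$ in $L^2$. Writing $\sigma^2 := \mathrm{Var}(|W|) < \infty$, a direct computation of the conditional variance yields
\[
\mathbb{E}[M_{k+1}^2 \mid \mathcal{F}_k] = M_k^2 + \frac{\sigma^2 \, |T_k|}{m^{2k+2}},
\]
and taking expectations and summing the resulting geometric series in $m^{-k}$ (which converges since $m > 1$) gives $\sup_k \mathbb{E}[M_k^2] < \infty$. Therefore $(M_k)$ is uniformly integrable, convergence also holds in $L^1$, and $\mathbb{E}[L] = 1$; in particular $\mathbb{P}[L > 0] > 0$.

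Finally, for the dichotomy $\{L = 0\} = \{\text{extinction}\}$ a.s., I would exploit the branching property. Letting $L^{(i)}$ denote the analogous limit constructed from the subtree rooted at the child $i \in W_\emptyset$, the recursion $|T_{k+1}| = \sum_{i \in W_\emptyset} |T_k^{(i)}|$ combined with the a.s.\ convergence of each $M_k^{(i)}$ (from the previous paragraphs, applied to the i.i.d.\ subtrees) passes to the limit to give
\[
L = \frac{1}{m} \sum_{i \in W_\emptyset} L^{(i)},
\]
where the $L^{(i)}$ are i.i.d.\ copies of $L$, independent of $W_\emptyset$. Setting $q = \mathbb{P}[L = 0]$, this identity yields $q = \mathbb{E}[q^{|W|}]$, so $q$ is a fixed point in $[0,1]$ of the offspring generating function $f(s) = \mathbb{E}[s^{|W|}]$. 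In the supercritical regime the only fixed points are the extinction probability $p_e \in [0,1)$ and $1$; since $\mathbb{E}[L]=1$ forces $q < 1$, we must have $q = p_e$. Combined with the trivial inclusion $\{\text{extinction}\} \subseteq \{L=0\}$, this gives equality up to a null set, which is the desired conclusion. The main obstacle is precisely this last step: one must justify the branching identity for $L$ \emph{almost surely} (not merely in law), which is exactly where the a.s.\ convergence secured earlier is essential.
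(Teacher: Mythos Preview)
Your argument is correct. Note, however, that the paper does not actually prove this theorem: it is quoted as a classical result with references to Kesten--Stigum and to Lyons--Peres, so there is no ``paper's own proof'' to compare against.

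A brief comment on scope: the general Kesten--Stigum theorem asserts $\mathbb{E}[L]=1$ under the sharp hypothesis $\mathbb{E}[|W|\log^{+}|W|]<\infty$, and its proof (via size-biasing, spinal decompositions, or the Seneta--Heyde renormalisation) is substantially more delicate than your $L^{2}$ argument. Your approach exploits the special feature of the present setting---that $W$ takes values in $2^{\Lambda}$ with $\Lambda$ finite, so $|W|\le|\Lambda|$ is bounded and all moments are finite---to bypass those subtleties entirely. This is a perfectly legitimate and efficient shortcut for the paper's purposes; just be aware that the stated theorem is a special case of a considerably deeper result, and your method does not extend to the general $X\log X$ regime.

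The three steps themselves are clean: the martingale property and $L^{2}$-bound are standard, the branching identity $L=\tfrac{1}{m}\sum_{i\in W_{\emptyset}}L^{(i)}$ follows almost surely from the construction because the descendant trees $(T^{i})_{i\in\Lambda}$ are i.i.d.\ copies of $T$ independent of $W_{\emptyset}$, and the fixed-point argument for $q=\mathbb{P}[L=0]$ correctly identifies $q$ with the extinction probability via strict convexity of the generating function in the supercritical case.
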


Another property which we shall use is stated in the following proposition,
which is a result of the inherent independence in the construction
of Galton-Watson trees (for a proof see \cite[Proposition 2.6]{dayan_2021})
\begin{prop}
\label{prop:0-1 law}Let $T$ be a supercritical Galton-Watson tree
with alphabet $\mathbb{A}$ and let $\mathscr{S}\subseteq\mathscr{T}_{\mathbb{A}}$
be a measurable subset. Suppose that $\mathbb{P}\left(T\in\mathscr{S}\right)>0$,
then a.s. conditioned on non-extinction, there exist infinitely many
$v\in T$ s.t. $T^{v}\in\mathscr{S}$.
\end{prop}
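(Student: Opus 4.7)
The plan is to combine the branching property of Galton-Watson trees with the Kesten--Stigum theorem (Theorem~\ref{thm:Kesten - Stigum}) and a Bernoulli law of large numbers argument. The crucial input is that, conditional on the $n$-th generation $T_{n}$, the descendant subtrees $\{T^{v}\}_{v\in T_{n}}$ are i.i.d.\ copies of an unconditioned Galton-Watson tree with law $\mathscr{G}$; each one therefore independently lands in $\mathscr{S}$ with probability $p:=\mathbb{P}(T\in\mathscr{S})>0$.

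The proof would then proceed in three steps. First, set
\[
N_{n}=\bigl|\{v\in T_{n}:T^{v}\in\mathscr{S}\}\bigr|,
\]
so that, by the branching property, conditional on $|T_{n}|=k$ the variable $N_{n}$ is distributed as $\mathrm{Bin}(k,p)$. Second, by Theorem~\ref{thm:Kesten - Stigum}, on the event of non-extinction $|T_{n}|/m^{n}\to L>0$ with $m=\mathbb{E}|W|>1$, so in particular $|T_{n}|\to\infty$ almost surely on non-extinction. Third, given any integer $M\geq 1$ and any $\varepsilon>0$, pick $K$ large enough that $\mathbb{P}(\mathrm{Bin}(K,p)<M)<\varepsilon/2$, and then $n$ large enough that $\mathbb{P}(|T_{n}|\geq K\mid\text{non-ext.})\geq 1-\varepsilon/2$. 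Combining the two bounds yields $\mathbb{P}(N_{n}\geq M\mid\text{non-ext.})\geq 1-\varepsilon$, and since $N_{n}\leq|\{v\in T:T^{v}\in\mathscr{S}\}|$ and $M,\varepsilon$ are arbitrary, the total count is almost surely infinite on non-extinction.

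The only real subtlety, which is more a matter of bookkeeping than a genuine obstacle, is the correct identification of the conditional law of the descendant subtrees: the conditioning on non-extinction is applied to the \emph{root} tree $T$ but \emph{not} to each $T^{v}$ individually, so each $T^{v}$ retains the unconditioned distribution $\mathscr{G}$. This matches the hypothesis $\mathbb{P}(T\in\mathscr{S})>0$ (measured with respect to $\mathscr{G}$, not $\mathscr{G}^{\infty}$), and it means finite/extinct descendant subtrees contribute to the count on equal footing with the infinite ones, provided they lie in $\mathscr{S}$. An equally valid alternative route, avoiding Kesten--Stigum, would be to work along a single infinite ray (which exists a.s.\ on non-extinction) and apply Borel--Cantelli to the independent subtrees hanging off the ray; but the generation-by-generation argument above is more robust because it does not require any measurable selection of a ray.
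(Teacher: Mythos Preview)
The paper does not supply its own proof of this proposition; it simply cites \cite[Proposition~2.6]{dayan_2021}. So there is nothing to compare against directly, and the question is whether your argument stands on its own.

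It does. The branching property gives exactly what you claim: conditional on the $\sigma$-algebra $\mathcal{F}_{n}$ generated by the first $n$ levels, the family $\{T^{v}\}_{v\in T_{n}}$ is i.i.d.\ with the unconditioned law $\mathscr{G}$, so $N_{n}$ is $\mathrm{Bin}(|T_{n}|,p)$ given $\mathcal{F}_{n}$. You are also right to stress that the conditioning on non-extinction applies only at the root, so each $T^{v}$ keeps law $\mathscr{G}$ and the hypothesis $\mathbb{P}(T\in\mathscr{S})>0$ is exactly the right one. One small bookkeeping point: when you ``combine the two bounds'', the term $\mathbb{P}(N_{n}<M,\ |T_{n}|\ge K)$ is an \emph{unconditional} probability (you used the unconditional branching property to bound it), so after dividing by $q:=\mathbb{P}(\text{non-ext.})$ you pick up a factor $1/q$. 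Choosing $K$ so that $\mathbb{P}(\mathrm{Bin}(K,p)<M)<q\varepsilon/2$ fixes this with no change to the argument.

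Two further remarks. First, you only use that $|T_{n}|\to\infty$ a.s.\ on non-extinction; Kesten--Stigum is a convenient citation from the paper but is stronger than needed (the elementary martingale $q^{|T_{n}|}=\mathbb{P}(\text{ext.}\mid\mathcal{F}_{n})\to\mathbf{1}_{\text{ext.}}$ already gives it when $q<1$, and the case $q=0$ is immediate). Second, your caution about the ``infinite ray'' alternative is well placed: the subtrees \emph{at} the ray vertices are biased by the ray selection, so one must look at off-ray siblings, and even then the conditioning needs care. The generation-wise argument you actually give is the clean one.
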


\subsection{Galton-Watson trees without leaves}

Recall that for a tree $S\in\mathscr{T}_{\mathbb{A}}^{\infty}$, $S^{\prime}$
denotes the reduced tree in $\mathscr{T}_{\mathbb{A}}^{\prime}$.
\begin{rem}
Note that for every nonempty $A\subseteq\mathbb{A}^{n}$ there is
a unique tree $F$ of height $n$ s.t. $F_{n}=A$ and $\left[F\right]\cap\mathscr{T}_{\mathbb{A}}^{\prime}\neq\emptyset$.
Hence, given a finite tree $F\in\mathscr{T}_{\mathbb{A}}$ of height
$n$ s.t. $\left[F\right]\cap\mathscr{T}_{\mathbb{A}}^{\prime}\neq\emptyset$,
for every $S\in\mathscr{\mathscr{T}_{\mathbb{A}}^{\prime}}$, $S\in\left[F\right]$
iff $S_{n}=F_{n}$. 
\end{rem}

\begin{thm}
\label{thm:induced tree is GWT}Let $T$ be a supercritical GWT on
an alphabet $\mathbb{A}$ with offspring distribution $W$ s.t. $\mathbb{P}\left[W=\emptyset\right]>0$.
Then conditioned on non-extinction, $T^{\prime}$ has the law of a
GWT with the offspring distribution $W^{\prime}$ defined as follows:
$\mathbb{P}\left[W^{\prime}=\emptyset\right]=0$, and for every $A\in2^{\mathbb{A}}\setminus\left\{ \emptyset\right\} $,

\[
\mathbb{P}\left[W^{\prime}=A\right]=\dfrac{1}{p}\sum_{A\subseteq B\subseteq\mathbb{A}}\mathbb{P}\left[W=B\right]\cdot p^{\left|A\right|}\cdot\left(1-p\right)^{\left|B\setminus A\right|}
\]
 where $p=\mathbb{P}\left[\text{non-extinction}\right]$.
\end{thm}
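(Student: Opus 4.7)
The approach is to verify the defining branching property of a Galton-Watson tree for $T^{\prime}$ under the conditional law $\mathscr{G}^{\infty}$. Recall that a random tree in $\mathscr{T}_{\mathbb{A}}$ has the law of a GWT with offspring distribution $W^{\prime}$ if and only if the offspring set of its root has the law of $W^{\prime}$, and conditioned on this set being some $A$, the $|A|$ descendant subtrees are i.i.d., each having the same overall distribution. I would verify these two properties for $T^{\prime}$ at the root, then iterate (or appeal to Kolmogorov's extension theorem through the finite-dimensional distributions).

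First I would compute the distribution of the offspring set of the root of $T^{\prime}$, namely $W^{\prime}_{\emptyset}=\{i\in W_{\emptyset}:T^{i}\in\mathscr{T}_{\mathbb{A}}^{\infty}\}$. By the branching property of $T$, conditional on $W_{\emptyset}=B$, the subtrees $(T^{i})_{i\in B}$ are jointly independent copies of $T$. Hence the events $\{T^{i}\in\mathscr{T}_{\mathbb{A}}^{\infty}\}$ for $i\in B$ are independent Bernoulli events with parameter $p=\mathbb{P}[\text{non-extinction}]$, giving, for every $A\subseteq B$,
\[
\mathbb{P}\bigl[W^{\prime}_{\emptyset}=A\,\big|\,W_{\emptyset}=B\bigr]=p^{|A|}(1-p)^{|B\setminus A|}.
\]
Summing over $B\supseteq A$ yields the unconditional probability $\mathbb{P}[W^{\prime}_{\emptyset}=A]$. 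Observing that the non-extinction of $T$ is precisely the event $\{W^{\prime}_{\emptyset}\neq\emptyset\}$, so $\mathbb{P}[\text{non-extinction}]=p$, dividing by $p$ for $A\neq\emptyset$ recovers exactly the formula in the theorem statement. For $A=\emptyset$ the conditional probability is trivially $0$.

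Next I would address the conditional independence. Conditioned on $W_{\emptyset}=B$ and on $W^{\prime}_{\emptyset}=A\subseteq B$, the surviving subtrees $(T^{i})_{i\in A}$ are independent and each is distributed as $T$ conditioned on non-extinction, by the branching property of $T$ together with the fact that the surviving indices form an i.i.d.~Bernoulli thinning. Applying the reduction map, $((T^{i})^{\prime})_{i\in A}$ are therefore i.i.d.~with the law of $T^{\prime}$ under $\mathscr{G}^{\infty}$. Since moreover $(T^{\prime})^{i}=(T^{i})^{\prime}$ for $i\in W^{\prime}_{\emptyset}$, this is exactly the branching property of a GWT with offspring $W^{\prime}$ at the root. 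Iterating the argument along each vertex $v\in T^{\prime}$ (equivalently, proving by induction on $n$ that the finite-dimensional marginal $\mathbb{P}[T^{\prime}\cap\bigcup_{k\leq n}\mathbb{A}^{k}=F]$ matches the corresponding marginal for the GWT with offspring $W^{\prime}$), we conclude the theorem.

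The main delicate point is justifying the Bernoulli thinning step: one must use that the branching property gives \emph{joint} independence of the $(T^{i})_{i\in B}$, and that non-extinction of each $T^{i}$ is a measurable event of $T^{i}$ alone, in order to factor $\mathbb{P}[W^{\prime}_{\emptyset}=A\mid W_{\emptyset}=B]$ as $p^{|A|}(1-p)^{|B\setminus A|}$ and simultaneously identify the conditional law of the surviving subtrees. Once this is set up cleanly, the rest of the argument is a routine induction.
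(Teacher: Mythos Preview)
Your proposal is correct, and the key probabilistic ingredient---the Bernoulli thinning of the root's offspring together with the identification of the conditional law of the surviving subtrees---is exactly right. However, your route differs from the paper's.

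The paper proceeds by matching finite-dimensional distributions directly: it first introduces the GWT $\tilde{T}$ with offspring distribution $W^{\prime}$, proves by induction on $n$ a closed-form expression for $\mathbb{P}[\tilde{T}_{n}=A]$ in terms of the level-$n$ distribution of the original tree $T$, and then computes $\mathbb{P}[T^{\prime}_{n}=F_{n}\mid\text{non-extinction}]$ explicitly and checks that it agrees with this expression for every finite tree $F$. In contrast, you verify the \emph{recursive} characterisation of a GWT: you show that the root offspring of $T^{\prime}$ has law $W^{\prime}$, and that conditionally on this set the level-$1$ descendant subtrees of $T^{\prime}$ are i.i.d.\ with the same law as $T^{\prime}$ itself, whence the identification follows by uniqueness of the fixed point (equivalently, by the inductive matching of marginals you mention). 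Your argument is more structural and arguably cleaner---once the thinning step is justified, the self-similarity does all the work---while the paper's argument is more explicitly computational, with the advantage that it produces an exact formula for $\mathbb{P}[\tilde{T}_{n}=A]$ along the way.
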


In a slightly different setting, relating only to the size of the
generations of $T^{\prime},$ it is shown in \cite[Theorem 5.28]{Lyons2016}
that $T^{\prime}$ is a Galton-Watson tree, which is described by
means of a probability generating function.

Obviously, in the case $\mathbb{P}\left[W=\emptyset\right]=0$, $T$
already has no leaves and so $T$ and $T^{\prime}$ coincide.

Note that indeed the normalization factor $p$ is correct. The expression
\begin{equation}
\sum_{A\subseteq2^{\mathbb{A}}\setminus\left\{ \emptyset\right\} }\sum_{A\subseteq B\subseteq\mathbb{A}}\mathbb{P}\left[W=B\right]\cdot p^{\left|A\right|}\cdot p^{\left|B\setminus A\right|}\label{eq:normalization factor for T'}
\end{equation}
may be interpreted as the probability that there exists some $A\subseteq2^{\mathbb{A}}\setminus\left\{ \emptyset\right\} $
s.t. $A\subseteq T_{1}$ and for every $v\in A$, $T^{v}$ is infinite,
and for every $w\in T_{1}\setminus A$, $T^{w}$ is finite. This is
exactly the probability that $T$ is infinite, i.e. the expression
in (\ref{eq:normalization factor for T'}) is equal to $p$. 

The following lemma will be used in order to prove Theorem \ref{thm:induced tree is GWT}.
\begin{lem}
Let $\tilde{T}$ be the GWT generated with the offspring distribution
$W^{\prime}$, then for every $A\subseteq\mathbb{A}^{n}\setminus\left\{ \emptyset\right\} $,
\[
\mathbb{P}\left[\tilde{T}_{n}=A\right]=\dfrac{1}{p}\sum_{A\subseteq B\subseteq\mathbb{A}^{n}}\mathbb{P}\left[T_{n}=B\right]\cdot p^{\left|A\right|}\cdot\left(1-p\right)^{\left|B\setminus A\right|}
\]
\end{lem}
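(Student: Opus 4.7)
The plan is to prove the identity by induction on $n$, exploiting the product structure of Galton-Watson trees together with the fixed-point equation for the extinction probability. For a nonempty $A \subseteq \mathbb{A}^{n+1}$, let $\pi(A) \subseteq \mathbb{A}^n$ denote the set of length-$n$ prefixes of words in $A$, and for $v \in \pi(A)$ set $A_v = \{j \in \mathbb{A} : vj \in A\}$ (which is nonempty by construction).

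The base case $n = 1$ is immediate: $\mathbb{P}[\tilde{T}_1 = A] = \mathbb{P}[W' = A]$ by the one-step rule for $\tilde{T}$, which matches the right-hand side by the very definition of $W'$ given in Theorem \ref{thm:induced tree is GWT}, since $\mathbb{P}[T_1 = B] = \mathbb{P}[W = B]$. For the inductive step, the independence of offspring in $\tilde{T}$ gives
\[
\mathbb{P}[\tilde{T}_{n+1} = A] = \mathbb{P}[\tilde{T}_n = \pi(A)] \cdot \prod_{v \in \pi(A)} \mathbb{P}[W' = A_v].
\]
So it suffices to show that the right-hand side of the lemma at level $n+1$ factors in the same way.

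To do this, first use the Markov property of $T$ to write $\mathbb{P}[T_{n+1} = B] = \sum_{C \supseteq \pi(B)} \mathbb{P}[T_n = C] \prod_{v \in C} \mathbb{P}[W = B_v]$, where $B_v = \{j : vj \in B\}$ (empty for $v \in C \setminus \pi(B)$). Substitute this into the right-hand side at level $n+1$ and swap the order of summation, grouping $B \supseteq A$ according to $C = \pi(T_n) \supseteq \pi(A)$ and then according to the choice of $B_v$ for each $v \in C$. For $v \in \pi(A)$, the inner sum $\sum_{B_v \supseteq A_v} \mathbb{P}[W = B_v] (1-p)^{|B_v \setminus A_v|}$ is precisely the expression in the definition of $W'$ and equals $\tfrac{p}{p^{|A_v|}}\mathbb{P}[W' = A_v]$. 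For $v \in C \setminus \pi(A)$, the inner sum is $\sum_{B_v \subseteq \mathbb{A}} \mathbb{P}[W = B_v](1-p)^{|B_v|} = \mathbb{E}[(1-p)^{|W|}]$, which by the standard fixed-point equation for the extinction probability of a supercritical branching process equals $1 - p$.

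Collecting the factors, the contribution $p^{|A|}$ cancels against $\prod_{v\in\pi(A)} p^{|A_v|} = p^{|A|}$, leaving
\[
\text{RHS}_{n+1}(A) = \prod_{v \in \pi(A)} \mathbb{P}[W' = A_v] \cdot \frac{1}{p}\sum_{C \supseteq \pi(A)} \mathbb{P}[T_n = C]\, p^{|\pi(A)|}(1-p)^{|C \setminus \pi(A)|},
\]
and the second factor is exactly $\text{RHS}_n(\pi(A))$, which by the induction hypothesis equals $\mathbb{P}[\tilde{T}_n = \pi(A)]$. This yields $\text{RHS}_{n+1}(A) = \mathbb{P}[\tilde{T}_{n+1} = A]$, completing the induction. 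The main obstacle is the bookkeeping of exponents of $p$ and $1-p$ across the two groups of nodes of $C$; the crucial observation making the argument go through cleanly is recognizing $\mathbb{E}[(1-p)^{|W|}] = 1-p$ as the extinction fixed-point identity, which is exactly what allows the "dead" nodes in $C \setminus \pi(A)$ to be absorbed into the right factors.
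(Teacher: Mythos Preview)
Your proof is correct and follows essentially the same inductive scheme as the paper: both arguments factor $\mathbb{P}[\tilde{T}_{n}=A]$ through the previous generation via the product structure of Galton--Watson trees, apply the definition of $W'$ to the nodes in $\pi(A)$ (the paper's $F_{n-1}$), and match the remaining factor with the induction hypothesis. The only difference is presentational: the paper combines the two resulting factors by interpreting them probabilistically as independent events on disjoint parts of $T$, whereas you carry out the same combination algebraically, making explicit use of the extinction fixed-point identity $\mathbb{E}\big[(1-p)^{|W|}\big]=1-p$ to collapse the contribution of the nodes in $C\setminus\pi(A)$; these are two expressions of the same fact.
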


\begin{proof}
The lemma is proved by induction on $n$. For $n=1$ the claim is
trivial. 

Assume this holds for $n-1$ and let $A\subseteq\mathbb{A}^{n}$.
Denote by $F$ the unique tree of height $n$ s.t. $F_{n}=A$ and
$\left[F\right]\cap\mathscr{T}_{\mathbb{A}}^{\prime}\neq\emptyset$.
Note that $\tilde{T}_{n}=A$ iff $\tilde{T}_{n-1}=F_{n-1}$ and for
every $v\in F_{n-1}$, $W_{\tilde{T}}\left(v\right)=W_{F}\left(v\right)$.
Hence, 
\[
\begin{array}{l}
\mathbb{P}\left[\tilde{T}_{n}=A\right]=\\
\mathbb{P}\left[\tilde{T}_{n-1}=F_{n-1}\right]\cdot\prod\limits _{v\in F_{n-1}}\mathbb{P}\left[W^{\prime}=W_{F}\left(v\right)\right]=\\
\left(\dfrac{1}{p}\sum\limits _{F_{n-1}\subseteq B}\mathbb{P}\left[T_{n}=B\right]\cdot p^{\left|F_{n-1}\right|}\cdot\left(1-p\right)^{\left|B\setminus F_{n-1}\right|}\right)\cdot\\
\left(\prod\limits _{v\in F_{n-1}}\dfrac{1}{p}\sum\limits _{W_{F}\left(v\right)\subseteq B}\mathbb{P}\left[W=B\right]\cdot p^{\left|W_{F}\left(v\right)\right|}\cdot\left(1-p\right)^{\left|B\setminus W_{F}\left(v\right)\right|}\right)=\\
\dfrac{1}{p}\cdot\underset{\Xi}{\underbrace{\left(\sum\limits _{F_{n-1}\subseteq B}\mathbb{P}\left[T_{n}=B\right]\cdot\left(1-p\right)^{\left|B\setminus F_{n-1}\right|}\right)}}\cdot\underset{\Upsilon}{\underbrace{\left(\prod\limits _{v\in F_{n-1}}\sum\limits _{W_{F}\left(v\right)\subseteq B}\mathbb{P}\left[W=B\right]\cdot p^{\left|W_{F}\left(v\right)\right|}\cdot\left(1-p\right)^{\left|B\setminus W_{F}\left(v\right)\right|}\right)}}
\end{array}
\]
Now, let us interpret the expressions $\Xi,\,\Upsilon$. $\Xi$ may
be interpreted as the probability of the following event: 
\[
F_{n-1}\subseteq T_{n-1}\text{ and }\forall v\in T_{n-1}\setminus F_{n-1},\,T^{v}\notin\mathscr{T}_{\mathbb{A}}^{\infty},
\]
and $\Upsilon$ may be interpreted as the probability of the event:
\[
\forall v\in F_{n-1},\,\left(W_{F}\left(v\right)\subseteq W_{v}\right)\text{ and }\left(\forall w\in W_{F}\left(v\right),\,T^{w}\in\mathscr{T}_{\mathbb{A}}^{\infty}\right)\text{ and }\left(\forall w\in W_{v}\setminus W_{F}\left(v\right),\,T^{w}\notin\mathscr{T}_{\mathbb{A}}^{\infty}\right).
\]
These events are independent and so the product of their probabilities
is the probability of their intersection. Their intersection is equivalent
to the following event:

\[
F_{n}\subseteq T_{n}\text{ and }\left(\forall w\in F_{n},\,T^{w}\in\mathscr{T}_{\mathbb{A}}^{\infty}\right)\text{ and }\left(\forall w\in T_{n}\setminus F_{n},\,T^{w}\notin\mathscr{T}_{\mathbb{A}}^{\infty}\right),
\]
whose probability is exactly 
\[
\dfrac{1}{p}\sum\limits _{A\subseteq B}\mathbb{P}\left[T_{n}=B\right]\cdot p^{\left|A\right|}\cdot\left(1-p\right)^{\left|B\setminus A\right|}.
\]
\end{proof}
\begin{proof}[proof of Theorem \ref{thm:induced tree is GWT}]

Let $\tilde{T}$ be a GWT generated according to the offspring distribution
$W^{\prime}$. We need to show that $\tilde{T}$ has the same law
as $T^{\prime}$ (conditioned on non-extinction). To do that, it is
enough to show that for every finite tree $F$ s.t. $\left[F\right]\cap\mathscr{T}_{\mathbb{A}}^{\prime}$,
\[
\mathbb{P}\left[T^{\prime}\in\left[F\right]\vert\,T\in\mathscr{T}_{\mathbb{A}}^{\infty}\right]=\mathbb{P}\left[\tilde{T}\in\left[F\right]\right].
\]
As mentioned above, $T^{\prime}\in\left[F\right]\iff T_{n}^{\prime}=F_{n}$
where $n$ is the height of $F$, and since $\tilde{T}$ has no leaves,
the same holds for $\tilde{T},$i.e. $\tilde{T}\in\left[F\right]\iff\tilde{T}_{n}=F_{n}$.
Therefore,

\[
\begin{array}{l}
\mathbb{P}\left[T^{\prime}\in\left[F\right]\vert\,T\in\mathscr{T}_{\mathbb{A}}^{\infty}\right]=\\
\mathbb{P}\left[T_{n}^{\prime}=F_{n}\vert\,T\in\mathscr{T}_{\mathbb{A}}^{\infty}\right]=\\
\mathbb{P}\left[T_{n}\supseteq F_{n}\text{ and }\left(\forall v\in F_{n},\,T^{v}\in\mathscr{T}_{\mathbb{A}}^{\infty}\right)\text{ and }\left(\forall v\in T_{n}\setminus F_{n},\,T^{v}\notin\mathscr{T}_{\mathbb{A}}^{\infty}\right)\vert\,T\in\mathscr{T}_{\mathbb{A}}^{\infty}\right]=\\
\dfrac{1}{p}\cdot\mathbb{P}\left[T_{n}\supseteq F_{n}\text{ and }\left(\forall v\in F_{n},\,T^{v}\in\mathscr{T}_{\mathbb{A}}^{\infty}\right)\text{ and }\left(\forall v\in T_{n}\setminus F_{n},\,T^{v}\notin\mathscr{T}_{\mathbb{A}}^{\infty}\right)\right]=\\
\dfrac{1}{p}\sum\limits _{F_{n}\subseteq B\subseteq\mathbb{A}}\mathbb{P}\left[T_{n}=B\right]\cdot p^{\left|F_{n}\right|}\cdot\left(1-p\right)^{\left|B\setminus F_{n}\right|}=\\
\mathbb{P}\left[\tilde{T}_{n}=F_{n}\right]=\mathbb{P}\left[\tilde{T}\in\left[F\right]\right]
\end{array}
\]
\end{proof}
Given a GWT $T$, as mentioned in the beginning of the section, its
probability distribution is denoted here by $\mathscr{G}$ and conditioned
on non-extinction by $\mathscr{G}^{\infty}$. From this point forward,
we also denote the probability distribution of $T^{\prime}$ (conditioned
on non-extinction) by $\mathscr{G}^{\prime}$. 

\subsection{The supports of the measures\label{subsec:The-supports-of-the-measures}}

Given any random variable $X$, we denote by $\supp\left(X\right)$
the support of its distribution. In what follows we often (but not
always) use this notation instead of addressing explicitly to the
probability distribution of the random variable.

Let $T$ be a GWT on the alphabet $\mathbb{A}$ with an offspring
distribution $W$. Keeping the notations introduced in the beginning
of the section, the probability distribution of $T$ is denoted by
$\mathscr{G}$. It may be easily seen that
\begin{equation}
\supp\left(\mathscr{G}\right)=\left\{ S\in\mathscr{T}_{\mathbb{A}}:\,\forall v\in S,\,W_{S}\left(v\right)\in\supp\left(W\right)\right\} ,\label{eq:support of T}
\end{equation}
 and
\begin{equation}
\supp\left(\mathscr{G}^{\infty}\right)=\supp\left(\mathscr{G}\right)\cap\mathscr{T}_{\mathbb{A}}^{\infty}.\label{eq:support of infinite T}
\end{equation}
Assuming that $\mathbb{P}\left[W=\emptyset\right]>0$, since according
to Theorem \ref{thm:induced tree is GWT} $T^{\prime}$ is itself
a GWT with an offspring distribution $W^{\prime}$, such that 
\[
\supp\left(W^{\prime}\right)=\left\{ B\in2^{\mathbb{A}}\setminus\left\{ \emptyset\right\} :\,\exists A\in\supp\left(W\right),\,B\subseteq A\right\} ,
\]
the support of $T^{\prime}$ is given by
\begin{equation}
\supp\left(\mathscr{G}^{\prime}\right)=\left\{ S\in\mathbb{\mathscr{T}}_{\mathbb{A}}^{\prime}:\,\forall v\in S,\,\exists A\in\supp\left(W\right),\,\emptyset\neq W_{S}\left(v\right)\subseteq A\right\} .\label{eq:support of T'}
\end{equation}

\begin{prop}
Let $T$ be a GWT on the alphabet $\mathbb{A}$ and with an offspring
distribution $W$. Then 
\[
\left\{ S^{\prime}:\,S\in\supp\left(\mathscr{G}^{\infty}\right)\right\} =\supp\left(\mathscr{G}^{\prime}\right).
\]
\end{prop}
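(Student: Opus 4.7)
The plan is to prove both inclusions separately, using the explicit characterizations of $\supp(\mathscr{G}^{\infty})$ and $\supp(\mathscr{G}^{\prime})$ given in Equations (\ref{eq:support of infinite T}) and (\ref{eq:support of T'}).

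For the inclusion $\{S^{\prime}:S\in\supp(\mathscr{G}^{\infty})\}\subseteq\supp(\mathscr{G}^{\prime})$, I would fix any $S\in\supp(\mathscr{G}^{\infty})$ and verify that $S^{\prime}$ meets the characterization of $\supp(\mathscr{G}^{\prime})$. By construction $S^{\prime}\in\mathscr{T}_{\mathbb{A}}^{\prime}$, and for $v\in S^{\prime}$ we have $W_{S^{\prime}}(v)=\{i\in W_{S}(v):S^{vi}\in\mathscr{T}_{\mathbb{A}}^{\infty}\}$. Since $v\in S^{\prime}$ means $\partial S^{v}\neq\emptyset$, any infinite ray in $S^{v}$ begins with some letter $i$ for which $\partial S^{vi}\neq\emptyset$, so $W_{S^{\prime}}(v)\neq\emptyset$. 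Taking $A=W_{S}(v)\in\supp(W)$ gives an element of $\supp(W)$ containing $W_{S^{\prime}}(v)$, which completes the verification.

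For the reverse inclusion, given $S\in\supp(\mathscr{G}^{\prime})$ the goal is to exhibit some $R\in\supp(\mathscr{G}^{\infty})$ with $R^{\prime}=S$. The edge case $\mathbb{P}[W=\emptyset]=0$ is immediate: then $T$ is a.s.\ leafless and infinite, so $\mathscr{G}^{\infty}=\mathscr{G}^{\prime}$ and there is nothing to prove. Otherwise $\emptyset\in\supp(W)$, and for each $v\in S$ I would use the characterization of $\supp(\mathscr{G}^{\prime})$ to pick some $A_{v}\in\supp(W)$ with $W_{S}(v)\subseteq A_{v}$. Set
\[
R=S\cup\{vi:v\in S,\ i\in A_{v}\setminus W_{S}(v)\}.
\]
This is a tree (closed under prefixes, since each prefix of $vi$ lies in $S$). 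By construction every $v\in S$ has $W_{R}(v)=A_{v}\in\supp(W)$, and every $w\in R\setminus S$ is a leaf of $R$, so $W_{R}(w)=\emptyset\in\supp(W)$. Thus $R\in\supp(\mathscr{G})$ by (\ref{eq:support of T}), and since $R\supseteq S$ is infinite, $R\in\supp(\mathscr{G}^{\infty})$ by (\ref{eq:support of infinite T}).

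To finish, I would check $R^{\prime}=S$ exactly: for $v\in S$, $R^{v}\supseteq S^{v}$ is infinite (as $S$ is leafless), so $v\in R^{\prime}$; conversely every $w\in R\setminus S$ is a leaf of $R$, so $R^{w}$ is finite and $w\notin R^{\prime}$. The main (small) technical obstacle is the bookkeeping of the extension $R$ and remembering to handle the $\mathbb{P}[W=\emptyset]=0$ case separately so that the construction does not silently require $\emptyset\in\supp(W)$; beyond that, the proof is a direct unwinding of the three support characterizations.
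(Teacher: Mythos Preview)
Your proposal is correct and follows essentially the same route as the paper: the inclusion $\subseteq$ is read off directly from the support characterizations, and for $\supseteq$ you build an extension of $S$ by enlarging each $W_S(v)$ to some $A_v\in\supp(W)$ and declaring the added children to be leaves, exactly as the paper does. Your explicit handling of the case $\mathbb{P}[W=\emptyset]=0$ is a small bit of extra care that the paper omits (the surrounding discussion in the paper tacitly assumes $\mathbb{P}[W=\emptyset]>0$, which is what makes $\emptyset\in\supp(W)$ and hence the leaf-attachment legitimate).
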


\begin{proof}
In view of the Equations (\ref{eq:support of T}), (\ref{eq:support of infinite T}),
(\ref{eq:support of T'}), the inclusion $\subseteq$ is trivial.

For the other inclusion, assume $S\in\supp\left(T^{\prime}\right)$.
By Equation (\ref{eq:support of T'}), for every $v\in S$, $W_{S}\left(v\right)\subseteq B$
for some $B$ in $\supp\left(W\right)$. Choose for every $v\in S$
such a set $B_{v}$, and construct a new tree $P\supseteq S$ by
setting for every $v\in S$, $W_{P}\left(v\right)=B_{v}$ and for
every $w\in W_{P}\left(v\right)\setminus W_{S}\left(v\right)$, set
$W_{P}\left(w\right)=\emptyset$. Then clearly $P\in\supp\left(T\right)$
and $P^{\prime}=S$.

\end{proof}
Recall that according to Lemma \ref{lem:Projection of trees without leaves is continuous},
the map $\Gamma_{\Phi}\restriction_{\mathbb{\mathscr{T}}_{\Lambda}^{\prime}}:\mathbb{\mathscr{T}}_{\Lambda}^{\prime}\to\Omega_{\Phi}$
is continuous, and therefore %
\[
\Gamma_{\Phi}\left(\supp\left(\mathscr{G}^{\prime}\right)\right)=\supp\left(E\right).
\]
Combining this with the above Proposition, one obtains
\begin{lem}
\label{lem:supp(E)=00003Dprojection of supp(T^infty)}Let $E$ be
a GWF as above, then
\[
\supp\left(E\right)=\Gamma_{\Phi}\left(\supp\left(\mathscr{G}^{\infty}\right)\right)
\]
\end{lem}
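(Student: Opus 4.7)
My plan is to factor the projection $\Gamma_\Phi$ through the reduction map $R:\mathscr{T}_\Lambda^\infty\to\mathscr{T}_\Lambda'$, $R(T)=T'$, and exploit the continuity of $\Gamma_\Phi$ restricted to $\mathscr{T}_\Lambda'$ (Lemma~\ref{lem:Projection of trees without leaves is continuous}) together with the preceding proposition identifying the support of $\mathscr{G}'$ with $\{S':S\in\supp(\mathscr{G}^\infty)\}$.

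First, I would observe that for every $T\in\mathscr{T}_\Lambda^\infty$ we have $\partial T=\partial T'$, so $\Gamma_\Phi(T)=\Gamma_\Phi(T')$, which means the restriction $\pi:=\Gamma_\Phi\restriction_{\mathscr{T}_\Lambda'}$ satisfies $\Gamma_\Phi=\pi\circ R$ on $\mathscr{T}_\Lambda^\infty$. Consequently the distribution of $E$ equals $\pi_*(R_*\mathscr{G}^\infty)=\pi_*\mathscr{G}'$.

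Next I would apply the standard fact that, for a continuous map between compact metrizable spaces, the support of the pushforward of a Borel probability measure is the image of the support. Here $\mathscr{T}_\Lambda'$ is compact and $\pi$ is continuous, so $\supp(\mathscr{G}')$ is a closed (hence compact) subset of $\mathscr{T}_\Lambda'$, and $\pi(\supp(\mathscr{G}'))$ is therefore compact (in particular closed) in $\Omega_K$. Thus $\supp(E)=\supp(\pi_*\mathscr{G}')=\pi(\supp(\mathscr{G}'))$.

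Finally, using the preceding proposition, $\supp(\mathscr{G}')=\{S':S\in\supp(\mathscr{G}^\infty)\}$, so
\[
\supp(E)=\pi(\supp(\mathscr{G}'))=\{\Gamma_\Phi(S'):S\in\supp(\mathscr{G}^\infty)\}=\{\Gamma_\Phi(S):S\in\supp(\mathscr{G}^\infty)\}=\Gamma_\Phi(\supp(\mathscr{G}^\infty)),
\]
where the penultimate equality again uses $\Gamma_\Phi(S)=\Gamma_\Phi(S')$ for $S\in\mathscr{T}_\Lambda^\infty$. The main subtlety — and the reason the factorization is needed — is that $\Gamma_\Phi$ itself is only Borel (not continuous) on $\mathscr{T}_\Lambda^\infty$, so one cannot naively invoke ``continuous image of support equals support of pushforward'' directly on $\mathscr{G}^\infty$; passing through $R$ and $\pi$ circumvents this, and the previously proven identity $\{S':S\in\supp(\mathscr{G}^\infty)\}=\supp(\mathscr{G}')$ does the remaining bookkeeping.
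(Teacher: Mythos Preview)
Your proof is correct and follows essentially the same approach as the paper: both factor through the reduction map, use the continuity of $\Gamma_\Phi\restriction_{\mathscr{T}_\Lambda'}$ to identify $\supp(E)=\Gamma_\Phi(\supp(\mathscr{G}'))$, and then invoke the preceding proposition $\supp(\mathscr{G}')=\{S':S\in\supp(\mathscr{G}^\infty)\}$ together with $\Gamma_\Phi(S)=\Gamma_\Phi(S')$. Your write-up is somewhat more explicit about why the factorization is needed (the lack of continuity of $\Gamma_\Phi$ on $\mathscr{T}_\Lambda^\infty$), but the argument is the same.
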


Recall the notation $\mathscr{D}_{T}=\left\{ T^{v}:\,v\in T\right\} $
for a tree $T$. A direct consequence of Proposition \ref{prop:0-1 law}
is the following.
\begin{prop}
\label{prop:descendant trees are a.s. dense}Let $T$ be a supercritical
GWT on the alphabet $\mathbb{A}$, and let $\mathscr{G}$ be the corresponding
measure on $\mathscr{T}_{\mathbb{A}}$. Then a.s. conditioned on non-extinction,
\[
\overline{\mathscr{D}_{T}}=\supp\left(\mathscr{G}\right).
\]

\end{prop}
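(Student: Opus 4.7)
The plan is to prove the double inclusion $\overline{\mathscr{D}_T} = \supp(\mathscr{G})$ almost surely conditioned on non-extinction, noting that both sides are closed subsets of $\mathscr{T}_{\mathbb{A}}$ and that $\supp(\mathscr{G})$ is already known explicitly via Equation (\ref{eq:support of T}).

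For the inclusion $\overline{\mathscr{D}_T} \subseteq \supp(\mathscr{G})$, I would exploit the branching property. For each $v \in \mathbb{A}^*$, consider the tree $T(v)$ built from the offspring variables $\{W_{vw} : w \in \mathbb{A}^*\}$ (not conditioned on $v \in T$); by construction $T(v)$ has distribution $\mathscr{G}$, so $\mathbb{P}[T(v) \in \supp(\mathscr{G})] = 1$. Since $\mathbb{A}^*$ is countable, the intersection of these events over all $v \in \mathbb{A}^*$ still has full probability. On this event, whenever $v \in T$ we have $T^v = T(v) \in \supp(\mathscr{G})$, so $\mathscr{D}_T \subseteq \supp(\mathscr{G})$, and the inclusion follows by taking closures.

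For the inclusion $\supp(\mathscr{G}) \subseteq \overline{\mathscr{D}_T}$, let
\[
\mathscr{F} = \{F \in \mathscr{T}_{\mathbb{A}} : F \text{ is finite and } \mathscr{G}([F]) > 0\},
\]
which is a countable collection since the set of finite trees over $\mathbb{A}$ is countable. For each $F \in \mathscr{F}$, apply Proposition \ref{prop:0-1 law} with $\mathscr{S} = [F]$ to conclude that a.s.\ conditioned on non-extinction there exist (infinitely many) $v \in T$ with $T^v \in [F]$. Intersecting over the countable family $\mathscr{F}$ produces a single a.s.\ event on which $\mathscr{D}_T \cap [F] \neq \emptyset$ for every $F \in \mathscr{F}$. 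On this event, given any $S \in \supp(\mathscr{G})$ and any basic open neighborhood $[F]$ of $S$ (with $F$ finite), the fact that $S \in [F]$ combined with $S \in \supp(\mathscr{G})$ forces $\mathscr{G}([F]) > 0$, hence $F \in \mathscr{F}$; thus $\mathscr{D}_T$ hits every basic neighborhood of $S$, proving $S \in \overline{\mathscr{D}_T}$.

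The only subtle point — and where the argument could go wrong — is making sure that the exceptional null sets are handled uniformly rather than pointwise in $v$ or in $S$; this is precisely what the countability of $\mathbb{A}^*$ (for the first inclusion) and of $\mathscr{F}$ (for the second) secures. The 0--1 style statement in Proposition \ref{prop:0-1 law} does the real work in the second inclusion, and everything else reduces to the observation that the cylinders $[F]$ form a countable basis for the topology of $\mathscr{T}_{\mathbb{A}}$.
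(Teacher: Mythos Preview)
Your proposal is correct and essentially identical to the paper's own argument: both inclusions are proved exactly as you describe, using the branching property with the auxiliary trees $T(v)$ for $\overline{\mathscr{D}_T}\subseteq\supp(\mathscr{G})$, and Proposition~\ref{prop:0-1 law} applied to the countable family of positive-measure cylinders for the reverse inclusion. Your explicit remark about why countability lets you swap the null set with the quantifier over $v$ (resp.\ over $F$) is precisely the point the paper highlights as well.
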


Since $T^{\prime}$ is also a GWT, Proposition \ref{prop:descendant trees are a.s. dense}
may be applied to $\mathscr{G}^{\prime}$, and so we obtain that a.s.
conditioned on non-extinction,
\[
\overline{\mathscr{D}_{T^{\prime}}}=\supp\left(\mathscr{G}^{\prime}\right),
\]
Now, given a similarity IFS $\Phi=\left\{ \varphi_{i}\right\} _{i\in\Lambda}$
and a tree $T\in\mathscr{T}_{\Lambda}^{\prime}$, recall the notation
$\mathcal{S}_{\Phi}^{T}=\Gamma_{\Phi}\left(\mathscr{D}_{T}\right)$,
and the fact that $\overline{\mathcal{S}_{\Phi}^{T}}=\overline{\Gamma_{\Phi}\left(\mathscr{D}_{T}\right)}=\Gamma_{\Phi}\left(\overline{\mathscr{D}_{T}}\right)$
(Equation (\ref{eq:change order of closure and Gamma_=00005CPhi})).
Summarizing the above, the following holds:
\begin{thm}
\label{thm:supp of GWF =00003D limit branch sets a.s.}Let $T$ be
a supercritical Galton-Watson tree w.r.t. a similarity IFS $\Phi$,
and with $E$, the corresponding Galton-Watson fractal. Then

\[
\supp\left(E\right)=\Gamma_{\Phi}\left(\supp\left(\mathscr{G}^{\prime}\right)\right)=\Gamma_{\Phi}\left(\supp\left(\mathscr{G}^{\infty}\right)\right),
\]
and a.s. conditioned on non-extinction,
\[
\supp\left(E\right)=\overline{\mathcal{S}_{\Phi}^{T^{\prime}}}.
\]
\end{thm}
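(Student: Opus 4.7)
The plan is to assemble the three equalities directly from the machinery already developed in Subsection~\ref{subsec:The-supports-of-the-measures}. The first equality $\supp(E) = \Gamma_{\Phi}(\supp(\mathscr{G}^{\infty}))$ is exactly the content of Lemma~\ref{lem:supp(E)=00003Dprojection of supp(T^infty)}, so nothing new is needed there. For the middle equality $\Gamma_{\Phi}(\supp(\mathscr{G}^{\infty})) = \Gamma_{\Phi}(\supp(\mathscr{G}^{\prime}))$, I would invoke the preceding proposition asserting $\{S' : S \in \supp(\mathscr{G}^{\infty})\} = \supp(\mathscr{G}^{\prime})$, combined with the elementary observation (noted just before Proposition on the map $R$ being Borel) that $\partial S = \partial S'$ for every $S \in \mathscr{T}_{\Lambda}^{\infty}$, so that $\Gamma_{\Phi}(S) = \Gamma_{\Phi}(S')$. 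These two facts together force the two images under $\Gamma_{\Phi}$ to coincide.

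For the almost sure identity $\supp(E) = \overline{\mathcal{S}_{\Phi}^{T'}}$, the key observation is that by Theorem~\ref{thm:induced tree is GWT} (in the case $\mathbb{P}[W = \emptyset] > 0$) the reduced tree $T'$, conditioned on non-extinction of $T$, is itself a Galton-Watson tree, with offspring distribution $W'$ satisfying $\mathbb{P}[W' = \emptyset] = 0$, so $T'$ is \emph{always} infinite and the offspring mean is still $> 1$ (supercriticality passes to $W'$, or one just notes that $T'$ is almost surely infinite). Therefore Proposition~\ref{prop:descendant trees are a.s. dense}, applied to $T'$ (whose law is $\mathscr{G}^{\prime}$), yields almost surely
\[
\overline{\mathscr{D}_{T'}} = \supp(\mathscr{G}^{\prime}).
\]
Applying $\Gamma_{\Phi}$ to both sides, using its continuity on $\mathscr{T}_{\Lambda}^{\prime}$ (Lemma~\ref{lem:Projection of trees without leaves is continuous}) which by compactness allows closure to commute with $\Gamma_{\Phi}$ (this is Equation~(\ref{eq:change order of closure and Gamma_=00005CPhi})), produces
\[
\overline{\mathcal{S}_{\Phi}^{T'}} = \Gamma_{\Phi}(\overline{\mathscr{D}_{T'}}) = \Gamma_{\Phi}(\supp(\mathscr{G}^{\prime})) = \supp(E),
\]
where the last equality is the deterministic middle equality already established.

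The remaining case $\mathbb{P}[W = \emptyset] = 0$ needs only a line: extinction cannot occur, $T = T'$ almost surely, and $\mathscr{G} = \mathscr{G}^{\infty} = \mathscr{G}^{\prime}$, so Proposition~\ref{prop:descendant trees are a.s. dense} applies directly to $T$ and the same argument concludes. I do not foresee a real obstacle here, since every ingredient has been assembled in the preceding subsection; the only point that deserves a moment of care is verifying that Proposition~\ref{prop:descendant trees are a.s. dense} is applicable to $T'$, which is immediate once one recognizes (via Theorem~\ref{thm:induced tree is GWT}) that $T'$ has a genuine Galton-Watson law and never goes extinct, so the phrase ``almost surely, conditioned on non-extinction'' in that proposition becomes simply ``almost surely.''
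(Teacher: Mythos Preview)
Your proposal is correct and follows essentially the same route as the paper, which in fact states the theorem as a direct summary (``Summarizing the above'') of the preceding results: Lemma~\ref{lem:supp(E)=00003Dprojection of supp(T^infty)}, the proposition giving $\{S':S\in\supp(\mathscr{G}^\infty)\}=\supp(\mathscr{G}')$, the continuity of $\Gamma_\Phi$ on $\mathscr{T}_\Lambda'$, and Proposition~\ref{prop:descendant trees are a.s. dense} applied to $T'$ via Theorem~\ref{thm:induced tree is GWT}. Your write-up is if anything more explicit than the paper's, including the harmless case split on $\mathbb{P}[W=\emptyset]$.
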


Recall that $\supp\left(\mathscr{G}^{\prime}\right)$ was formulated
explicitly in (\ref{eq:support of T'}). 

Now that we have analyzed the set $\overline{\mathcal{S}_{\Phi}^{T^{\prime}}}$,
we shall apply the results of Section \ref{sec:Microsets-via-coding}.

\subsection{Trees with a prescribed set of allowed configurations}

Given a finite alphabet $\mathbb{A}$, and a collection $\mathscr{A}\subseteq2^{\mathbb{A}}$,
a tree $T\in\mathscr{T}_{\mathbb{A}}$ is called an $\mathscr{A}\text{-tree}$
if for every $v\in T$, $W_{T}\left(v\right)\in\mathscr{A}$.

For an IFS $\Phi=\left\{ \varphi_{i}\right\} _{i\in\Lambda}$, and
every non-empty set $A\subseteq\Lambda$, we denote the attractor
of the IFS $\left\{ \varphi_{i}\right\} _{i\in A}$ by $K_{A}$. Obviously,
$K_{A}\subseteq K$ where $K=K_{\Lambda}$ is the attractor of $\Phi$. 

In the rest of this subsection we are about to prove the following
Theorem:
\begin{thm}
\label{thm:dimension of A-trees is an interval}Given a similarity
IFS $\Phi=\left\{ \varphi_{i}\right\} _{i\in\Lambda}$ which satisfies
the OSC, and a non-empty collection $\mathscr{A}\subseteq2^{\Lambda}$,
denote 
\[
m_{\mathscr{A}}=\min\limits _{A\in\mathscr{A}}\left(\dimh\left(K_{A}\right)\right)\text{ and }M_{\mathscr{A}}=\max\limits _{A\in\mathscr{A}}\left(\dimh\left(K_{A}\right)\right),
\]
where in case $\emptyset\in\mathscr{A}$, we set $\dimh\left(K_{\emptyset}\right)=0$
(and so $m_{\mathscr{A}}=0$). Then the following equality holds:
\[
\left\{ \dimh\left(\Gamma_{\Phi}\left(T\right)\right):\,T\text{ is an infinite }\mathscr{A}\text{-tree}\right\} =\left[m_{\mathscr{A}},M_{\mathscr{A}}\right].
\]
\end{thm}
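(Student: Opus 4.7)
The plan is to prove the two inclusions separately. Write $s_A := \dimh K_A$ and $\alpha_A(s) := \sum_{i \in A} r_i^s$; by Moran's theorem, under the OSC one has $\alpha_A(s_A) = 1$ for every non-empty $A \subseteq \Lambda$, so by monotonicity of $\alpha_A$ in $s$, every $A \in \mathscr{A}$ satisfies $\alpha_A(M_\mathscr{A}) \le 1$ and $\alpha_A(m_\mathscr{A}) \ge 1$ (with the convention $\alpha_\emptyset \equiv 0$ covering the case $\emptyset \in \mathscr{A}$, in which $m_\mathscr{A} = 0$).

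For the inclusion $\subseteq$, take any infinite $\mathscr{A}$-tree $T$. A recursive expansion along the section $\Pi_\rho$ using the identity $\sum_{v \in T \cap \Pi_\rho} r_v^s = \sum_{i \in W_T(\emptyset)} r_i^s \cdot \sum_{w \in T^i \cap \Pi_{\rho/r_i}} r_w^s$ gives, for every $\rho > 0$,
\[
\sum_{v \in T \cap \Pi_\rho} r_v^{M_\mathscr{A}} \le 1, \qquad \sum_{v \in T \cap \Pi_\rho} r_v^{m_\mathscr{A}} \ge 1,
\]
where the second bound requires $\emptyset \notin \mathscr{A}$ (otherwise the lower bound is vacuous). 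Since $\{\varphi_v(K) : v \in T \cap \Pi_\rho\}$ covers $\Gamma_\Phi(T)$ by sets of diameter at most $\rho \cdot \mathrm{diam}(K)$, the upper bound yields $\mathcal{H}^{M_\mathscr{A}}(\Gamma_\Phi(T)) < \infty$ and hence $\dimh\Gamma_\Phi(T) \le M_\mathscr{A}$. For the matching lower bound $\dimh\Gamma_\Phi(T) \ge m_\mathscr{A}$, I would define a probability measure $\mu$ on $\Gamma_\Phi(T)$ recursively by $\mu(\varphi_{vi} K) := \mu(\varphi_v K) \cdot r_i^{m_\mathscr{A}}/\alpha_{W_T(v)}(m_\mathscr{A})$ for $i \in W_T(v)$; since every denominator is at least $1$, iterating gives $\mu(\varphi_v K) \le r_v^{m_\mathscr{A}}$. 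Because OSC implies WSC, any ball of radius $\rho$ meets at most a uniformly bounded number of section-cylinders, so $\mu(B_\rho(x)) \lesssim \rho^{m_\mathscr{A}}$ and the mass distribution principle concludes.

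For the inclusion $\supseteq$, the endpoints are realized by the trees in which every vertex's offspring equals the extremal $A_{\min}$ or $A_{\max} \in \mathscr{A}$ achieving the min/max dimensions; these project to $K_{A_{\min}}$ and $K_{A_{\max}}$. For an intermediate target, fix $\alpha \in (0,1)$ and let $T_\alpha$ be the $\mathscr{A}$-tree in which every vertex at level $k-1$ has offspring $A_{\max}$ when $\lfloor k\alpha \rfloor > \lfloor (k-1)\alpha \rfloor$ and $A_{\min}$ otherwise. Writing $N_n$ for the number of ``$A_{\max}$-levels'' up to $n$, so $N_n/n \to \alpha$, the Moran sum at level $n$ factors as $\sum_{v \in (T_\alpha)_n} r_v^s = \alpha_{A_{\max}}(s)^{N_n} \alpha_{A_{\min}}(s)^{n-N_n}$, whose exponential rate vanishes precisely at the unique $s = s^\ast(\alpha)$ solving
\[
\alpha \log \alpha_{A_{\max}}(s) + (1-\alpha) \log \alpha_{A_{\min}}(s) = 0.
\]
Adapting the two-sided Moran/Frostman argument of the previous paragraph to this level-dependent setting yields $\dimh \Gamma_\Phi(T_\alpha) = s^\ast(\alpha)$, and since $s^\ast$ is continuous with $s^\ast(0) = m_\mathscr{A}$ and $s^\ast(1) = M_\mathscr{A}$, the intermediate value theorem produces every value in $[m_\mathscr{A}, M_\mathscr{A}]$.

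The main obstacle will be proving $\dimh \Gamma_\Phi(T_\alpha) \ge s^\ast(\alpha)$ for irrational $\alpha$. For rational $\alpha = p/(p+q)$ the level sequence is periodic, so $\Gamma_\Phi(T_\alpha)$ is the attractor of the composed IFS $\{\varphi_{u_1} \circ \cdots \circ \varphi_{u_{p+q}} : u_k \in A_{\epsilon_k}\}$, which inherits OSC from $\Phi$ and has dimension given exactly by the equation defining $s^\ast(\alpha)$. For irrational $\alpha$ the non-uniform contraction ratios make a given section $\Pi_\rho$ cut across several tree-levels of $T_\alpha$ at once, so verifying the Frostman bound $\mu(B_\rho(x)) \lesssim \rho^{s^\ast(\alpha)}$ for the natural measure on $\Gamma_\Phi(T_\alpha)$ requires careful matching of the partial products $\prod_{k=1}^n \alpha_{A_{\epsilon_k}}(s^\ast(\alpha))$ with the geometric scale $\rho$ along each branch.
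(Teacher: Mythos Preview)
Your argument for the inclusion $\subseteq$ is correct and close in spirit to the paper's: the paper also bounds the Moran sums $\sum_{v\in T\cap\Pi_\rho}r_v^s$ from above and below (via an induction on sections), obtaining the upper bound on $\dimh\Gamma_\Phi(T)$ exactly as you do; for the lower bound the paper extracts, for each $\varepsilon>0$, a $\rho^{-(m_{\mathscr{A}}-\varepsilon)}$-ary subtree projecting to an Ahlfors-regular set of dimension $m_{\mathscr{A}}-\varepsilon$, rather than building a Frostman measure directly. These are interchangeable standard devices.

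For the inclusion $\supseteq$ your approach is genuinely different from the paper's. The paper avoids any explicit construction: it takes the offspring distribution $W_t=t\,\delta_{A_{\min}}+(1-t)\,\delta_{A_{\max}}$, invokes the Falconer--Mauldin--Williams formula for the a.s.\ dimension of the associated Galton--Watson fractal, and uses continuity of $t\mapsto s_t$ to hit every value in $[m_{\mathscr{A}},M_{\mathscr{A}}]$. This is slick because the dimension computation is already packaged in that theorem. Your deterministic level-alternating tree $T_\alpha$ is more elementary (no probability needed) but forces you to compute $\dimh\Gamma_\Phi(T_\alpha)$ by hand.

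Regarding the obstacle you flag: it is not a real obstacle. With your definition, $N_n=\lfloor n\alpha\rfloor$, so $|N_n-n\alpha|<1$ uniformly in $n$. Since $s^\ast=s^\ast(\alpha)$ satisfies $\alpha\log\alpha_{A_{\max}}(s^\ast)+(1-\alpha)\log\alpha_{A_{\min}}(s^\ast)=0$, one has
\[
\log\prod_{k=1}^{n}\alpha_{A_{\epsilon_k}}(s^\ast)=(N_n-n\alpha)\bigl(\log\alpha_{A_{\max}}(s^\ast)-\log\alpha_{A_{\min}}(s^\ast)\bigr),
\]
which is bounded uniformly in $n$. Hence the natural measure $\mu$ with $\mu([v])=r_v^{s^\ast}\big/\prod_{k=1}^{|v|}\alpha_{A_{\epsilon_k}}(s^\ast)$ satisfies $\mu([v])\asymp r_v^{s^\ast}$ uniformly. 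Any $v\in T_\alpha\cap\Pi_\rho$ has $r_v\in(\rho\,r_{\min},\rho]$, so $\mu([v])\asymp\rho^{s^\ast}$; combined with the WSC bound on the number of section-cylinders meeting a $\rho$-ball, this gives $\mu(B_\rho(x))\lesssim\rho^{s^\ast}$ and the mass distribution principle yields $\dimh\Gamma_\Phi(T_\alpha)\ge s^\ast(\alpha)$. The fact that $\Pi_\rho$ cuts across several tree-levels is harmless precisely because these levels differ by at most a constant (depending only on $r_{\min},r_{\max}$) and the partial products above are uniformly bounded. So your programme goes through without further difficulty.
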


Note that it is enough to prove the Theorem for the case $\emptyset\notin\mathscr{A}$.
Indeed, assume that $\emptyset\in\mathscr{A}$. Denote $\underline{\mathscr{A}}=\left\{ A\in2^{\Lambda}\setminus\left\{ \emptyset\right\} :\,\exists B\in\mathscr{A},\,A\subseteq B\right\} $.
Then

\[
\left\{ T^{\prime}:\,T\in\mathscr{T}_{\Lambda}^{\infty}\,\text{is an }\mathscr{A}\text{-tree}\right\} =\left\{ T\in\mathscr{T}_{\Lambda}^{\infty}:\,T\text{ is an }\underline{\mathscr{A}}\text{-tree}\right\} .
\]
This implies that 
\[
\left\{ \dimh\left(\Gamma_{\Phi}\left(T\right)\right):\,T\text{ is an infinite }\mathscr{A}\text{-tree}\right\} =\left\{ \dimh\left(\Gamma_{\Phi}\left(T\right)\right):\,T\text{ is an infinite }\underline{\mathscr{A}}\text{-tree}\right\} .
\]
Now, $\emptyset\notin\underline{\mathscr{A}}$, however, unless $\mathscr{A}=\left\{ \emptyset\right\} $
(which we assume is not the case), $\underline{\mathscr{A}}$ contains
singletons, which implies that $m_{\underline{\mathscr{A}}}=m_{\mathscr{A}}=0.$
Of course, $M_{\underline{\mathscr{A}}}=M_{\mathscr{A}}$ as well,
and so assuming the claim holds for $\underline{\mathscr{A}}$ implies
that it holds for $\mathscr{A}$ as well. Therefore, in view of the
above, we assume for the rest of this subsection that $\emptyset\notin\mathscr{A}$.

In order to prove the theorem we first need to establish several lemmas.
\begin{lem}
\label{lem:dimension of self-similar set on section}For every $\mathscr{A}\text{-tree}$
$T$, and every section $\Pi\subset\Lambda^{*}$, 
\[
m_{\mathscr{A}}\leq\dimh\left(K_{T\cap\Pi}\right)\leq M_{\mathscr{A}}
\]
\end{lem}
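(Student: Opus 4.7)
The plan is to induct on the section depth $n(\Pi) := \max\{|v| : v \in \Pi\}$, reducing the dimensional inequality to the monotonicity of Moran sums. First, I would observe that since $\Phi$ satisfies the OSC with some open set $U$, for any section $\Pi \subset \Lambda^{*}$ the IFS $\{\varphi_{v}\}_{v \in \Pi}$ inherits the OSC from $\Phi$ (the same $U$ works), and consequently so does every sub-IFS $\{\varphi_{v}\}_{v \in T \cap \Pi}$. By the classical Moran formula for self-similar sets satisfying the OSC, the dimension $s := \dimh(K_{T \cap \Pi})$ is the unique $t \geq 0$ with $\sum_{v \in T \cap \Pi} r_{v}^{t} = 1$, and similarly each $s_{A} := \dimh(K_{A})$ for $A \in \mathscr{A}$ is the unique nonnegative solution of $\sum_{i \in A} r_{i}^{s_{A}} = 1$. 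In particular, since $W_{T}(\emptyset) \in \mathscr{A}$, the initial dimension $s_{W_{T}(\emptyset)}$ already lies in $[m_{\mathscr{A}}, M_{\mathscr{A}}]$.

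The base case $n(\Pi) = 1$ forces $\Pi = \Lambda$, so that $T \cap \Pi = T_{1} = W_{T}(\emptyset) \in \mathscr{A}$ and the conclusion is immediate. For the inductive step, I would partition $T_{1}$ into $I := T_{1} \cap \Pi$ and $J := T_{1} \setminus \Pi$, and for each $i \in J$ set $\Pi^{(i)} := \{w \in \Lambda^{*} : iw \in \Pi\}$. Prefix-freeness of the section $\Pi$ ensures that $\Pi^{(i)}$ is itself a section of $\Lambda^{*}$ of depth at most $n(\Pi) - 1$, and yields the disjoint decomposition
\[
T \cap \Pi \;=\; I \;\sqcup\; \bigsqcup_{i \in J} \{iw : w \in T^{i} \cap \Pi^{(i)}\},
\]
whence, for any $t \geq 0$,
\[
\sum_{v \in T \cap \Pi} r_{v}^{t} \;=\; \sum_{i \in I} r_{i}^{t} \;+\; \sum_{i \in J} r_{i}^{t}\,\sum_{w \in T^{i} \cap \Pi^{(i)}} r_{w}^{t}.
\]
Since $W_{T^{i}}(w) = W_{T}(iw) \in \mathscr{A}$ for every $w \in T^{i}$, each $T^{i}$ is again an $\mathscr{A}$-tree, so the induction hypothesis gives $s_{i} := \dimh(K_{T^{i} \cap \Pi^{(i)}}) \in [m_{\mathscr{A}}, M_{\mathscr{A}}]$.

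To bound $s$ from above, I would substitute $t = M_{\mathscr{A}}$ into the displayed identity. For each $i \in J$ one has $\sum_{w} r_{w}^{M_{\mathscr{A}}} \leq \sum_{w} r_{w}^{s_{i}} = 1$ because $s_{i} \leq M_{\mathscr{A}}$, so the full expression is at most $\sum_{i \in T_{1}} r_{i}^{M_{\mathscr{A}}} \leq \sum_{i \in T_{1}} r_{i}^{s_{W_{T}(\emptyset)}} = 1$, the last inequality using $s_{W_{T}(\emptyset)} \leq M_{\mathscr{A}}$. Strict monotonicity of the Moran sum in $t$ then forces $s \leq M_{\mathscr{A}}$, and the lower bound $s \geq m_{\mathscr{A}}$ follows by the entirely symmetric estimate at $t = m_{\mathscr{A}}$. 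The main subtlety is not in any single step but in the bookkeeping around the decomposition: verifying that $\Pi^{(i)}$ really is a section of strictly smaller depth (which depends on prefix-freeness of $\Pi$ together with $\emptyset \notin \Pi$, i.e.\ $n(\Pi) \geq 1$), and that $T^{i}$ remains an $\mathscr{A}$-tree. Once these points are in place, the whole argument collapses into two parallel applications of the same monotonicity principle, one at the top level $T_{1}$ and one inside each subtree $T^{i}$.
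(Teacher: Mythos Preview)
Your proof is correct. Both you and the paper induct on the maximal depth $h(\Pi)=\max_{v\in\Pi}|v|$ and reduce the claim to the monotonicity of the Moran sum $t\mapsto\sum_v r_v^t$, but you decompose differently in the inductive step. The paper peels off the \emph{deepest} level: it replaces $\Pi$ by the section $\Pi^{-}=(\Pi\setminus\Lambda^n)\cup\{w\in\Lambda^{n-1}:\exists v\in\Pi,\,w<v\}$ of height $n-1$, then writes
\[
\sum_{v\in T\cap\Pi}r_v^{\alpha}=\sum_{v\in T\cap\Pi^{-}\cap\Pi}r_v^{\alpha}+\sum_{w\in T\cap\Pi^{-}\setminus\Pi}r_w^{\alpha}\sum_{j\in W_T(w)}r_j^{\alpha},
\]
and uses only that each $W_T(w)\in\mathscr{A}$ to bound the inner sums by $1$ at $\alpha=M_{\mathscr{A}}$ (resp.\ from below at $\alpha=m_{\mathscr{A}}$). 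You instead peel off the \emph{first} level, passing to the subtrees $T^i$ and sub-sections $\Pi^{(i)}$; this exploits the additional observation that each $T^i$ is again an $\mathscr{A}$-tree, so the full induction hypothesis (not just the one-generation bound) applies inside each branch. Both decompositions work equally well; yours leans on the recursive tree structure a bit more explicitly, while the paper's keeps the tree fixed and shrinks only the section.
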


\begin{proof}
Given a section $\Pi$, we denote $h\left(\Pi\right)=\max\limits _{i\in\Pi}\left|i\right|$.
The claim is proved by induction on $h\left(\Pi\right)$. 

When $h\left(\Pi\right)=1$, the claim follows immediately from the
definition of $m_{\mathscr{A}},M_{\mathscr{A}}$ as $T\cap\Pi\in\mathscr{A}$. 

Assume the claim holds for sections $\Pi^{\prime}$ with $h\left(\Pi^{\prime}\right)=n-1$,
and that $h\left(\Pi\right)=n$. Let $\Pi^{-}$ be the section defined
as follows:
\[
\Pi^{-}=\left(\Pi\setminus\Lambda^{n}\right)\cup\left\{ w\in\Lambda^{n-1}:\,\exists v\in\Pi,\,w<v\right\} 
\]
It may be easily seen that $\Pi^{-}$ is a section with $h\left(\Pi^{-}\right)=n-1$.
Recall that $\dimh\left(K_{T\cap\Pi^{-}}\right)=\alpha$ where $\alpha$
is the unique number satisfying
\[
\sum_{i\in T\cap\Pi^{-}}r_{i}^{\alpha}=1.
\]
By the induction hypothesis, $\sum\limits _{i\in T\cap\Pi^{-}}r_{i}^{m_{\mathscr{A}}}\geq1$
and $\sum\limits _{i\in T\cap\Pi^{-}}r_{i}^{M_{\mathscr{A}}}\leq1$.

For every $\alpha$, 
\begin{equation}
\sum\limits _{i\in T\cap\Pi}r_{i}^{\alpha}=\sum\limits _{i\in T\cap\Pi^{-}\cap\Pi}r_{i}^{\alpha}+\sum_{w\in T\cap\Pi^{-}\setminus\Pi}r_{w}^{\alpha}\cdot\sum_{v\in W_{T}\left(w\right)}r_{v}^{\alpha}.\label{eq:dimension of self-similar set on section}
\end{equation}
For every $w\in T$, $W_{T}\left(w\right)\in\mathscr{A}$, and therefore,
$\sum\limits _{v\in W_{T}\left(w\right)}r_{v}^{m_{\mathscr{A}}}\geq1$
and $\sum\limits _{v\in W_{T}\left(w\right)}r_{v}^{M_{\mathscr{A}}}\leq1$.
Hence, replacing $\alpha$ by $m_{\mathscr{A}}$ in Equation (\ref{eq:dimension of self-similar set on section}),
using the induction hypothesis we obtain $\sum\limits _{i\in T\cap\Pi}r_{i}^{m_{\mathscr{A}}}\geq1$
and replacing $\alpha$ by $M_{\mathscr{A}}$ we obtain $\sum\limits _{i\in T\cap\Pi}r_{i}^{M_{\mathscr{A}}}\leq1$
which concludes the proof.
\end{proof}
\begin{lem}
\label{lem:size of section}For any $\rho\in\left(0,r_{\min}\right)$,
the following holds for every $\mathscr{A}\text{-tree}$ $T$:
\[
\rho^{-m_{\mathscr{A}}}\leq\left|T_{\Pi_{\rho}}\right|\leq\rho^{-M_{\mathscr{A}}}\cdot r_{\min}^{-M_{\mathscr{A}}}.
\]
\end{lem}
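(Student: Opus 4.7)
The plan is to leverage the sum estimates that were actually established inside the proof of Lemma \ref{lem:dimension of self-similar set on section}, which are a bit stronger than the dimension bound stated there. Namely, the induction in that proof showed that for every $\mathscr{A}$-tree $T$ and every section $\Pi$,
\[
\sum_{i \in T \cap \Pi} r_i^{m_{\mathscr{A}}} \;\geq\; 1 \qquad\text{and}\qquad \sum_{i \in T \cap \Pi} r_i^{M_{\mathscr{A}}} \;\leq\; 1.
\]
So the first step is simply to invoke these two inequalities with $\Pi = \Pi_\rho$.

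Next, I would plug in the size bounds on $r_i$ for $i \in \Pi_\rho$ given in Equation (\ref{eq:bounds on contraction in sections}), which read $\rho \cdot r_{\min} < r_i \leq \rho$. For the lower bound on $|T_{\Pi_\rho}|$, I use $r_i \leq \rho$ to estimate
\[
1 \;\leq\; \sum_{i \in T_{\Pi_\rho}} r_i^{m_{\mathscr{A}}} \;\leq\; |T_{\Pi_\rho}| \cdot \rho^{m_{\mathscr{A}}},
\]
yielding $|T_{\Pi_\rho}| \geq \rho^{-m_{\mathscr{A}}}$. For the upper bound, I use $r_i > \rho \cdot r_{\min}$ to estimate
\[
1 \;\geq\; \sum_{i \in T_{\Pi_\rho}} r_i^{M_{\mathscr{A}}} \;\geq\; |T_{\Pi_\rho}| \cdot (\rho \cdot r_{\min})^{M_{\mathscr{A}}},
\]
yielding $|T_{\Pi_\rho}| \leq \rho^{-M_{\mathscr{A}}} \cdot r_{\min}^{-M_{\mathscr{A}}}$.

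There is essentially no obstacle here; the only subtle point is that the proof of Lemma \ref{lem:dimension of self-similar set on section} really does establish the two sum inequalities and not merely the dimension inequality (the similarity dimension equation defining $\alpha$ combined with monotonicity of $s \mapsto \sum r_i^s$ translates the dimension comparison into the sum comparison). If one wanted a self-contained statement, I would split off those sum inequalities as a small companion lemma proved by the same induction on $h(\Pi)$ as in Lemma \ref{lem:dimension of self-similar set on section}, and then deduce the current lemma in the two short lines above.
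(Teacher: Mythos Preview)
Your proposal is correct and essentially the same as the paper's proof. The paper phrases it slightly differently---it sets $\alpha=\dimh(K_{T\cap\Pi_\rho})$ so that $\sum_{i\in T\cap\Pi_\rho}r_i^\alpha=1$, derives $\rho^{-\alpha}\leq|T_{\Pi_\rho}|<\rho^{-\alpha}r_{\min}^{-\alpha}$ from Equation~(\ref{eq:bounds on contraction in sections}), and then invokes the dimension bound $m_{\mathscr{A}}\leq\alpha\leq M_{\mathscr{A}}$ from Lemma~\ref{lem:dimension of self-similar set on section}---but as you yourself note, the dimension comparison and the sum inequalities are equivalent via monotonicity of $s\mapsto\sum r_i^s$, so the two arguments are the same in substance.
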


\begin{proof}
Denote $\alpha=\dimh\left(K_{T\cap\Pi_{\rho}}\right)$, which means
that $\sum\limits _{i\in T\cap\Pi_{\rho}}r_{i}^{\alpha}=1$. Hence,
it follows from Equation (\ref{eq:bounds on contraction in sections})
that 
\[
\left|T_{\Pi_{\rho}}\right|\cdot\rho^{\alpha}r_{\min}^{\alpha}<1\leq\left|T_{\Pi_{\rho}}\right|\cdot\rho^{\alpha},
\]
or put differently, that 
\[
\rho^{-\alpha}\leq\left|T_{\Pi_{\rho}}\right|<\rho^{-\alpha}r_{\min}^{-\alpha}.
\]

By Lemma \ref{lem:dimension of self-similar set on section}, $m_{\mathscr{A}}\leq\alpha\leq M_{\mathscr{A}}$,
which implies the required inequalities.
\end{proof}
The following lemma is taken from \cite[Proposition 4.1]{dayan_2021}.
\begin{lem}
\label{lem:next gen in compressed tree}Given any $\rho\in\left(0,r_{\min}\right)$,
and any $n\in\mathbb{N}$, for every $i\in\Pi_{\rho^{n}}$ we denote
$a\left(i\right)=\dfrac{r_{i}}{\rho^{n}}$. Then for every $j\in\Lambda^{*}$,
\[
ij\in\Pi_{\rho^{n+1}}\iff j\in\Pi_{\frac{\rho}{a\left(i\right)}}.
\]
\end{lem}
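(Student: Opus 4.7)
The plan is to unfold the membership conditions on both sides of the equivalence directly from the definition of the sections $\Pi_{\cdot}$, and observe that the two pairs of inequalities differ by a multiplicative factor of $r_{i}$.

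First I would dispose of the degenerate cases. Since $\rho<r_{\min}\le 1$ and $r_{i}\le\rho^{n}$, the word $i$ is nonempty, so the second defining inequality for $i\in\Pi_{\rho^{n}}$ gives $r_{i_{1}}\cdots r_{i_{\left|i\right|-1}}>\rho^{n}$, and therefore $r_{i}>\rho^{n}\cdot r_{\min}>\rho^{n+1}$. Consequently $j=\emptyset$ would falsify both sides of the claimed equivalence: on the left, $ij=i$ fails the first defining inequality for $\Pi_{\rho^{n+1}}$; on the right, $r_{\emptyset}=1>\rho/a(i)$, because $a(i)=r_{i}/\rho^{n}>r_{\min}>\rho$. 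Hence I may assume $j\neq\emptyset$.

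With $j\neq\emptyset$, one has $(ij)_{1}\cdots(ij)_{|ij|-1}=i\cdot(j_{1}\cdots j_{|j|-1})$, whose associated contraction ratio equals $r_{i}\cdot r_{j_{1}}\cdots r_{j_{|j|-1}}$. The condition $ij\in\Pi_{\rho^{n+1}}$ therefore reads
\[
r_{i}\,r_{j}\;\le\;\rho^{n+1}\;<\;r_{i}\cdot r_{j_{1}}\cdots r_{j_{|j|-1}}.
\]
Dividing through by the positive quantity $r_{i}=a(i)\rho^{n}$ produces
\[
r_{j}\;\le\;\rho/a(i)\;<\;r_{j_{1}}\cdots r_{j_{|j|-1}},
\]
which is precisely the condition $j\in\Pi_{\rho/a(i)}$. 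Every step is reversible, so the equivalence holds.

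The argument is essentially a single rescaling; the only delicate point is the bookkeeping of the empty-word edge cases, both of which are excluded thanks to the standing hypothesis $\rho<r_{\min}$.
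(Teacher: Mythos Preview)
Your proof is correct: it is precisely the direct verification one expects, unwinding the defining inequalities for $\Pi_{\rho^{n+1}}$ and dividing through by $r_i$. The paper does not give its own proof of this lemma but simply cites \cite[Proposition 4.1]{dayan_2021}; your argument is the standard one and matches what that reference contains.
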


\begin{lem}
\label{lem:number of children in compressed tree}As before, let $T$
be an $\mathscr{\ensuremath{A}}\text{-tree}$. Given any $\varepsilon>0$,
whenever $\rho\in\left(0,r_{\min}\right)$ is small enough, for every
$n\in\mathbb{N}$, and for every $i\in T\cap\Pi_{\rho^{n}}$, 
\[
\rho^{-\left(m_{\mathscr{A}}-\varepsilon\right)}\leq\left|\left\{ w\in T\cap\Pi_{\rho^{n+1}}:\,i<w\right\} \right|.
\]
\end{lem}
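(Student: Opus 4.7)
The plan is to reduce the count to a single application of Lemma~\ref{lem:size of section} on the descendant tree $T^i$. First observe that since $T$ is an $\mathscr{A}$-tree, so is every descendant tree $T^i$: for any $w\in T^i$ one has $W_{T^i}(w)=W_{T}(iw)\in\mathscr{A}$. Next, by Lemma~\ref{lem:next gen in compressed tree}, the map $j\mapsto ij$ gives a bijection between $T^i\cap \Pi_{\rho/a(i)}$ and $\{w\in T\cap\Pi_{\rho^{n+1}}:\,i<w\}$, so it suffices to bound the size of the former set from below.

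From Equation~(\ref{eq:bounds on contraction in sections}) applied to $i\in\Pi_{\rho^n}$ we have $r_{\min}<a(i)\leq 1$, and consequently $\rho\leq \rho/a(i)<\rho/r_{\min}$. Therefore, imposing $\rho<r_{\min}^{2}$ guarantees $\rho/a(i)<r_{\min}$, so Lemma~\ref{lem:size of section} applies to the $\mathscr{A}$-tree $T^i$ at scale $\rho/a(i)$ and yields
\[
\bigl|T^i\cap \Pi_{\rho/a(i)}\bigr|\;\geq\;(\rho/a(i))^{-m_{\mathscr{A}}}\;=\;\rho^{-m_{\mathscr{A}}}\cdot a(i)^{m_{\mathscr{A}}}\;\geq\;\rho^{-m_{\mathscr{A}}}\cdot r_{\min}^{m_{\mathscr{A}}},
\]
where the last inequality uses $m_{\mathscr{A}}\geq 0$ and $a(i)>r_{\min}$. (If $m_{\mathscr{A}}=0$ the target bound is simply $\rho^{\varepsilon}\leq|\cdot|$, which is immediate since $T^i$ is infinite and hence the set is non-empty.)

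It remains to absorb the factor $r_{\min}^{m_{\mathscr{A}}}$ into $\rho^{-\varepsilon}$: once $\rho$ is small enough that $\rho^{\varepsilon}\leq r_{\min}^{m_{\mathscr{A}}}$, equivalently $\rho\leq r_{\min}^{m_{\mathscr{A}}/\varepsilon}$, we obtain the desired inequality $\rho^{-(m_{\mathscr{A}}-\varepsilon)}\leq|\{w\in T\cap\Pi_{\rho^{n+1}}:\,i<w\}|$. Taking any $\rho<\min\bigl(r_{\min}^{2},\,r_{\min}^{m_{\mathscr{A}}/\varepsilon}\bigr)$ therefore works uniformly in $n$ and $i$. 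There is no real obstacle here; the only subtlety is tracking the scale change from $\rho^{n+1}$ relative to the root down to $\rho/a(i)$ relative to the node $i$, which is precisely supplied by Lemma~\ref{lem:next gen in compressed tree}.
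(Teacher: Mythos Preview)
Your proof is correct and follows essentially the same approach as the paper: reduce via Lemma~\ref{lem:next gen in compressed tree} to counting $T^i\cap\Pi_{\rho/a(i)}$, apply Lemma~\ref{lem:size of section} to the $\mathscr{A}$-tree $T^i$, and absorb the constant $r_{\min}^{m_{\mathscr{A}}}$ by taking $\rho^{\varepsilon}\leq r_{\min}^{m_{\mathscr{A}}}$. You are in fact slightly more careful than the paper in explicitly requiring $\rho<r_{\min}^{2}$ so that $\rho/a(i)<r_{\min}$ and Lemma~\ref{lem:size of section} applies at that scale.
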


\begin{proof}
By Lemma \ref{lem:next gen in compressed tree}, for any $\rho\in\left(0,r_{\min}\right)$,
$n\in\mathbb{N}$, and $i\in T\cap\Pi_{\rho^{n}}$, 
\[
\left\{ w\in T\cap\Pi_{\rho^{n+1}}:\,i<w\right\} =\left\{ ij:\,j\in T^{i}\cap\Pi_{\frac{\rho}{a\left(i\right)}}\right\} .
\]
Since $T^{i}$ is also an $\mathscr{A}\text{-tree}$, by Lemma \ref{lem:size of section},
\[
\rho^{-m_{\mathscr{A}}}\cdot a\left(i\right)^{m_{\mathscr{A}}}\leq\left|T^{i}\cap\Pi_{\frac{\rho}{a\left(i\right)}}\right|.
\]
Note that $r_{\min}<a\left(i\right)\leq1$, hence, taking $\rho$
small enough so that $\rho^{\varepsilon}\leq r_{\min}^{m_{\mathscr{A}}}$,
we obtain
\[
\rho^{-m_{\mathscr{A}}+\varepsilon}\leq\left|T^{i}\cap\Pi_{\frac{\rho}{a\left(i\right)}}\right|
\]
for every $n\in\mathbb{N}$, and $i\in T\cap\Pi_{\rho^{n}}$, as claimed.
\end{proof}
We are now ready to prove Theorem \ref{thm:dimension of A-trees is an interval}.
\begin{proof}[Proof of Theorem \ref{thm:dimension of A-trees is an interval}]
(The inclusion $\subseteq$): For any $\varepsilon\in\left(0,m_{\mathscr{A}}\right)$,
choose some $\rho\in\left(0,r_{\min}\right)$ small enough for Lemma
\ref{lem:next gen in compressed tree} to hold for the given $\varepsilon$,
and such that $\rho^{-m_{\mathscr{A}}+\varepsilon}$ is an integer.
Conceptually, we think of the set $\left\{ \emptyset\right\} \cup\bigcup\limits _{n=1}^{\infty}T\cap\Pi_{\rho^{n}}$
as a tree whose $n^{\text{th}}$ level is $T\cap\Pi_{\rho^{n}}$,
and for every $i\in T\cap\Pi_{\rho^{n}}$, we think of the set $\left\{ v\in T\cap\Pi_{\rho^{n+1}}:\,i<v\right\} $
as the set of children of $i$. According to Lemma \ref{lem:number of children in compressed tree},
every such $i\in T\cap\Pi_{\rho^{n}}$ has at least $\rho^{-m_{\mathscr{A}}+\varepsilon}$
descendants in the section $T\cap\Pi_{\rho^{n+1}}$. By choosing for
each $i\in T\cap\Pi_{\rho^{n}}$ a set of exactly $\rho^{-m_{\mathscr{A}}+\varepsilon}$
descendants, we obtain a $\rho^{-m_{\mathscr{A}}+\varepsilon}\text{-ary}$
subtree, which projects using $\Gamma_{\Phi}$ to an Ahlfors-regular
subset of $\Gamma_{\Phi}\left(T\right)$ of Hausdorff dimension $m_{\mathscr{A}}-\varepsilon$
(see \cite[Lemma 4.21]{dayan_2021}). This implies, in particular,
that $\dimh\left(\Gamma_{\Phi}\left(T\right)\right)\geq m_{\mathscr{A}}-\varepsilon$.
Since $\varepsilon$ is arbitrary, we obtain the required lower bound.

On the other hand, By Lemma \ref{lem:size of section}, for every
$n\in\mathbb{N}$, 
\[
\left|T_{\Pi_{\rho^{n}}}\right|\leq\left(\rho^{n}\right)^{-M_{\mathscr{A}}}\cdot r_{\min}^{-M_{\mathscr{A}}},
\]
which is enough in order to determine that $\dimh\left(\Gamma_{\Phi}\left(T\right)\right)\leq M_{\mathscr{A}}$.

(The inclusion $\supseteq$): We prove this part using a probabilistic
argument. Recall that according to Theorem \ref{thm:dimension of GW fractals},
for a Galton-Watson fractal with an offspring distribution $\tilde{W}$,
the Hausdorff dimension is a.s. (conditioned on non-extinction) the
unique $s\geq0$ s.t. the following equality holds 
\begin{equation}
\mathbb{E}\left(\sum_{i\in\tilde{W}}r_{i}^{s}\right)=1.\label{eq:Hausdorff dimension of GWF inside proof}
\end{equation}
Denote by $A_{\min},A_{\max}\in\mathscr{A}$ any two sets that satisfy
\[
\dimh\left(K_{A_{\min}}\right)=m_{\mathscr{A}}\text{ and }\dimh\left(K_{A_{\max}}\right)=M_{\mathscr{A}}.
\]
Given any $t\in\left[0,1\right]$, consider a GWF which is constructed
using the offspring distribution $W_{t}=t\delta_{A_{\min}}+\left(1-t\right)\delta_{A_{\max}}$
and denote its a.s. Hausdorff dimension by $s_{t}$, i.e., $s_{t}$
is the solution of equation (\ref{eq:Hausdorff dimension of GWF inside proof})
with $W_{t}$ instead of $\tilde{W}$. Note that $\supp\left(W_{t}\right)\subseteq\supp\left(W\right)$.
Now, given any $a\in\left[m_{\mathscr{A}},M_{\mathscr{A}}\right]$,
since the mapping $t\mapsto s_{t}$ is continuous%
, there is some (unique) $t_{a}$ s.t. $s_{t_{a}}=a$. This means
that a GWF constructed with the offspring distribution $W_{t_{a}}$,
almost surely has its Hausdorff dimension equal to $a$, in particular
there exists an $\mathscr{A}\text{-tree}$ $T\in\mathscr{T}_{\Lambda}$,
such that $\Gamma_{\Phi}\left(T\right)=a$.
\end{proof}
\begin{rem}
The OSC was used quite extensively in the proof of Theorem \ref{thm:dimension of A-trees is an interval}.
However it might not be necessary for the conclusion of the Theorem
to hold and it would be interesting to try proving the theorem without
it.
\end{rem}

\subsection{Microsets and dimensions of Galton-Watson fractals\label{subsec:Microsets-and-dimensions-of GWF}}

We now apply Theorem \ref{thm:microset is a union of minisets} to
Galton-Watson fractals. Using Theorem \ref{thm:supp of GWF =00003D limit branch sets a.s.}
we obtain:
\begin{thm}
\label{thm:microsets as minisets for GWF}Let $E$ be a Galton-Watson
fractal w.r.t. a similarity IFS $\Phi$ which satisfies the WSC. Then
there exists some $n\geq1$, which depends only on $\Phi$, s.t. almost
surely for every microset $M$ of $E$, there are $A_{1},...,A_{n}\in\supp\left(E\right)$,
and expanding similarity maps $\psi_{1},...,\psi_{n}$, such that
\[
Q^{\circ}\cap\left(\bigcup_{j=1}^{n}\psi_{i}A_{i}\right)\subseteq M\subseteq Q\cap\left(\bigcup_{j=1}^{n}\psi_{i}A_{i}\right).
\]
Moreover, if $\Phi$ satisfies the SSC, then $n=1$.
\end{thm}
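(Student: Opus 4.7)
The plan is to derive this statement as a direct application of the deterministic Theorem \ref{thm:microset is a union of minisets}, using Theorem \ref{thm:supp of GWF =00003D limit branch sets a.s.} to identify the family $\overline{\mathcal{S}_{\Phi}^{T^{\prime}}}$ (which supplies the sets $A_j$ in the deterministic statement) with $\supp(E)$ almost surely. The only thing that needs care is the order of the quantifiers: the almost sure event is used first to pin down a single realization of the coding tree, and only then does the microset $M$ enter the picture, so that the deterministic theorem can be invoked uniformly over all microsets of that realization.

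Concretely, let $T$ denote the Galton-Watson tree underlying $E$, so that conditioned on non-extinction $E = \Gamma_{\Phi}(T)$. Since $\partial T = \partial T^{\prime}$ and $T^{\prime} \in \mathscr{T}_{\Lambda}^{\prime}$, we also have $E = \Gamma_{\Phi}(T^{\prime})$, which places us in the exact hypothesis of Theorem \ref{thm:microset is a union of minisets} with $F = E$ and coding tree $T^{\prime}$. Let $n \geq 1$ be the integer depending only on $\Phi$ produced by that theorem (with $n = 1$ in the SSC case). For any fixed realization of $T^{\prime}$, the theorem gives, for every microset $M$ of $E$, sets $A_1, \ldots, A_n \in \overline{\mathcal{S}_{\Phi}^{T^{\prime}}}$ and expanding similarities $\psi_1, \ldots, \psi_n$ satisfying
\[
Q^{\circ} \cap \Bigl(\bigcup_{j=1}^{n} \psi_j A_j\Bigr) \subseteq M \subseteq Q \cap \Bigl(\bigcup_{j=1}^{n} \psi_j A_j\Bigr).
\]

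To promote these $A_j$ from elements of $\overline{\mathcal{S}_{\Phi}^{T^{\prime}}}$ to elements of $\supp(E)$, I would invoke Theorem \ref{thm:supp of GWF =00003D limit branch sets a.s.}, which asserts that almost surely (conditioned on non-extinction) $\overline{\mathcal{S}_{\Phi}^{T^{\prime}}} = \supp(E)$. Restricting to this almost sure event, the sets $A_j$ supplied above automatically lie in $\supp(E)$, and the conclusion of the theorem holds simultaneously for every microset $M$ of $E$. The SSC clause follows identically from the $n = 1$ clause of Theorem \ref{thm:microset is a union of minisets}, since the SSC implies the WSC.

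There is essentially no obstacle here, since the two main ingredients do all the work: the geometric argument (the WSC bound on the number of cylinders meeting a small ball) is absorbed in Theorem \ref{thm:microset is a union of minisets}, and the probabilistic density argument (via Proposition \ref{prop:0-1 law} and the resulting Proposition \ref{prop:descendant trees are a.s. dense}) is absorbed in Theorem \ref{thm:supp of GWF =00003D limit branch sets a.s.}. The mild point to keep in mind is that the statement ``almost surely, for every microset $M$'' is legitimate precisely because the almost sure event $\{\overline{\mathcal{S}_{\Phi}^{T^{\prime}}} = \supp(E)\}$ does not depend on $M$, so one may first discard the null set and then quantify over microsets of the fixed realization.
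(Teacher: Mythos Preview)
Your proposal is correct and follows exactly the approach the paper takes: the paper states this theorem as an immediate consequence of Theorem~\ref{thm:microset is a union of minisets} combined with Theorem~\ref{thm:supp of GWF =00003D limit branch sets a.s.}, and you have reproduced precisely that argument (with, if anything, more care about the order of quantifiers than the paper itself provides).
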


On the other hand, we apply Theorem \ref{thm:minisets are microsets}
and obtain the following:
\begin{thm}
\label{thm:minisets as microsets for GWF}Let $E$ be a Galton-Watson
fractal w.r.t. a similarity IFS $\Phi$. Then almost surely, every
miniset $M$ of a member of $\supp\left(E\right)$ is contained in
a microset of $E$. 

Moreover, if $M$ has the form $M=Q\cap\psi\left(A\right)$ for some
$A\in\supp\left(E\right)$ and an expanding similarity map $\psi$,
and assuming that $\Phi$ satisfies the OSC with an OSC set $U$ such
that $\psi^{-1}\left(Q\right)\subseteq U$, then $M$ is a microset
of $E$.
\end{thm}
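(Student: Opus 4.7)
The plan is to obtain this theorem as an immediate consequence of the deterministic Theorem \ref{thm:minisets are microsets} applied to a specific coding tree, namely the reduced Galton-Watson tree, together with the identification of $\supp(E)$ provided by Theorem \ref{thm:supp of GWF =00003D limit branch sets a.s.}. Since $E$ is defined conditional on non-extinction, I may work on the event that the underlying Galton-Watson tree $T$ is infinite, and then pass to its reduced tree $T^{\prime}\in\mathscr{T}_{\Lambda}^{\prime}$. Because $\partial T=\partial T^{\prime}$, we have $E=\Gamma_{\Phi}(T)=\Gamma_{\Phi}(T^{\prime})$, so $T^{\prime}$ is a coding tree for $E$ without leaves, precisely of the kind required in the hypothesis of Theorem \ref{thm:minisets are microsets}.

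Next, I would invoke Theorem \ref{thm:supp of GWF =00003D limit branch sets a.s.} to conclude that, almost surely conditioned on non-extinction,
\[
\supp(E)=\overline{\mathcal{S}_{\Phi}^{T^{\prime}}}.
\]
Fix any realization in this almost sure event. Let $M$ be a miniset of some $A\in\supp(E)=\overline{\mathcal{S}_{\Phi}^{T^{\prime}}}$. Then by the first half of Theorem \ref{thm:minisets are microsets}, applied with $F=E$ and the coding tree $T^{\prime}$, we obtain that $M$ is contained in a microset of $E$. This proves the first assertion.

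For the moreover part, suppose $M=Q\cap\psi(A)$ with $A\in\supp(E)$ and $\psi$ an expanding similarity, and assume $\Phi$ satisfies the OSC with an OSC set $U$ such that $\psi^{-1}(Q)\subseteq U$. Since $A\in\overline{\mathcal{S}_{\Phi}^{T^{\prime}}}$ almost surely, the second half of Theorem \ref{thm:minisets are microsets} applies verbatim and yields that $M$ is itself a microset of $E$, completing the proof.

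There is no real obstacle to overcome: the only thing to be careful about is measurability of the event on which the identification $\supp(E)=\overline{\mathcal{S}_{\Phi}^{T^{\prime}}}$ holds, together with the fact that this single almost sure event is enough for the conclusion, since the quantifier ``for every miniset $M$ of a member of $\supp(E)$'' is a statement about the deterministic set $\supp(E)$ once it has been identified with $\overline{\mathcal{S}_{\Phi}^{T^{\prime}}}$ for the given realization. Thus the entire argument reduces to chaining two already-established results.
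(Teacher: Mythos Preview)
Your proposal is correct and matches the paper's approach exactly: the paper states this theorem as an immediate application of Theorem~\ref{thm:minisets are microsets} combined with Theorem~\ref{thm:supp of GWF =00003D limit branch sets a.s.}, using the reduced tree $T'$ as the coding tree for $E$. Your write-up is in fact more explicit than the paper's, which simply says ``we apply Theorem~\ref{thm:minisets are microsets} and obtain the following'' without spelling out the details.
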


In fact, when the underlying IFS satisfies the OSC, we can say more.
Here, we make use of the fact that the strong OSC is equivalent to
the OSC for similarity IFSs (Lemma \ref{lem:Separation conditions hierarchy}).

\begin{thm}
\label{thm:dimension of minisets in supp of GWF}Let $E$ be a Galton-Watson
fractal w.r.t. a similarity IFS $\Phi=\left\{ \varphi_{i}\right\} _{i\in\Lambda}$
which satisfies the OSC, and with offspring distribution $W$. Then
a.s. every miniset $M$ of a member of $\supp\left(E\right)$, is
contained in some microset $L$ of $E$, such that $\dimh\left(M\right)=\dimh\left(L\right)$.

Moreover, in the case where $\mathbb{P}\left[W=\emptyset\right]>0$,
a.s. every miniset of a member of $\supp\left(E\right)$ is a microset
of $E$.
\end{thm}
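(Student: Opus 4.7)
I would refine the construction behind Theorem \ref{thm:minisets as microsets for GWF}, which already produces a microset $L \supseteq M$ of $E$, and use the OSC to control the dimension of $L \setminus M$.

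For the first statement, fix $A \in \supp(E)$ and an expanding similarity $\psi$, and set $M = Q \cap \psi(A)$. By Theorem \ref{thm:supp of GWF =00003D limit branch sets a.s.}, a.s.\ I may find $v_n \in T'$ with $A_n := \Gamma_{\Phi}\bigl((T')^{v_n}\bigr) \to A$. Then $\psi_n := \psi \circ \varphi_{v_n}^{-1}$ is expanding for $n$ large, $N_n := Q \cap \psi_n(E)$ is a miniset of $E$, and since $\varphi_{v_n}(A_n) \subseteq E$, one has $Q \cap \psi(A_n) \subseteq N_n$. Passing to a subsequence, $N_n \to L$ for some microset $L$ of $E$, and $M \subseteq L$. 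The essential step is then $\dimh L \leq \dimh M$. By Lemma \ref{lem:Separation conditions hierarchy} there is an OSC set $U$ with $U \cap K \neq \emptyset$; under OSC, the excess of $N_n$ over $\psi_n\bigl(\varphi_{v_n}(A_n)\bigr) = \psi(A_n)$ comes only from sibling cylinders $\varphi_w(K)$, $|w|=|v_n|$, whose intersection with $\varphi_{v_n}(K)$ lies on $\varphi_{v_n}(\partial U)$. To bound the dimension of these pieces I would select a sub-cube $C \subseteq \psi^{-1}(Q)$, contained in $U$, with $\dimh(A \cap C) = \dimh M$ (possible by covering $\psi^{-1}(Q) \cap A$ by finitely many sub-cubes lying in $U$ and choosing one that realizes the maximum), replace $\psi$ by its composition with the dilation sending $Q$ onto $C$, and apply Proposition \ref{prop:miniset of S contained in U as microsets} to express the resulting rescaled miniset as a finite union of images of microsets of $E$, each of dimension at most $\dimh M$; the bound then transfers to $L$.

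For the \emph{moreover} assertion, suppose $\mathbb{P}[W = \emptyset] > 0$, so the non-extinction probability $p$ satisfies $p < 1$. Given a tree $Y \in \supp(\mathscr{G}')$ and $m \in \mathbb{N}$, consider the event $\mathscr{F}_{Y,m}$ on a generic GWT $\tilde{T}$ that its reduced tree $\tilde{T}'$ restricted to the first $m$ levels is a single path $\emptyset, w_1, w_1 w_2, \ldots, w_1 \cdots w_m$ and that $(\tilde{T}')^{w_1 \cdots w_m}$ is $1/m$-close to $Y$. Using the factor $(1-p)^{|\cdot|-1}$ at each step to extinguish sibling subtrees, one checks that $\mathbb{P}(\tilde{T} \in \mathscr{F}_{Y,m}) > 0$. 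Applying Proposition \ref{prop:0-1 law} simultaneously to a countable family of such events, indexed by $m$ and a countable dense subset $\{Y_k\} \subseteq \supp(\mathscr{G}')$, a.s.\ for every $A \in \supp(E)$ and every $\varepsilon > 0$ I could find $v \in T$ such that $T^v \in \mathscr{F}_{Y_k, m}$ with $Y_k$ within $\varepsilon$ of the tree $Y_A$ coding $A$. For such $v$, the part of $E$ near the deep cylinder $\varphi_{v w_1 \cdots w_m}(K)$ is exactly a rescaled copy of a set close to $A$, with no boundary junk; taking $m$ large enough that $\varphi_{v w_1 \cdots w_m}(\psi^{-1}(Q))$ lies inside $\varphi_v(U)$, the miniset $Q \cap (\psi \circ \varphi_{v w_1 \cdots w_m}^{-1})(E)$ equals $Q \cap \psi(A')$ for some $A' \approx A$, and the collection of these minisets converges to $M$, showing $M$ itself is a microset of $E$.

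The main obstacle is the dimension bound $\dimh L \leq \dimh M$ in the first part: under OSC but not SSC, sibling cylinders touching $\varphi_{v_n}(K)$ along $\partial U$ can a priori carry significant Hausdorff dimension after the expansion by $\psi_n$, and only the density-based sub-cube selection together with the decomposition in Proposition \ref{prop:miniset of S contained in U as microsets} yields the required reduction to microsets of dimension at most $\dimh M$. The moreover part is more straightforward once the positivity of $\mathbb{P}(\mathscr{F}_{Y, m})$ is verified from $\mathbb{P}[W = \emptyset] > 0$.
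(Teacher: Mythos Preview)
Your first-part argument has a genuine gap. After producing the microset $L=\lim N_n \supseteq M$, you try to bound $\dim_H L$ by passing to a sub-cube $C\subseteq U$ and replacing $\psi$ by the rescaled map $\psi'$. But this replaces $M$ by a strictly smaller miniset $M'=Q\cap\psi'(A)$, and any microset obtained via $\psi'$ need not contain the original $M$; you have lost exactly the containment $M\subseteq L$ that the theorem asserts. Applying Proposition~\ref{prop:miniset of S contained in U as microsets} only decomposes $M'$ as a union of images of microsets; it says nothing about the original $L$, and there is no mechanism by which ``the bound then transfers to $L$''. There is also no guarantee that such a sub-cube exists: nothing forces $\psi^{-1}(Q)\cap A$ to meet $U$ at all (cf.\ Remark~\ref{rem:Smaller branch set than dim_L}). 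The sibling pieces $\varphi_w\bigl(\Gamma_\Phi((T')^w)\bigr)$ entering $N_n$ can have dimension as large as $M_W$, so the excess of $L$ over $M$ is genuinely uncontrollable from your setup.

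The paper avoids this obstacle by \emph{choosing} the siblings rather than bounding the random ones. Using the strong OSC one fixes a word $j$ with $\varphi_j(\psi^{-1}(Q))\subseteq U$, and then builds a new tree $\tilde Y\in\supp(\mathscr{G}')$ whose $j$-subtree is the given coding tree $Y$ of $S$ and whose remaining level-$|j|$ subtrees are all a fixed $X\in\supp(\mathscr{G}')$ with $\dim_H(\Gamma_\Phi(X))\leq\dim_H(M)$ (such $X$ exists by Lemma~\ref{lem:inf of branch sets =00003D inf of minisets}). Setting $\tilde S=\Gamma_\Phi(\tilde Y)\in\supp(E)$ and $L=Q\cap\psi\circ\varphi_j^{-1}(\tilde S)$, the OSC-cube condition of Theorem~\ref{thm:minisets as microsets for GWF} now holds \emph{by construction}, so $L$ is a microset of $E$; it contains $M$ since $\varphi_j(S)\subseteq\tilde S$, and all other contributions have dimension at most $\dim_H(M)$. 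For the \emph{moreover} clause with $\mathbb{P}[W=\emptyset]>0$, one simply takes $\tilde Y$ to be a single ray to $j$ followed by $Y$; this lies in $\supp(\mathscr{G}')$ and gives $L=M$ directly---much simpler than your density argument via Proposition~\ref{prop:0-1 law}, which is workable in spirit but would still need the ray fixed towards a point of $K\cap U$ rather than left arbitrary.
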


\begin{proof}
Let $T$ be the corresponding GWT. Suppose we are given an $M=Q\cap\psi\left(S\right)$
for some $S\in\supp\left(E\right)$ and an expanding similarity map
$\psi$. Recall that by Theorem \ref{thm:supp of GWF =00003D limit branch sets a.s.},
$\supp\left(E\right)=\Gamma_{\Phi}\left(\supp\left(T^{\prime}\right)\right)$.
Let $Y\in\supp\left(T^{\prime}\right)$ satisfy $\Gamma_{\Phi}\left(Y\right)=S$
. By the strong OSC there exists some $x\in K\cap U$, where $U$
is an OSC set. Assume that $x=\gamma_{\Phi}\left(i\right)$ for $i=i_{1}i_{2}\cdots\in\Lambda^{\mathbb{N}}$.
For $k$ large enough, $\varphi_{i_{1}\cdots i_{k}}\left(\psi^{-1}\left(Q\right)\right)\subseteq U$.
Since we assume that $\forall a\in\Lambda,\,\mathbb{P}\left[a\in W\right]>0$,
there is some finite tree $F$ of height $k$, s.t. $j:=i_{1}\cdots i_{k}\in F$,
and $\mathbb{P}\left[T^{\prime}\in\left[F\right]\right]>0$. Consider
the tree $\tilde{Y}\in\mathscr{T}_{\Lambda}^{\prime}$ constructed
as follows:
\begin{itemize}
\item $\tilde{Y}\in\left[F\right]$,
\item $\tilde{Y}^{j}=Y$,
\item For every $v\in F_{k}\setminus\left\{ j\right\} $, $\tilde{Y}^{v}=X$
where $X\in\supp\left(T^{\prime}\right)$ and $\dimh\left(\Gamma_{\Phi}\left(X\right)\leq\dimh\left(M\right)\right)$.
The existence of such $X$ is guaranteed by Lemma \ref{lem:inf of branch sets =00003D inf of minisets}
combined with Theorem \ref{thm:supp of GWF =00003D limit branch sets a.s.}.
\end{itemize}
It may be easily verified that $\tilde{Y}\in\supp\left(T^{\prime}\right)$
and therefore $\tilde{S}=\Gamma_{\Phi}\left(\tilde{Y}\right)\in\supp\left(E\right)$.

Denote $L:=Q\cap\psi\circ\varphi_{j}^{-1}\left(\tilde{S}\right)$.
Since $\varphi_{j}\circ\psi^{-1}\left(Q\right)\subseteq U$, by Theorem
\ref{thm:minisets as microsets for GWF}, $L$ is a microset of $E$%
. Note that $\tilde{S}=\bigcup\limits _{v\in F_{n}}\varphi_{v}\circ\Gamma_{\Phi}\left(\tilde{Y}^{v}\right)$,
and therefore

\[
L=\bigcup\limits _{v\in F_{n}}Q\cap\psi\circ\varphi_{j}^{-1}\circ\varphi_{v}\circ\Gamma_{\Phi}\left(\tilde{Y}^{v}\right).
\]
In particular, since $j\in F_{n}$, $M\subseteq L$. Together with
the fact that $\dim\left(\Gamma_{\Phi}\left(\tilde{Y}^{v}\right)\right)\leq\dimh\left(M\right)$
for every $v\in F_{n}\setminus\left\{ j\right\} $, this implies that
$\dimh\left(M\right)=\dimh\left(L\right).$

For the ``moreover'' part, in case $\mathbb{P}\left[W=\emptyset\right]>0$,
we could define $\tilde{Y}$ just by 
\[
\tilde{Y}=\left\{ \emptyset\right\} \cup\bigcup\limits _{n=1}^{k}\left\{ i_{1}\cdots i_{n}\right\} \cup jY
\]
 and proceed in the same way. Thus we obtain that $L=M$ is a microset.
\end{proof}

The following lemma may be proved using the same argument used in
the proof of Theorem \ref{thm:dimension of minisets in supp of GWF}.
To avoid redundancy, the proof of the lemma is omitted, and the required
adjustments are left to the reader.
\begin{lem}
\label{lem:sets in supp(E) as microsets of E using strong OSC }Let
$E$ be a Galton-Watson fractal w.r.t. a similarity IFS $\Phi=\left\{ \varphi_{i}\right\} _{i\in\Lambda}$,
which satisfies the OSC. Then for every $A\in\supp\left(E\right)$,
there exists some $\tilde{A}\in\supp\left(E\right)$ and an expanding
similarity $\psi$, such that the following hold:
\begin{enumerate}
\item $\psi^{-1}\left(Q\right)\subseteq U$ for an OSC set $U$;
\item $\dimh\left(A\right)=\dimh\left(\psi\left(\tilde{A}\right)\cap Q\right)$;
\item $\psi\left(\tilde{A}\right)\cap Q^{\circ}\neq\emptyset$.
\end{enumerate}
\end{lem}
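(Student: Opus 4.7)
I intend to adapt the argument from Theorem \ref{thm:dimension of minisets in supp of GWF}, but to choose the expanding similarity $\psi$ from the start so that the distinguished branch of $\tilde{A}$ lands inside $Q^{\circ}$ and carries the full Hausdorff dimension of $A$. The construction is driven by a fixed contracting similarity $\sigma$ with $\sigma(K) \subseteq Q^{\circ}$; such a $\sigma$ exists because $K$ is compact.

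First, let $Y \in \supp(\mathscr{G}^{\prime})$ code $A$, i.e. $\Gamma_{\Phi}(Y) = A$. Invoking the equivalence of OSC and strong OSC for similarity IFSs (Lemma \ref{lem:Separation conditions hierarchy}), pick $x \in K \cap U$ for some OSC set $U$, where $x$ is realised as $x = \gamma_{\Phi}(i)$ for some $i \in \partial S$ with $S \in \supp(\mathscr{G}^{\prime})$. Set $j := i_{1}\cdots i_{k}$ for $k$ large, and define $\psi := \sigma \circ \varphi_{j}^{-1}$. The scaling ratio of $\psi$ equals $r_{\sigma}/r_{j}$, which exceeds $1$ once $k$ is large, so $\psi$ is expanding. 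Moreover $\psi^{-1}(Q) = \varphi_{j}(\sigma^{-1}(Q))$ has diameter $r_{j}\cdot\mathrm{diam}(\sigma^{-1}(Q)) \to 0$, while lying in a neighborhood of $\varphi_{j}(0) \to x \in U$; thus $\psi^{-1}(Q) \subseteq U$ for $k$ large, proving (1).

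Exactly as in the proof of Theorem \ref{thm:dimension of minisets in supp of GWF}, construct $\tilde{Y} \in \supp(\mathscr{G}^{\prime})$ with a backbone from $\emptyset$ to $j$, with $\tilde{Y}^{j} = Y$, and with $\tilde{Y}^{v}$ for each remaining $v \in \tilde{Y}_{k}$ chosen to code a set in $\supp(E)$ of Hausdorff dimension at most $\dimh(A)$; such trees exist by Lemma \ref{lem:inf of branch sets =00003D inf of minisets} combined with Theorem \ref{thm:supp of GWF =00003D limit branch sets a.s.}. Set $\tilde{A} := \Gamma_{\Phi}(\tilde{Y}) \in \supp(E)$. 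Since $\varphi_{j}(A) \subseteq \tilde{A}$, we obtain $\sigma(A) = \psi(\varphi_{j}(A)) \subseteq \psi(\tilde{A})$; and because $\sigma(A) \subseteq \sigma(K) \subseteq Q^{\circ}$, both condition (3) and the inequality $\dimh(\psi(\tilde{A}) \cap Q) \geq \dimh(A)$ follow at once. For the matching upper bound, decompose $\tilde{A} = \bigcup_{v \in \tilde{Y}_{k}} \varphi_{v}\Gamma_{\Phi}(\tilde{Y}^{v})$; then $\psi(\tilde{A}) \cap Q$ is a finite union of similar images of the sets $\Gamma_{\Phi}(\tilde{Y}^{v})$ intersected with $Q$, and by the choice of $\tilde{Y}^{v}$ every piece with $v \neq j$ has Hausdorff dimension at most $\dimh(A)$, while the $v=j$ piece is exactly $\sigma(A)$. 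This yields (2).

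The main obstacle, as in Theorem \ref{thm:dimension of minisets in supp of GWF}, is ensuring that the backbone from $\emptyset$ to $j$ is consistent with $\supp(\mathscr{G}^{\prime})$: this rests on the same implicit positivity-of-coordinates assumption on the offspring distribution $W$ used there. Once that is in force, both the choice of $i \in \partial S$ with $\gamma_{\Phi}(i) \in U$ and the splicing of $Y$ at node $j$ proceed without further change.
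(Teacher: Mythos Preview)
Your argument is correct and follows essentially the same route as the paper's (commented-out) proof: both pick $j$ deep enough that a small cube around $\varphi_j(K)$ sits inside the OSC set $U$, then splice $Y$ at node $j$ into a backbone tree with low-dimensional side branches. The only cosmetic difference is that you parametrise the expanding map as $\psi=\sigma\circ\varphi_j^{-1}$ via an auxiliary contraction $\sigma$ with $\sigma(K)\subseteq Q^{\circ}$, whereas the paper directly picks a cube $C$ with $\varphi_j(A)\subseteq C^{\circ}\subseteq U$ and lets $\psi$ send $C$ to $Q$; your $\varphi_j(\sigma^{-1}(Q))$ is exactly such a $C$. One small wording issue: the set $\psi^{-1}(Q)=\varphi_j(\sigma^{-1}(Q))$ need not contain $\varphi_j(0)$ (since $0$ may lie outside $\sigma^{-1}(Q)$), but it does contain $x=\gamma_\Phi(i)\in\varphi_j(K)$, which is what you actually need for the inclusion $\psi^{-1}(Q)\subseteq U$.
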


Note that by Theorem \ref{thm:minisets as microsets for GWF}, a.s.
$\psi\left(\tilde{A}\right)\cap Q\in\mathcal{W}_{E}$.
\begin{proof}[Proof of Theorem \ref{thm:Hausdorff dimensions of microsets of GWF}]
Let $T$ be the corresponding GWT. As mentioned in Equation (\ref{eq:support of T}),
the support of $T$ is just the set of all $\supp\left(W\right)\text{-trees}$.
By Theorem \ref{thm:dimension of A-trees is an interval}, we have
\[
\left\{ \dimh\left(\Gamma_{\Phi}\left(S\right)\right):\,S\in\supp\left(\mathscr{G}^{\infty}\right)\right\} =\left[m_{W},M_{W}\right].
\]
Hence, by Theorem \ref{thm:supp of GWF =00003D limit branch sets a.s.}
we obtain 
\[
\left\{ \dimh\left(F\right):\,F\in\supp\left(E\right)\right\} =\left[m_{W},M_{W}\right].
\]

Combining Lemma \ref{lem:sets in supp(E) as microsets of E using strong OSC }
and Theorem \ref{thm:minisets as microsets for GWF} yields that a.s.
for every $A\in\supp\left(E\right)$ there is some $F\in\mathcal{W}_{E}$,
such that $\dimh\left(A\right)=\dimh\left(F\right)$. This shows that
$\left\{ \dimh\left(F\right):\,F\in\mathcal{W}_{E}\right\} \supseteq\left[m_{W},M_{W}\right]$. 

Recalling that a.s. conditioned on non-extinction, $\overline{\mathcal{S}_{\Phi}^{T^{\prime}}}=\supp\left(E\right)$
(Theorem \ref{thm:supp of GWF =00003D limit branch sets a.s.}), the
opposite inclusion follows from Corollary \ref{thm:Assuad and lower dimensions as branching sets}
combined with Theorem \ref{thm:Assouad as max}.
\end{proof}

\section{Some remarks on Furstenberg's definition of microsets and homogeneity.\label{sec:Fursternberg}}

We now recall Furstenberg's original definition of a microset. To
distinguish this definition from Definition \ref{def:microsets},
we use the terminology \emph{F-microset} and \emph{F-miniset}.
\begin{defn}
\label{def:F-microsets}Given a compact set $F\subseteq\mathbb{R}^{d}$,
we define the following:
\begin{enumerate}
\item A non-empty subset of a miniset of $F$, whose defining similarity
map is a homothety is called an F-miniset of $F$.
\item An F-microset of $F$ is any limit of F-minisets of $F$ in the Hausdorff
metric.
\end{enumerate}
We denote by $\mathcal{F}_{F}$ the set of all F-microsets of $F$.

The following Lemma establishes the relation between microsets as
defined in Definition \ref{def:microsets} and F-microsets as defined
above.
\end{defn}

\begin{lem}
\label{lem:F-microsets are subsets of microsets}Let $F\subseteq\rd$
and $M\subseteq Q$ be non-empty compact sets. Then $M$ is an F-microset
of $F$ $\iff$ $M$ is a subset of a microset of $F$ which is obtained
using only homotheties.
\end{lem}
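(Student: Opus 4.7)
The plan is to prove both directions of the equivalence using the sequential definitions of microset/F-microset together with the basic Hausdorff-convergence facts from Subsection~2.2 and compactness of $\Omega_Q$.

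For the ``$\Leftarrow$'' direction, suppose $M\subseteq N$ for a microset $N$ of $F$ obtained using only homotheties, so $N=\lim_{i\to\infty}N_i$ with $N_i=Q\cap\varphi_i(F)$ and each $\varphi_i$ an expanding homothety. Set $\delta_i=d_H(N_i,N)$ and define
\[
M_i=\{y\in N_i:\,d(y,M)\leq\delta_i\}.
\]
Each $M_i$ is a compact subset of the miniset $N_i$, so once shown to be non-empty it is an F-miniset of $F$. Non-emptiness follows because for any $x\in M\subseteq N$ there is $y\in N_i$ with $\|x-y\|\leq\delta_i$ by definition of Hausdorff distance, whence $d(y,M)\leq\|y-x\|\leq\delta_i$ and so $y\in M_i$. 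The same computation gives $d(x,M_i)\leq\delta_i$ for every $x\in M$, while $d(y,M)\leq\delta_i$ for every $y\in M_i$ by definition. Hence $d_H(M_i,M)\leq\delta_i\to 0$, and $M$ is an F-microset.

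For the ``$\Rightarrow$'' direction, write $M=\lim_{i\to\infty}M_i$ with each $M_i$ a non-empty compact subset of $N_i:=Q\cap\varphi_i(F)$ for some expanding homothety $\varphi_i$. Since $M$ is non-empty, the $N_i$ are eventually non-empty, and by compactness of $(\Omega_Q,d_H)$ we may pass to a subsequence along which $N_i$ converges to some non-empty compact set $N\subseteq Q$. By construction, $N$ is a microset of $F$ defined by a sequence of homotheties. The containment $M\subseteq N$ is the routine Hausdorff-limit observation that $A_i\subseteq B_i$, $A_i\to A$, $B_i\to B$ imply $A\subseteq B$: every $x\in M$ is a limit of points $x_i\in M_i\subseteq N_i$, and $N_i\to N$ together with the closedness of $N$ force $x\in N$.

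The only delicate point is in the $(\Leftarrow)$ direction, where one must simultaneously ensure that the approximating sets $M_i$ lie inside the prescribed minisets $N_i$, converge to $M$ in the Hausdorff metric, and remain non-empty; the definition above as the intersection of $N_i$ with a $\delta_i$-tubular neighborhood of $M$ arranges all three requirements at once. The $(\Rightarrow)$ direction is essentially a compactness argument, and no new ingredient beyond the standard Hausdorff lemmas is needed.
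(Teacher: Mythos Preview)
Your proof is correct and follows essentially the same approach as the paper's. In the $(\Leftarrow)$ direction you spell out explicitly the thickening construction and the Hausdorff-distance estimate, whereas the paper simply invokes Lemma~\ref{lem: convergence of a sequence and thickenings of a set} to obtain $N_i\cap M^{(\delta_i)}\to N\cap M=M$; in the $(\Rightarrow)$ direction both arguments are identical compactness-plus-monotonicity arguments.
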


\begin{proof}
($\implies$): Assume $M$ is an F-microset of $F$, then it is a
limit of a sequence $M_{i}$ of F-minisets of $F$. Each $M_{i}$
is contained in a miniset $L_{i}$ of $F$, which is obtained using
a homothety. By moving to a subsequence, $L_{i}$ converges in the
Hausdorff metric to some compact set $L$ which is by definition a
microset of $F$. Since $M_{i}\subseteq L_{i}$ for every $i$, $M\subseteq L$.

($\impliedby$): Assume $M\subseteq L$ where $L=\lim\limits _{i\to\infty}Q\cap\psi_{i}\left(F\right)$,
and all the $\psi_{i}$ are homotheties. By Lemma \ref{lem: convergence of a sequence and thickenings of a set},
there is a sequence $M_{i}$ of thickenings of $M$ such that 
\[
Q\cap\psi_{i}\left(F\right)\cap M_{i}\underset{i\to\infty}{\longrightarrow}L\cap M=M.
\]
Since each $Q\cap\psi_{i}\left(F\right)\cap M_{i}$ is an F-miniset
of $F$, $M$ is indeed an F-microset of $F$.
\end{proof}
Using the same arguments as in the proof of Theorem \ref{thm:microset is a union of minisets},
we may obtain the following:
\begin{thm}
\label{thm:Furstenberg microset is a union of minisets}Let $F\subseteq\rd$
be a non-empty compact set. Given an IFS $\Phi=\left\{ \varphi_{i}\right\} _{i\in\Lambda}$
of homotheties which satisfies WSC and a tree $T\in$$\mathbb{\mathscr{T}}_{\Lambda}^{\prime}$
such that $F=\Gamma_{\Phi}\left(T\right)$, there exists some $n\geq1$
which depends only on $\Phi$, such that every $M\in\mathcal{F}_{F}$
satisfies $M=\bigcup\limits _{j=1}^{n}S_{j}$, where $S_{1},\dots,S_{n}$
are F-minisets of some $A_{1},...,A_{n}\in\overline{\mathcal{S}_{\Phi}^{T}}$.
Moreover, if $\Phi$ satisfies the SSC, then $n=1$.
\end{thm}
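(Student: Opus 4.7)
The plan is to combine Lemma \ref{lem:F-microsets are subsets of microsets}, which identifies F-microsets with (subsets of) microsets arising from homothety expansions only, with Theorem \ref{thm:microset is a union of minisets}. Concretely, given $M \in \mathcal{F}_F$, I would first invoke Lemma \ref{lem:F-microsets are subsets of microsets} to produce a microset $L$ of $F$, obtained as a Hausdorff limit of minisets $Q \cap f_i(F)$ with each $f_i$ an expanding homothety, such that $M \subseteq L$.

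Next, I would revisit the proof of Theorem \ref{thm:microset is a union of minisets} applied to this particular $L$. The expanding similarities $\psi_{i,j} = f_i \circ \varphi_{v_{i,j}}$ constructed there are compositions of homotheties, because both the $f_i$ are homotheties and the maps $\varphi_{v_{i,j}}$ are homotheties by assumption on $\Phi$. Consequently their subsequential limits $\psi_1, \ldots, \psi_n$ are also homotheties, and the proof yields sets $A_1, \ldots, A_n \in \overline{\mathcal{S}_\Phi^T}$ with
\[
L \subseteq Q \cap \bigcup_{j=1}^n \psi_j(A_j).
\]
The WSC gives the constant $n$, and under the SSC Lemma \ref{lem:SSC and cube intersecting cylinder sets} forces $n = 1$ exactly as in the original proof.

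For the final decomposition, since $M \subseteq L \subseteq \bigcup_j \psi_j(A_j)$ and $M \subseteq Q$, I would write
\[
M = \bigcup_{j=1}^n \bigl( M \cap Q \cap \psi_j(A_j) \bigr).
\]
Each non-empty term $M \cap Q \cap \psi_j(A_j)$ is a non-empty compact subset of the miniset $Q \cap \psi_j(A_j)$ of $A_j$, and since $\psi_j$ is a homothety this makes it an F-miniset of $A_j$ in the sense of Definition \ref{def:F-microsets}. Note that $M$ itself is non-empty as a Hausdorff limit of non-empty F-minisets, so at least one term is non-empty; repeating any such non-empty term in place of the empty ones preserves the equality and yields exactly $n$ F-minisets $S_1, \ldots, S_n$.

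The main subtlety, and what makes this F-version cleaner than Theorem \ref{thm:microset is a union of minisets}, is that the freedom to take arbitrary compact subsets in the definition of F-minisets lets one replace the sandwich inclusion $Q^\circ \cap \bigcup \psi_j A_j \subseteq M \subseteq Q \cap \bigcup \psi_j A_j$ by an honest equality $M = \bigcup S_j$; no $Q^\circ$ versus $\partial Q$ discrepancy survives. Beyond carefully tracking that homothety compositions remain homotheties in the limiting construction borrowed from the proof of Theorem \ref{thm:microset is a union of minisets}, I do not expect any genuinely new obstacle.
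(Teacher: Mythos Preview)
Your proposal is correct and matches the paper's approach: the paper simply states that the result follows ``using the same arguments as in the proof of Theorem \ref{thm:microset is a union of minisets}'', and your write-up does exactly this, tracking that all the similarities $\psi_{i,j}=f_i\circ\varphi_{v_{i,j}}$ and their limits remain homotheties, and then intersecting $M$ with each $Q\cap\psi_j(A_j)$ to obtain the F-minisets. The only cosmetic difference is that you first invoke Lemma \ref{lem:F-microsets are subsets of microsets} to pass through a genuine microset $L\supseteq M$, whereas the paper's (suppressed) detailed argument works directly with the approximating F-minisets $M_i\subseteq Q\cap f_i(F)$; both routes land on the same containment $M\subseteq Q\cap\bigcup_j\psi_j(A_j)$ and the same decomposition.
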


On the other hand, the following theorem is an immediate consequence
of Theorem \ref{thm:minisets are microsets} together with Lemma \ref{lem:F-microsets are subsets of microsets}.
\begin{thm}
\label{thm:F-minisets are F-microsets}Let $F\subseteq\rd$ be a non-empty
compact set. Given an IFS $\Phi=\left\{ \varphi_{i}\right\} _{i\in\Lambda}$
of homotheties and a tree $T\in$$\mathbb{\mathscr{T}}_{\Lambda}^{\prime}$
such that $F=\Gamma_{\Phi}\left(T\right)$, every F-miniset of a member
of $\overline{\mathcal{S}_{\Phi}^{T}}$ is an F-microset of $F$.%
\end{thm}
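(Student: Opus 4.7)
The plan is to deduce this theorem directly from Theorem \ref{thm:minisets are microsets} together with Lemma \ref{lem:F-microsets are subsets of microsets}, exploiting the fact that $\Phi$ consists of homotheties. First, fix an F-miniset $M$ of some $S \in \overline{\mathcal{S}_\Phi^T}$. By definition of F-miniset, there is an expanding homothety $\psi$ such that $M \subseteq Q \cap \psi(S)$, where the containing set $Q \cap \psi(S)$ is a miniset of $S$ in the sense of Definition \ref{def:microsets}. Applying Theorem \ref{thm:minisets are microsets} to this miniset produces a microset $L \in \mathcal{W}_F \cup \{\text{microsets of } F\}$ of $F$ with $Q \cap \psi(S) \subseteq L$, and in particular $M \subseteq L$.

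The key point I would verify is that $L$ can be realized as a limit of minisets whose defining similarity maps are all homotheties. Since every $\varphi_i \in \Phi$ is a homothety, so is every composition $\varphi_v$ for $v \in \Lambda^*$, and inspecting the proof of Theorem \ref{thm:minisets are microsets} shows that $L$ is obtained as the Hausdorff limit (along an appropriate subsequence) of sets of the form $Q \cap (\psi_n^{-1} \circ \psi \circ \varphi_{v_i}^{-1})(F)$, where $\psi_n$ is defined by the explicit homothety formula in that proof and hence is itself a homothety with scaling factor tending to $1$. The composition of homotheties is a homothety, so each term in the approximating sequence is a miniset of $F$ whose defining map is a homothety, confirming that $L$ is a microset of $F$ obtained using only homotheties.

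Finally, applying Lemma \ref{lem:F-microsets are subsets of microsets} to the inclusion $M \subseteq L$ yields that $M$ is an F-microset of $F$. I do not anticipate any real obstacle here; the only thing that requires a moment's care is the bookkeeping in the previous paragraph, namely tracing through the construction inside the proof of Theorem \ref{thm:minisets are microsets} to check that the orthogonal parts of all maps involved are trivial whenever the IFS is homothetic. Since this is immediate from the explicit formulas, the argument amounts essentially to combining the two cited results.
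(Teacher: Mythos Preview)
Your proposal is correct and follows essentially the same approach as the paper, which states the result as an immediate consequence of Theorem~\ref{thm:minisets are microsets} together with Lemma~\ref{lem:F-microsets are subsets of microsets}. Your additional care in verifying that the approximating maps in the proof of Theorem~\ref{thm:minisets are microsets} remain homotheties (so that Lemma~\ref{lem:F-microsets are subsets of microsets} applies) is exactly the point one needs to check, and the paper's commented-out proof makes the same observation.
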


Theorems \ref{thm:Furstenberg microset is a union of minisets} and
\ref{thm:F-minisets are F-microsets} are suitable versions of Theorems
\ref{thm:microset is a union of minisets} and \ref{thm:minisets are microsets}
for F-microsets. Since the definition of F-microsets allows taking
subsets, Theorems \ref{thm:Furstenberg microset is a union of minisets}
and \ref{thm:F-minisets are F-microsets} take nicer forms than their
parallels. In case the SSC holds, the following corollary is immediate:
\begin{cor}
\label{cor:F-microsets are F-minisets of S, for SSC}Given an IFS
$\Phi=\left\{ \varphi_{i}\right\} _{i\in\Lambda}$ of homotheties,
and a tree $T\in$$\mathbb{\mathscr{T}}_{\Lambda}^{\prime}$, denote
$F=\gamma_{\Phi}\left(\partial T\right)$. Assume that $\Phi$ satisfies
the SSC, then 
\[
\mathcal{F}_{F}=\left\{ M:\,M\text{ is an F-miniset of \ensuremath{S}},\,S\in\overline{\mathcal{S}_{\Phi}^{T}}\right\} .
\]
\end{cor}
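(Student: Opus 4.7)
The corollary is an immediate consequence of the two directions already established. My plan is to simply combine the two parallel theorems for F-minisets and F-microsets, noting that under the SSC both directions become tight.

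For the inclusion $\supseteq$, I will invoke Theorem \ref{thm:F-minisets are F-microsets} directly: it asserts (without needing any separation hypothesis beyond what is already implicit) that every F-miniset of a member of $\overline{\mathcal{S}_\Phi^T}$ is an F-microset of $F$, i.e.\ belongs to $\mathcal{F}_F$. Thus this containment holds for free.

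For the inclusion $\subseteq$, I will use the ``moreover'' part of Theorem \ref{thm:Furstenberg microset is a union of minisets}. That theorem writes every $M \in \mathcal{F}_F$ as a finite union $\bigcup_{j=1}^n S_j$, where each $S_j$ is an F-miniset of some $A_j \in \overline{\mathcal{S}_\Phi^T}$, and under the SSC one can take $n=1$. Therefore $M = S_1$ is itself an F-miniset of a single $A_1 \in \overline{\mathcal{S}_\Phi^T}$, which is exactly what the right-hand side of the desired equality requires.

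There is no real obstacle here; the corollary is essentially a packaging of the earlier theorems, with the SSC doing its work inside Theorem \ref{thm:Furstenberg microset is a union of minisets} (specifically via Lemma \ref{lem:SSC and cube intersecting cylinder sets}, which ensures that for sufficiently small cubes only one cylinder of the appropriate section is hit, forcing $n=1$). The only thing to double-check is that the homothety hypothesis on $\Phi$ is consistently used in both directions, so that all the similarity maps arising in the proofs of Theorems \ref{thm:Furstenberg microset is a union of minisets} and \ref{thm:F-minisets are F-microsets} remain homotheties and the resulting sets are genuinely F-minisets (not just minisets) in the sense of Definition \ref{def:F-microsets}. This is immediate since the composition of homotheties is a homothety, so no further argument is needed.
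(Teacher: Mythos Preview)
Your proposal is correct and follows exactly the approach the paper intends: the corollary is stated immediately after Theorems \ref{thm:Furstenberg microset is a union of minisets} and \ref{thm:F-minisets are F-microsets} as an immediate consequence, and you have spelled out precisely how each inclusion comes from one of those theorems (with the SSC yielding $n=1$ in the first). Your remark about homotheties being closed under composition, and the implicit use of the implication SSC $\Rightarrow$ WSC (Lemma \ref{lem:Separation conditions hierarchy}) so that Theorem \ref{thm:Furstenberg microset is a union of minisets} applies, are the only points worth noting, and you have covered them.
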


The following definition is from \cite{Furstenberg2008405}.
\begin{defn}
A set $F$ is called \emph{homogeneous} if every F-microset of $F$
is an F-miniset of $F$.
\end{defn}

In \cite{Furstenberg2008405}, Furstenberg showed that every linear
map restricted to a homogeneous set has a property called dimension
conservation, whose definition will now be recalled.
\begin{defn}
A linear map $f:\mathbb{R}^{d}\to\mathbb{R}^{l}$ is \emph{dimension
conserving} on a compact set $A\subseteq\rd$ if there exists some
$\delta\geq0$, such that 
\[
\delta+\dimh\left\{ y\in\mathbb{R}^{l}:\,\dimh\left(f^{-1}\left(y\right)\cap A\right)\geq\delta\right\} \geq\dimh\left(A\right),
\]
with the convention that $\dimh\left(\emptyset\right)=-\infty$.
\end{defn}

Roughly speaking, $f:\rd\to\mathbb{R}^{l}$ is dimension conserving
on $A$ if any decrease of the Hausdorff dimension between $A$ and
$f\left(A\right)$ is compensated by the Hausdorff dimension of the
fibers $f^{-1}\left(y\right)$. This may be seen as a generalization
of the property described by the rank-nullity theorem for linear maps
between two vector spaces.

Now, assume that $F$ is the attractor of an IFS $\Phi=\left\{ \varphi_{i}\right\} _{i\in\Lambda}$
of homotheties, so that $T=\Lambda^{*}$ is a coding tree for $F$,
and $\overline{\mathcal{S}_{\Phi}^{T}}=\left\{ F\right\} $. If $\Phi$
satisfies the SSC, then by Corollary \ref{cor:F-microsets are F-minisets of S, for SSC},
$F$ is homogeneous. This is already well known. In \cite{KaenmakiRossi2016}
it was shown using a result from \cite{Fraser2015assouad} that for
self-homothetic sets in $\mathbb{R}$ with Hausdorff dimension $<1$,
homogeneity implies the WSC. However, it was shown that the WSC does
not characterize homogeneity, even for this class of sets, and there
exist such sets which satisfy the OSC, but are not homogeneous. In
that paper it is mentioned that a characterization for homogeneity
is not yet known. 

However, assuming that $\Phi$ satisfies the WSC, Theorem \ref{thm:Furstenberg microset is a union of minisets}
implies that every F-microset of $F$ is a union of some (bounded)
finite number of F-minisets of $F$. One may consider this property
as a weaker form of homogeneity.
\begin{defn}
\label{def:weak homogeneity}A set $F$ is called \emph{weakly homogeneous}
if every F-microset of $F$ is a union of finitely many F-minisets
of $F$.
\end{defn}

An immediate consequence of Theorem \ref{thm:Furstenberg microset is a union of minisets}
is the following:
\begin{prop}
\label{prop:WSC implies weak homogeneous}Let $F\subseteq\mathbb{R}^{d}$
be the attractor of an IFS of homotheties which satisfies the WSC.
Then $F$ is weakly homogeneous.
\end{prop}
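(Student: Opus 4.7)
The plan is to specialize Theorem \ref{thm:Furstenberg microset is a union of minisets} to the canonical coding tree of the attractor. Since $F$ is the attractor of the homothety IFS $\Phi=\left\{\varphi_{i}\right\}_{i\in\Lambda}$, the full tree $T=\Lambda^{*}$ lies in $\mathscr{T}_{\Lambda}^{\prime}$ and satisfies $\Gamma_{\Phi}(T)=F$. I would then observe that for every vertex $v\in\Lambda^{*}$ the descendants tree $T^{v}$ is again the full tree $\Lambda^{*}$, so every element of $\mathcal{S}_{\Phi}^{T}$ equals $F$; passing to the closure in the Hausdorff metric still gives $\overline{\mathcal{S}_{\Phi}^{T}}=\{F\}$.

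With this identification in hand, Theorem \ref{thm:Furstenberg microset is a union of minisets} supplies a constant $n\geq 1$ depending only on $\Phi$ such that every F-microset $M$ of $F$ admits a decomposition $M=\bigcup_{j=1}^{n}S_{j}$, where each $S_{j}$ is an F-miniset of some member of $\overline{\mathcal{S}_{\Phi}^{T}}$. Since that collection consists of the single set $F$, each $S_{j}$ is automatically an F-miniset of $F$ itself, which is precisely the weak homogeneity condition of Definition \ref{def:weak homogeneity}.

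There is no real obstacle in this argument: the entire technical content, namely the uniform control of the number of cylinder sets of a fixed scale that meet a small cube via the WSC constant, has already been absorbed into Theorem \ref{thm:Furstenberg microset is a union of minisets}. The only thing worth flagging is that the bound $n$ in the decomposition is uniform in $M$, depending only on $\Phi$, so the conclusion is in fact slightly stronger than the bare statement of Definition \ref{def:weak homogeneity}: the number of F-minisets required is bounded above by a constant that can be read off from the WSC constant of $\Phi$.
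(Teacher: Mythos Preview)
Your argument is correct and matches the paper's approach exactly: the paper states the proposition as an immediate consequence of Theorem \ref{thm:Furstenberg microset is a union of minisets}, having already noted just before the definition of weak homogeneity that taking $T=\Lambda^{*}$ gives $\overline{\mathcal{S}_{\Phi}^{T}}=\{F\}$. Your observation that the bound $n$ is uniform in $M$ is also made in the paper, in the remark following Theorem \ref{thm:weak homogeneity implies dimension conservation}.
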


Now, using the same arguments as in \cite{KaenmakiRossi2016} as well
as Proposition \ref{prop:WSC implies weak homogeneous}, we obtain
the following equivalence:
\begin{thm}
\label{thm:weak homogeneity iff WSC}Let $F\subseteq\mathbb{R}$ be
the attractor of $\Phi$ - an IFS of homotheties. Assume that $\dimh\left(F\right)<1$
and that $F$ is not a singleton. Then $F$ is weakly homogeneous
$\iff$ $\Phi$ satisfies WSC.
\end{thm}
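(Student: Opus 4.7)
The plan is to follow the strategy of \cite{KaenmakiRossi2016}, where full homogeneity was shown to imply WSC, but adapted to weak homogeneity. The backward direction has already been established as Proposition \ref{prop:WSC implies weak homogeneous}, so only the forward implication requires proof. The key external input I will use is a result from \cite{Fraser2015assouad}: for the attractor $F \subseteq \mathbb{R}$ of a self-similar IFS, $\dima(F) < 1$ forces WSC. Hence it suffices to show that weak homogeneity of $F$ implies $\dima(F) \leq \dimh(F)$.

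First I would note that weak homogeneity delivers the bound $\dimh(M) \leq \dimh(F)$ for every F-microset $M$ of $F$. Indeed, by Definition \ref{def:weak homogeneity}, $M = \bigcup_{j=1}^{n} S_j$, where each $S_j$ is an F-miniset of $F$, i.e., a subset of $Q \cap \psi_j(F)$ for some homothety $\psi_j$. Since Hausdorff dimension is invariant under similarities and monotone under inclusion, each $\dimh(S_j) \leq \dimh(F)$, and a finite union takes the maximum.

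Next, I would transfer this bound from F-microsets to the microsets of Definition \ref{def:microsets}, which is what Theorem \ref{thm:Assouad as max} references. Given any $M \in \mathcal{W}_F$, write $M = \lim_i Q \cap f_i(F)$ and pass to a subsequence on which each $f_i$ has a fixed orientation. If the orientation is preserving, then each $f_i$ is an expanding homothety, so each miniset $Q \cap f_i(F)$ is an F-miniset and $M$ is an F-microset. If the orientation is reversing, the reflection $\sigma(x) = 1 - x$ fixes $Q$, hence $\sigma(M) = \lim_i Q \cap (\sigma \circ f_i)(F)$ is a microset defined by homotheties, hence an F-microset, and $\dimh(\sigma(M)) = \dimh(M)$. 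In either case $\dimh(M) \leq \dimh(F)$.

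Applying Theorem \ref{thm:Assouad as max}(1) then gives $\dima(F) = \max\{\dimh(A) : A \in \mathcal{W}_F\} \leq \dimh(F) < 1$, and the result of \cite{Fraser2015assouad} closes the argument. The main obstacle, really the only substantive point, is the invocation of the Assouad-versus-WSC dichotomy from \cite{Fraser2015assouad}; the remaining steps are essentially bookkeeping around the two definitions of microsets and the observation that in $\mathbb{R}$ the orthogonal group is exhausted by a single reflection that preserves $Q$.
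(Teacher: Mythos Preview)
Your proof is correct and follows essentially the same route as the paper: both show that weak homogeneity forces $\dima(F)\le\dimh(F)<1$ and then invoke \cite{Fraser2015assouad}. The only difference is that the paper cites \cite[Propositions~5.7,~5.8]{Kaenmaki2018rigidity} directly to obtain an F-microset realising $\dima(F)$, whereas you use Theorem~\ref{thm:Assouad as max} (stated for general similarity microsets) and add the orientation-reflection argument to pass to F-microsets; this extra bookkeeping is sound and the approaches are otherwise identical.
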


\begin{proof}
The direction $\left(\impliedby\right)$ follows from Proposition
\ref{prop:WSC implies weak homogeneous}. For the direction $\left(\implies\right)$,
assume that $F$ is weakly homogeneous. By \cite[Propositions 5.7, 5.8]{Kaenmaki2018rigidity},
there is an F-microset $M$ of $F$ s.t. $\dimh\left(M\right)=\dima\left(F\right)$.
By weak homogeneity $\dimh\left(M\right)\leq\dimh\left(F\right)$
which implies that $\dima\left(F\right)<1$. By \cite[Theorem 3.1]{Fraser2015assouad},
this implies that $\Phi$ satisfies WSC.
\end{proof}
\begin{rem}
Some remarks on the Theorem:
\begin{enumerate}
\item For a self-similar set $F\subseteq\mathbb{R}$ which is not a singleton
and with $\dimh\left(F\right)<1$, the WSC of its defining IFS is
a property of the set $F$ itself. That is, if $F$ is the attractor
of some similarity IFS which satisfies the WSC, then every similarity
IFS whose attractor is $F$ satisfies the WSC as well. This follows
from \cite[Theorem 1.3]{Fraser2015assouad}.
\item Using the example constructed in \cite{KaenmakiRossi2016} of a self-homothetic
set which is the attractor of an IFS that satisfies the open set condition,
but fails to be homogeneous, it follows that weak homogeneity does
not imply homogeneity, and so these two properties are not equivalent,
and homogeneity is indeed a stronger property.
\end{enumerate}
\end{rem}

The weak homogeneity property also makes sense in the context of Furstenberg's
work since as we now show, weak homogeneity implies the same conclusion
regarding dimension conservation as homogeneity.
\begin{thm}
\label{thm:weak homogeneity implies dimension conservation}If a compact
set $A\subseteq\rd$ is weakly homogeneous, then every linear map
$f:\rd\to\mathbb{R}^{l}$ is dimension conserving on $A$.
\end{thm}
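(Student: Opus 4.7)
The plan is to adapt Furstenberg's CP-process argument from \cite{Furstenberg2008405}, where dimension conservation on a homogeneous set $A$ is obtained by constructing a stationary probability measure on the space $\mathcal{F}_A$ of F-microsets and applying a Marstrand-type slicing theorem. Homogeneity is invoked there solely to ensure that the magnification dynamics, i.e., zooming into a scene around a typical point, maps $\mathcal{F}_A$ to itself. I would verify that weak homogeneity yields the analogous invariance on the slightly enlarged space
\[
\mathcal{V}_A \;:=\; \{\text{non-empty finite unions of F-minisets of } A\},
\]
and then run Furstenberg's ergodic argument inside $\mathcal{V}_A$.

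The first step is to check the invariance of $\mathcal{V}_A$ under the magnification operator. Given $M = \bigcup_{j=1}^{k} S_j \in \mathcal{V}_A$ with $S_j \subseteq Q \cap \phi_j(A)$ for expanding homotheties $\phi_j$, and an F-miniset $M'$ of $M$ determined by an expanding homothety $\psi$, one has
\[
M' \;\subseteq\; Q \cap \psi(M) \;=\; \bigcup_{j=1}^{k} Q \cap \psi(S_j) \;\subseteq\; \bigcup_{j=1}^{k} Q \cap (\psi\circ\phi_j)(A),
\]
and each $\psi \circ \phi_j$ is again an expanding homothety, so $M' \cap \psi(S_j)$ is an F-miniset of $A$ (or empty), yielding $M' \in \mathcal{V}_A$. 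By Definition \ref{def:weak homogeneity}, $\mathcal{F}_A \subseteq \mathcal{V}_A$, so the magnification dynamics, starting from $A$ itself viewed as the trivial F-miniset of $A$ via the identity (after rescaling $A$ into $Q$ if necessary), remains in $\mathcal{V}_A$ for all time. The componentwise description also shows that the number of pieces does not increase along an orbit, so the reachable scenes lie in a Hausdorff-compact subfamily of bounded combinatorial complexity.

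The second step is to transfer Furstenberg's CP-process construction and its ergodic decomposition to this invariant subfamily, thereby obtaining the dimension conservation inequality for any linear $f\colon\mathbb{R}^d\to\mathbb{R}^l$ restricted to $A$. The dimension arithmetic transfers verbatim: for $M = \bigcup_j S_j \in \mathcal{V}_A$ one has $\dimh(M) = \max_j \dimh(S_j) \le \dimh(A)$, and each fiber $f^{-1}(y)\cap M$ decomposes componentwise as $\bigcup_j f^{-1}(y) \cap S_j$, so its dimension is the maximum of the fiber dimensions of the pieces, each of which inherits its slicing behavior from $A$ via the defining homothety.

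The main obstacle is ensuring that Furstenberg's measurable and topological setup, namely compactness of the space of scenes, continuity of the magnification map, existence of invariant probability measures, and an ergodic decomposition compatible with the fibers of $f$, survives the passage from single F-minisets to finite unions. These points should go through given the uniform bound on the number of components along each orbit, but this verification is the technical heart of the argument. No essentially new idea beyond those in \cite{Furstenberg2008405} seems to be required; what is needed is careful bookkeeping of the component indexing across iterations of the CP-process, together with the observation that the dimension-conserving identity is stable under taking maxima over finitely many components.
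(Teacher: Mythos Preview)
Your approach is not wrong in spirit, but it is far more laborious than necessary, and the paper takes a much shorter route. Rather than re-entering Furstenberg's CP-process machinery on the enlarged space $\mathcal{V}_A$, the paper simply cites \cite[Theorem 6.1]{Furstenberg2008405} as a black box: this already produces an F-microset $M$ of $A$ with $\dimh(M)\geq\dimh(A)$ on which $f$ is dimension conserving. Weak homogeneity is then invoked exactly once, to write $M=\bigcup_{i=1}^k M_i$ with each $M_i$ an F-miniset of $A$. A one-line pigeonhole finishes the argument: if $\delta$ witnesses dimension conservation on $M$ and $L_i=\{y:\dimh(f^{-1}(y)\cap M_i)\geq\delta\}$, then $\bigcup_i L_i=\{y:\dimh(f^{-1}(y)\cap M)\geq\delta\}$, so some $L_{i_0}$ has full Hausdorff dimension; since $M_{i_0}\subseteq\psi(A)$ for a homothety $\psi$, the inequality $\delta+\dimh(L_{i_0})\geq\dimh(A)=\dimh(\psi(A))$ says $f$ is dimension conserving on $\psi(A)$, hence on $A$.

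The technical verifications you flag as the ``main obstacle'' (compactness of the scene space, continuity of magnification, existence and ergodic decomposition of invariant measures on $\mathcal{V}_A$) are thus entirely avoided: none of Furstenberg's construction needs to be redone, because his theorem already hands you the good microset $M$. Your final paragraph actually contains the essential transfer idea (componentwise decomposition of fibers and taking a maximum), but it is buried inside a much larger program that is not needed. The upshot is that weak homogeneity enters only \emph{after} Furstenberg's ergodic argument, not during it.
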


\begin{proof}
By \cite[Theorem 6.1]{Furstenberg2008405} $A$ has an F-microset
$M$ such that $\dimh\left(M\right)\geq$ $\dimh\left(A\right)$,
and $f$ is dimension conserving on $M$. Assuming that $A$ is weakly
homogeneous, $M=\bigcup\limits _{i=1}^{k}M_{i}$, where each $M_{i}$
is an F-miniset of $A$. 

$f$ being dimension conserving on $M$, means that for some $\delta\geq0$,
\[
\delta+\dimh\left\{ y:\,\dimh\left(f^{-1}\left(y\right)\cap M\right)\geq\delta\right\} \geq\dimh\left(M\right)\geq\dimh\left(A\right).
\]
Denote $L=\left\{ y:\,\dimh\left(f^{-1}\left(y\right)\cap M\right)\geq\delta\right\} $,
and for every $i$, denote $L_{i}=\left\{ y:\,\dimh\left(f^{-1}\left(y\right)\cap M_{i}\right)\geq\delta\right\} $.
Note that $L=\bigcup\limits _{i=1}^{k}L_{i}$, and therefore there
exists some $i_{0}\in\left\{ 1,...,k\right\} $ such that $\dimh\left(L_{i_{0}}\right)=\dimh\left(L\right)$.
Hence, we obtain 
\[
\delta+\dimh\left\{ y:\,\dimh\left(f^{-1}\left(y\right)\right)\cap M_{i_{0}}\geq\delta\right\} \geq\dimh\left(A\right).
\]

Now, $M_{i_{0}}$ is a miniset of $A$, so $M_{i_{0}}\subseteq\psi\left(A\right)$%
{} for some homothety $\psi:\rd\to\rd$. Therefore, we also have
\[
\delta+\dimh\left\{ y:\,\dimh\left(f^{-1}\left(y\right)\right)\cap\psi\left(A\right)\geq\delta\right\} \geq\dimh\left(\psi\left(A\right)\right).
\]

In words, this means that $f$ is dimension conserving on $\psi\left(A\right)$,
which is equivalent to $f$ being dimension conserving on $A$.
\end{proof}
\begin{cor}
Let $F\subseteq\rd$ be the attractor of an IFS of homotheties which
satisfies the WSC, then every linear map $f:\rd\to\mathbb{R}^{l}$
is dimension conserving on $F$.
\end{cor}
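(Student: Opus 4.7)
The plan is to derive this as an immediate two-step consequence of the machinery developed just above. First I would invoke Proposition \ref{prop:WSC implies weak homogeneous}, which tells us directly that any attractor $F$ of an IFS of homotheties satisfying the WSC is weakly homogeneous in the sense of Definition \ref{def:weak homogeneity}. This is the step that requires the WSC hypothesis, and it is really just the special case of Theorem \ref{thm:Furstenberg microset is a union of minisets} obtained by taking $T=\Lambda^{*}$ as the coding tree, for which $\overline{\mathcal{S}_{\Phi}^{T}}=\{F\}$, so that every F-miniset of a member of $\overline{\mathcal{S}_{\Phi}^{T}}$ is simply an F-miniset of $F$ itself.

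Next I would plug this weak homogeneity into Theorem \ref{thm:weak homogeneity implies dimension conservation}, which asserts that for any weakly homogeneous compact set $A\subseteq\rd$ every linear map $f:\rd\to\mathbb{R}^{l}$ is dimension conserving on $A$. Applied to $A=F$, this yields exactly the claim of the corollary.

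There is no substantive obstacle here; the corollary is meant as a clean packaging of the chain Proposition \ref{prop:WSC implies weak homogeneous} $\Rightarrow$ weak homogeneity $\Rightarrow$ dimension conservation (Theorem \ref{thm:weak homogeneity implies dimension conservation}). The only thing worth being explicit about in the write-up is that since $F$ is self-homothetic and the underlying IFS satisfies the WSC, the coding tree $T=\Lambda^{*}$ belongs to $\mathscr{T}_{\Lambda}^{\prime}$ and has $\Gamma_{\Phi}(T)=F$, so that Proposition \ref{prop:WSC implies weak homogeneous} applies verbatim. Accordingly, the whole proof reduces to a single sentence chaining the two previous results.
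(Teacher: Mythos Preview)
Your proposal is correct and matches the paper's intent exactly: the corollary is stated without proof in the paper, as it follows immediately by chaining Proposition \ref{prop:WSC implies weak homogeneous} with Theorem \ref{thm:weak homogeneity implies dimension conservation}.
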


\begin{rem}
The weak homogeneity property could be given two alternative definitions.
One is weaker than Definition \ref{def:weak homogeneity}, requiring
that every F-microset is the union of at most countably many F-minisets.
The other is stronger than Definition \ref{def:weak homogeneity},
requiring that there exists some $c\in\mathbb{N}$ s.t. every F-microset
is the union of at most $c$ F-minisets.

In both cases, Theorem \ref{thm:weak homogeneity implies dimension conservation},
Proposition \ref{prop:WSC implies weak homogeneous} and Theorem \ref{thm:weak homogeneity iff WSC}
would still hold. Also, note that the latter implies that for self-homothetic
subsets of $\mathbb{R}$ with Hausdorff dimension <1, all these alternative
definitions are in fact equivalent.
\end{rem}

Finally, we consider the application of Theorem \ref{thm:Furstenberg microset is a union of minisets}
to Galton-Watson fractals. This yields the following probabilistic
version of the implication $\left(\impliedby\right)$ in Theorem \ref{thm:weak homogeneity iff WSC}.
\begin{thm}
\label{thm:weak homogeneity GWF}Let $E$ be a GWF, constructed with
respect to an IFS $\Phi$ of homotheties which satisfies the WSC,
then a.s. there exists some $n\in\mathbb{N}$ which depends only on
$\Phi$, such that every $M\in\mathcal{F}_{E}$ satisfies $M=\bigcup\limits _{j=1}^{n}S_{j}$,
where $S_{1},\dots,S_{n}$ are $F\text{-minisets}$ of some $A_{1},...,A_{n}\in\supp\left(E\right)$.
Moreover, if $\Phi$ satisfies the SSC, then $n=1$.
\end{thm}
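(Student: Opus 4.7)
The plan is to combine the deterministic Theorem \ref{thm:Furstenberg microset is a union of minisets} with the almost-sure identification of $\overline{\mathcal{S}_\Phi^{T^\prime}}$ and $\supp(E)$ from Theorem \ref{thm:supp of GWF =00003D limit branch sets a.s.}. Since Theorem \ref{thm:Furstenberg microset is a union of minisets} is already phrased in terms of $\overline{\mathcal{S}_\Phi^{T}}$ for an arbitrary coding tree $T\in\mathscr{T}_{\Lambda}^{\prime}$, essentially no new work is required beyond plugging in the random coding tree.

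First, let $T$ be the Galton-Watson tree associated with $E$, and let $T^\prime\in\mathscr{T}_{\Lambda}^{\prime}$ be its reduced tree. By the definition of $E$ (as $\Gamma_\Phi(T)$ conditioned on non-extinction) together with the fact that $\partial T=\partial T^\prime$, we have $E=\Gamma_\Phi(T^\prime)$ almost surely. Thus $T^\prime$ is an honest coding tree for $E$ to which Theorem \ref{thm:Furstenberg microset is a union of minisets} applies: there exists $n\geq 1$, depending only on $\Phi$ (through the WSC constant), such that every $M\in\mathcal{F}_E$ is a union of $n$ F-minisets of elements of $\overline{\mathcal{S}_\Phi^{T^\prime}}$.

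Next, Theorem \ref{thm:supp of GWF =00003D limit branch sets a.s.} asserts that, almost surely conditioned on non-extinction, $\overline{\mathcal{S}_\Phi^{T^\prime}}=\supp(E)$. On this full-probability event, the sets $A_1,\dots,A_n$ produced by Theorem \ref{thm:Furstenberg microset is a union of minisets} automatically lie in $\supp(E)$, which is precisely the conclusion we want. The SSC refinement is immediate from the corresponding ``moreover'' clause in Theorem \ref{thm:Furstenberg microset is a union of minisets}, which forces $n=1$ in that setting.

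The only mild subtlety is bookkeeping: Theorem \ref{thm:Furstenberg microset is a union of minisets} delivers a value of $n$ for each deterministic coding tree, and one must verify that this $n$ depends only on $\Phi$ and not on the particular realization of $T^\prime$. But in the statement of that theorem $n$ is given by the WSC constant of $\Phi$, which is deterministic, so no uniformity issue arises. With this remark, the result follows by intersecting the almost-sure event from Theorem \ref{thm:supp of GWF =00003D limit branch sets a.s.} with the non-extinction event and applying Theorem \ref{thm:Furstenberg microset is a union of minisets} realization-wise.
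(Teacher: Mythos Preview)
Your proposal is correct and follows exactly the approach the paper takes: the theorem is stated there as a direct application of Theorem~\ref{thm:Furstenberg microset is a union of minisets} to Galton-Watson fractals, with Theorem~\ref{thm:supp of GWF =00003D limit branch sets a.s.} supplying the almost-sure identification $\overline{\mathcal{S}_\Phi^{T'}}=\supp(E)$. Your remark that $n$ depends only on the WSC constant of $\Phi$ (and not on the realization of $T'$) is the right observation to make the argument complete.
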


The property that every F-microset of $E$ is a finite union of F-minisets
of elements of $\supp\left(E\right)$, may be seen as a generalization
of the weak homogeneity property to random fractals. Also note that
when $\Phi$ satisfies the SSC, we obtain the assertion that every
F-microset of $E$ is an F-miniset of some set in $\supp\left(E\right)$
which, again, may be considered as a generalization of the homogeneity
property to random fractals.

 \hypersetup{bookmarksdepth=-1}

\specialsection*{\textbf{Acknowledgments}}

The author wishes to thank Jonathan Fraser for some helpful tips,
and especially for the reference to his great book \cite{Fraser2020}.
The author is also thankful to Barak Weiss for some helpful comments
on the paper.\hypersetup{bookmarksdepth=2}

\bibliographystyle{abbrv}
\bibliography{all}

\end{document}